\documentclass[a4paper,english]{smfart}

\usepackage[T1]{fontenc} 
\usepackage[utf8]{inputenc}
\usepackage{babel} 
\usepackage{newpxtext} %
\usepackage{eulervm} %

\usepackage{amssymb,textcomp,mathrsfs,braket,commath,relsize,stmaryrd,textcase}
	\expandafter\def\csname opt@stmaryrd.sty\endcsname  %
	{only,shortleftarrow,shortrightarrow}

\usepackage{bm} %
	\SetSymbolFont{stmry}{bold}{U}{stmry}{m}{n} %

\usepackage{etoolbox} 
\newcommand{\addQEDstyle}[2]{\AtBeginEnvironment{#1}{\pushQED{\qed}\renewcommand{\qedsymbol}{#2}}
	\AtEndEnvironment{#1}{\popQED}} %
	\addQEDstyle{rema}{$\diamondsuit$}
	\addQEDstyle{rema*}{$\diamondsuit$} %
	\addQEDstyle{exem}{$\diamondsuit$}
	\addQEDstyle{exem*}{$\diamondsuit$} %

\usepackage{tikz}
	\usetikzlibrary{cd} %

\usepackage{float,extpfeil,accents,enumitem,longtable}
\interfootnotelinepenalty=10000 %
\AtBeginDocument{\def\MR#1{}} %
\apptocmd{\sloppy}{\hbadness 10000\relax}{}{} %
\usepackage{mathtools}
    \mathtoolsset{showonlyrefs} %
\usepackage{smfthm} 
    \NumberTheoremsAs{subsubsection}\SwapTheoremNumbers

\usepackage{mymacros}

\usepackage[colorlinks,psdextra,pdfencoding=auto,backref=page]{hyperref}
	\hypersetup{colorlinks,linkcolor={red!50!black},filecolor={green!50!black},urlcolor={blue!80!black},citecolor={blue!50!black}}

\begin{document}
\frontmatter

\title[Wild genus-0 \MakeLowercase{q}-de Rham spaces]{Wild genus-zero quantum de Rham spaces}

\author[M.~Chaffe]{Matthew Chaffe\thanks{In the course of this work,
		M.~C. was funded by the project `Next Generation EU',
		under the National Recovery and Resilience Plan (NRRP),
		Mission 4,
		Component 2,
		Investment 1.1,
		Call PRIN 2022 No.~104 of Feb.~2,
		2022,
		of the Italian Ministry of University and Research;
		Project 2022S8SSW2 (subject area: PE -- Physical Sciences and Engineering),
		`Algebraic and geometric aspects of Lie theory'}}
\address[M.~Chaffe]{Department of Mathematics,
	University of Manchester,
	M13 9PL,
	UK}
\email{matthew.chaffe@manchester.ac.uk}

\author[G. Rembado]{Gabriele Rembado\thanks{G.~R. is supported by the European Commission,
		under the grant agreement n.~101108575 (HORIZON-MSCA project~\href{https://cordis.europa.eu/project/id/101108575}{QuantMod});
		more recently,
		also by the \emph{Ministero de Ciencia},
		under the \emph{Innovación y Universidades}' grant PID2024-155686NB-I00.}
}
\address[G.~Rembado]{Institut Montpelliérain Alexander Grothendieck,
	University of Montpellier,
	Place Eugène Bataillon,
	34090 Montpellier,
	France}
\email{gabriele.rembado@umontpellier.fr}

\author[D.~Yamakawa]{Daisuke Yamakawa\thanks{D.~Y. is supported by JSPS KAKENHI Grant Number 24K06695.}
}
\address[D.~Yamakawa]{Department of Mathematics,
	Faculty of Science Division I,
	Tokyo University of Science,
	1-3 Kagurazaka,
	Shinjuku-ku,
	Tokyo 162-8601,
	Japan}
\email{yamakawa@rs.tus.ac.jp}

\begin{abstract}
	The wild de Rham spaces parameterize isomorphism classes of (stable) meromorphic connections,
	defined on principal bundles over wild Riemann surfaces.
	Working on the Riemann sphere,
	we will deformation-quantize the standard open part of de Rham spaces,
	which corresponds to the moduli of linear ordinary differential equations with meromorphic coefficients.
	We treat the general untwisted/unramified case with nonresonant semisimple formal residue,
	for any polar divisor and reductive structure group.

	The main ingredients are:
	(i) constructing the quantum Hamiltonian reduction of a (tensor) product of quantized coadjoint orbits in dual truncated-current Lie algebras,
	involving the corresponding category-$\mc O$ Verma modules;
	and (ii) establishing sufficient conditions on the coadjoint orbits,
	so that generically all meromorphic connections are stable,
	and the (semiclassical) moment map for the gauge-group action is faithfully flat.
\end{abstract}

{\let\newpage\relax\maketitle} %

\setcounter{tocdepth}{1}  %
\tableofcontents

\mainmatter

\section{Introduction,
  main results,
  layout}
\label{sec:intro}

\subsection{Main aim}

This is a text on the quantization of complex symplectic varieties arising in meromorphic 2d gauge theory,
parameterizing isomorphism classes of irregular-singular connections defined on principal bundles over Riemann surfaces:
the wild \emph{de Rham spaces} $\mc M_{\dR}$.

\subsubsection{}

In brief,
the genus-zero examples contain open subspaces $\mc M^*_{\dR} \sse \mc M_{\dR}$,
whose points naively correspond to linear meromorphic ODEs for a holomorphic function $z \mt \psi(z) \in \mb C^m$,
up to a global base-change in the target.
Here we replace $\GL_m(\mb C)$ by any connected complex reductive group $G$,
and then build on~\cite{boalch_2001_symplectic_manifolds_and_isomonodromic_deformations,
	boalch_2007_quasi_hamiltonian_geometry_of_meromorphic_connections,
	calaque_naef_2015_a_trace_formula_for_the_quantization_coadjoint_orbits,
	yamakawa_2019_fundamental_two_forms_for_isomonodromic_deformations,chaffe_2023_category_o_for_takiff_lie_algebras,chaffe_topley_2023_category_o_for_truncated_current_lie_algebras,felder_rembado_2023_singular_modules_for_affine_lie_algebras_and_applications_to_irregular_wznw_conformal_blocks,calaque_felder_rembado_wentworth_2024_wild_orbits_and_generalised_singularity_modules_stratifications_and_quantisation} in order to:
\begin{enumerate}
	\item
	      recall that $\mc M^*_{\dR}$ is naturally identified with a Hamiltonian reduction of products of coadjoint orbits $\mc O'_a$ in dual \emph{truncated-current Lie algebras} (= TCLAs);\fn{
		      A.k.a.~\emph{Takiff} Lie algebras~\cite{takiff_1971_rings_of_invariant_polynomials_for_a_class_of_lie_algebras},
		      cf.~\cite{geoffriau_1994_sur_le_centre_de_l_algebre_enveloppante_d_une_algebre_de_takiff,geoffriau_1995_homomorphismo_de_harish_chandra_pour_les_algebres_de_takiff_generalisees}.}

	\item
	      construct a strong \emph{quantum} comoment map for the quantization of $\mc O'_a$;

	\item
	      study the flatness of the `semiclassical' moment map defining $\mc M^*_{\dR}$;

	\item
	      and construct a deformation quantization of $\mc M_{\dR}^*$ via \emph{quantum} Hamiltonian reductions of products of quantized coadjoint orbits.
\end{enumerate}

The rest of the introduction gathers more background/motivation (cf.~\S~\ref{sec:background}),
before moving on to a statement of the main results (cf.~\S~\ref{sec:results}),
and to a layout of the article (cf.~\S~\ref{sec:layout}).

\subsection{More background/motivation}
\label{sec:background}

Let $\Sigma$ be a closed Riemann surface,
and mark a finite set $\bm a \sse \Sigma$ of unordered points.
Let also $\mf g \ceqq \Lie(G)$ be the Lie algebra of $G$.

\subsubsection{}

A \emph{meromorphic connection on} $(\Sigma,\bm a)$ is a pair $(\bm\pi,\bm{\mc A})$,
consisting of:
(i) a holomorphic principal $G$-bundle $\bm\pi \cl \bm{\mc E} \to \Sigma$;
and (ii) a $\mf g$-valued meromorphic $1$-form $\bm{\mc A}$ on the total space $\bm{\mc E}$,
with poles along the divisor $\bm{\mc E}_{\bm a} \ceqq \bm\pi^{-1}(\bm a) \sse \bm{\mc E}$,
such that:
\begin{enumerate}
	\item
	      $R^*_g (\bm{\mc A}) = \Ad_{g^{-1}} (\bm{\mc A})$,
	      where $R_g$ is the structural right action of $g \in G$ on $\bm{\mc E}$;

	\item
	      and $\braket{ \bm{\mc A},X^\sharp } = X$,
	      where $X^\sharp$ is the fundamental vector field of $X \in \mf g$ on $\bm{\mc E}$.
\end{enumerate}

For many different reasons,
in complex Poisson/symplectic geometry,
representation theory,
and low-dimensional topology,
one is interested in parameterizing isomorphism classes of meromorphic connections.
Our main motivation involves the actions of generalized braid/mapping class groups,
both before and after quantization,
cf.~\S\S~\ref{sec:wild_riemann_surfaces}--\ref{sec:quantum_actions}.
(This relates with the mathematical formalization of conformal/topological field theories,
cf.~Rmk.~\ref{rmk:hitchin} and~\cite{witten_2007_gauge_theory_and_wild_ramification}.)

The corresponding moduli spaces can be deformed,
in \emph{isomonodromic} fashion,
and quantized.
But in the irregular-singular case one fixes more data than the base pointed surface $(\Sigma,\bm a)$:
we prescribe---%
$G$-orbits of---%
nonresonant `very good' normal forms at each pole,
cf.~\cite{boalch_2001_symplectic_manifolds_and_isomonodromic_deformations,yamakawa_2019_fundamental_two_forms_for_isomonodromic_deformations}.

\subsubsection{}

Namely,
consider a marked point $a \in \bm a$,
and a disc in $\Sigma$ centred at $a$,
with coordinate $z$.
Upon trivializing the bundle thereon,
pulling back the connection 1-form determines an element of $\mf g \set{\!\set{z}\!} \dif z$,
where $\mf g\set{\!\set{z}\!} \ceqq \mf g \ots_{\mb C} \bigl( \mb C \set{\!\set{z}\!} \bigr)$,
invoking the field of convergent Laurent series.

However,
we are only interested in the formal germ of $\bm{\mc A}$ at $a$.
Precisely,
consider the \emph{completed} local ring $\ms O_a$ of $\Sigma$ at $a$,
with maximal ideal $\mf m_a$.
Then $\mc D_a \ceqq \Spec \ms O_a$ is a formal neighbourhood of $a$,
and $z$ yields a \emph{uniformizer} $\varpi_a \in \mf m_a$.
With these choices,
pulling back $\bm{\mc A}$ even further determines an element
\begin{equation}
	\label{eq:formal_laurent_expansion}
	\wh{\mc A}_a
	\in \mf g (\!( \varpi_a )\!) \dif \varpi_a,
	\qquad \mf g (\!( \varpi_a )\!)
	\ceqq \mf g \ots_{\mb C} \bigl( \mb C (\!( \varpi_a )\!) \bigr).
\end{equation}
Then we act by the formal-holomorphic gauge group $G \llb \varpi_a \rrb$,
looking for a distinguished element in the corresponding gauge orbit $\wh{\mc O}'_a$ of $\wh{\mc A}_a$:
we suppose that $\wh{\mc O}'_a$ contains a (formal) \emph{normal form}
\begin{equation}
	\label{eq:normal_form_intro}
	\mc A'_a
	= \Lambda'_a \varpi_a^{-1}\dif \varpi_a + \dif Q'_a,
	\qquad Q'_a
	= \sum_{i = 1}^{s_a-1} A'_{a,i} \frac{\varpi_a^{-i}}{-i}.
\end{equation}
Here $s_a \geq 1$ is the pole order of~\eqref{eq:formal_laurent_expansion},
while the coefficients $\Lambda'_a,A'_{a,1},\dc,A'_{a,s_a-1} \in \mf g$ are semisimple and commute with each other,
and the (formal) residue $A'_{a,0} \ceqq \Lambda'_a$ is \emph{nonresonant}.
(Cf.~Def.~\ref{def:nuts_connections}.)

\begin{rema}
	Note that~\eqref{eq:normal_form_intro} is a meromorphic 1-form which coincides with its \emph{principal part},
	and that there are definitions which do \emph{not} use uniformizers (cf.~Rmk.~\ref{rmk:global_intrinsicality}).
	Moreover,
	the normal form prescribes the polar divisor of $\bm{\mc A}$,
	viz.,
	\begin{equation}
		\label{eq:polar_divisor}
		D
		= D \bigl( \bm a,\smash{\wh{\bm{\mc O}}}' \bigr)
		\ceqq \sum_{\bm a} s_a [a]. \qedhere
	\end{equation}
\end{rema}

\subsubsection{}

Given a multiset of such gauge orbits $\smash{\wh{\bm{\mc O}}}' = \set{ \wh{\mc O}'_a }_{\bm a}$,
the \emph{naive de Rham groupoid}
\begin{equation}
	\label{eq:de_rham_groupoid}
	\mc C_{\dR}
	= \mc C_{\dR} \bigl( \wh{\bm\Sigma};G \bigr),
	\qquad \wh{\bm\Sigma}
	\ceqq \bigl( \Sigma,\bm a,\smash{\wh{\bm{\mc O}}}' \bigr),
\end{equation}
is the category of:
\begin{description}
	\item[objects]
	      meromorphic connections $(\bm\pi,\bm{\mc A})$ on $(\Sigma,\bm a)$,
	      such that $\wh{\mc A}_a \in \wh{\mc O}'_a$ for each $a \in \bm a$,
	      in the notation of~\eqref{eq:formal_laurent_expansion};

	\item[(iso)morphisms $(\bm\pi,\bm{\mc A}) \to \bigl( \wt{\bm \pi},\wt{\bm{\mc A}} \bigr)$]
	      (iso)morphisms $\Phi \cl \bm{\mc E} \lxra{\simeq} \wt{\bm{\mc E}}$ of holomorphic principal $G$-bundles,
	      covering the identity of $\Sigma$,
	      such that $\Phi^*\bigl( \wt{\bm{\mc A}} \bigr) = \bm{\mc A}$.
\end{description}
In the nonresonant setting,
one may equivalently impose that the principal part of~\eqref{eq:formal_laurent_expansion} lies in the orbit of~\eqref{eq:normal_form_intro} for the action of a \emph{truncated} gauge group $G_{s_a}$ (cf.~\S~\ref{sec:classification_on_disc}).
The latter integrates the TCLA
\begin{equation}
	\label{eq:tcla_intro}
	\mf g_{s_a}
	= \mf g \ots_{\mb C} \bigl( \ms O_{s_a[a]} \bigr),
	\qquad \ms O_{s_a[a]}
	\ceqq \ms O_a \slash \mf m_a^{s_a},
\end{equation}
and the truncated gauge-orbit of the principal part can be identified with a coadjoint orbit $\mc O'_a \sse \mf g_{s_a}^{\dual}$:
its KKS structure~\cite{kirillov_1962_unitary_representations_of_nilpotent_lie_groups,kostant_1970_quantisation_and_unitary_representations_i_prequantization,souriau_1970_structure_des_systemes_dynamiques} is what we quantize later on.
Moreover,
this truncation leads to finite-dimensional descriptions of open moduli (sub)spaces,
in genus-zero,
as complex-affine $G$-Hamiltonian reductions.
(Cf.~\S~\ref{sec:fin_dim_description_intro};
the orbits $\mc O'_a$ are \emph{affine} varieties by Prop.~\ref{prop:affine_orbits}.)

\subsubsection{}
\label{sec:wild_riemann_surfaces}

On the subject of moduli spaces,
recall that one way to start constructing (analytic) moduli stacks is to work in families.
One considers deformations of the starting data of $\wh{\bm\Sigma}$,
parameterized by complex manifolds/analytic spaces $\bm B$,
and defines a groupoid for each such $\bm B$:
cf.,
e.g.,
\cite[App.~A]{doucot_rembado_tamiozzo_2024_moduli_spaces_of_untwisted_wild_riemann_surfaces} (and~\S~5 of op.~cit. for the algebraic setup,
working over nonsingular complex projective curves,
etc.).

In turn,
there are \emph{admissible deformations}~\cite[Def.~10.1 + Rmk.~10.6]{boalch_2014_geometry_and_braiding_of_stokes_data_fission_and_wild_character_varieties} of the wild Riemann surface underlying the decorated pointed surface $\wh{\bm\Sigma}$ of~\eqref{eq:de_rham_groupoid},
i.e.,
the triple
\begin{equation}
	\bm \Sigma \ceqq (\Sigma,\bm a,\bm\Theta'),
\end{equation}
where $\bm\Theta' \ceqq \set{\Theta'_a}_{\bm a}$ is the multiset of \emph{irregular classes} obtained from $\smash{\bm{\wh{\mc O}}}'$ (cf.~Def.~\ref{def:unframed_nuts_connections_2}).
The main motivational statement is that these admissible deformations provide an intrinsic/topological view on the nonlinear PDEs which govern the isomonodromic deformations of $(\bm\pi,\bm{\mc A})$.
This leads to algebraic Poisson/symplectic actions of `wild' generalizations of the mapping class group of pointed surfaces,
on \emph{wild} character varieties~\cite{boalch_2014_geometry_and_braiding_of_stokes_data_fission_and_wild_character_varieties,boalch_yamakawa_2015_twisted_wild_character_varieties},
generalizing the much-studied representations of surface groups.
(Please refer to~\cite{doucot_rembado_tamiozzo_2022_local_wild_mapping_class_groups_and_cabled_braids,doucot_rembado_2025_topology_of_irregular_isomonodromy_times_on_a_fixed_pointed_curve,boalch_doucot_rembado_2025_twisted_local_wild_mapping_class_groups_configuration_spaces_fission_trees_and_complex_braids,doucot_rembado_tamiozzo_2024_moduli_spaces_of_untwisted_wild_riemann_surfaces,doucot_rembado_yamakawa_twisted_g_local_wild_mapping_class_groups} for more details and references to the past work of many more people,
besides the authors and their collaborators.)

\subsubsection{}
\label{sec:quantum_actions}

Another important piece of motivation is that the isomonodromy equations can be locally expressed as the flow of an integrable nonautonomous Hamiltonian system:
the latter can also be quantized,
leading to `quantum' actions of braid and mapping class groups.
E.g.,
in the case of Fuchsian systems on the sphere,
the quantization of the Schlesinger system~\cite{schlesinger_1905_ueber_die_loesungen_gewisser_linearer_differentialgleichungen_als_funktionen_der_singularen_punkte,schlesinger_1912_ueber_eine_klasse_von_differentialsystemen_beliebiger_ordnung_mit_festen_kritischen_punkten} yields the Knizhnik--Zamolodchikov connection (= KZ~\cite{knizhnik_zamolodchikov_1984_current_algebra_and_wess_zumino_model_in_two_dimensions});
and adding one generic pole of order two at infinity leads to the connections of De Concini--Millson--Toledano Laredo/Felder--Markov--Tarasov--Varchenko (= DMT/FMTV~\cite{millson_toledanolaredo_2005_casimir_operators_and_monodromy_representations_of_generalised_braid_groups,felder_markov_tarasov_varchenko_2000_differential_equations_compatible_with_kz_equations}),
quantizing~\cite{jimbo_miwa_mori_sato_1980_density_matrix_of_an_impenetrable_bose_gas_and_the_fifth_painleve_transcendent}.
The monodromy of KZ is a representation of Artin's standard braid group,
featuring in the statement of the Drinfel'd--Kohno theorem~\cite{kohno_1987_monodromy_representation_of_braid_groups_and_yang_baxter_equations,drinfeld_1989_quasi_hopf_algebras,drinfeld_1989_quasi_hopf_algebras_and_knizhnik_zamolodchikov_equations};
while the monodromy of DMT/FMTV adds on an action of Brieskorn--Deligne's $G$-braid groups~\cite{brieskorn_1971_die_fundamentalgruppe_des_raumes_der_regulaeren_orbits_einer_endlichen_komplexen_spiegelungsgruppe,deligne_1972_les_immeubles_des_groupes_de_tresses_generalises},
and it recovers Lusztig's action~\cite{lusztig_1990_quantum_groups_at_roots_of_1} (cf.~\cite{soibelman_1990_algebra_of_functions_on_a_compact_quantum_group_and_its_representations,kirillov_reshetikhin_1990_q_weyl_group_and_a_multiplicative_formula_for_universal_r_matrices}),
in a Drinfel'd--Kohno theorem for $q$-Weyl groups~\cite{toledanolaredo_2002_a_kohno_drinfeld_theorem_for_quantum_weyl_groups} (cf.~\cite{boalch_2002_g_bundles_isomonodromy_and_quantum_weyl_groups}).
These `quantum' flat connections were generalized in~\cite{rembado_2019_simply_laced_quantum_connections_generalising_kz,yamakawa_2022_quantization_of_simply_laced_isomonodromy_systems_by_the_quantum_spectral_curve_method},
allowing for a nongeneric pole of order 3,
in a quantization of~\cite{boalch_2012_simply_laced_isomonodromy_systems}.

In this text,
instead,
we treat meromorphic connections with \emph{arbitrary} polar divisor,
without relying on quiver-modularity results~\cite{boalch_2012_simply_laced_isomonodromy_systems,hiroe_yamakawa_2014_moduli_spaces_of_meromorphic_connections_and_quiver_varieties}.
Moreover,
contrary to~\cite{gaiur_mazzocco_rubtsov_2023_isomonodromic_deformations_confluence_reduction_and_quantisation},
fixing the formal gauge orbits $\smash{\wh{\bm{\mc O}}}'$ selects a symplectic leaf within a Poisson moduli space,
and so we are naturally led to quotients of the universal enveloping algebra of~\eqref{eq:tcla_intro} (cf.~Rmk.~\ref{rmk:vermas_are_quotients}).
This is required,
in order to relate the family of quantizations with bundles of conformal blocks for the category $\mc O$ of~\cite{chaffe_2023_category_o_for_takiff_lie_algebras,chaffe_topley_2023_category_o_for_truncated_current_lie_algebras} (cf.~\cite{felder_rembado_2023_singular_modules_for_affine_lie_algebras_and_applications_to_irregular_wznw_conformal_blocks,calaque_felder_rembado_wentworth_2024_wild_orbits_and_generalised_singularity_modules_stratifications_and_quantisation}).
Nonetheless,
we also use the `confluence' of simple poles to establish the most difficult criteria for flatness (cf.~\cite{nagoya_sun_2010_confluent_primary_fields_in_the_conformal_theory, nagoya_sun_2011_confluent_kz_equations_for_sl_n_with_poincare_rank_2_at_infinity}),
noting however that our constructions rather go in the opposite direction:
we \emph{unfold}~\cite{hiroe_2024_deformation_of_moduli_spaces_of_meromorphic_connections_on_p_1_via_unfolding_of_irregular_singularities} one irregular singularity into several regular ones,
embedding wild de Rham spaces into `residue manifolds'~\cite{hitchin_1997_frobenius_manifolds} (cf.~Cor.~\ref{cor:unfolding_in_action}).
Moreover,
we construct Darboux coordinates on untwisted semisimple coadjoint orbits (cf.~Thm.~\ref{thm:darboux_coordinates}).

\begin{rema}
	\label{rmk:hitchin}

	There is quite some more literature around isomonodromy Hamiltonians,
	also in higher genus.
	In turn,
	the latter relates with (semiclassical limits of) the connections of Bernard/Tsuchiya--Ueno--Yamada (= KZB/TUY~\cite{bernard_1988_on_the_wess_zumino_witten_models_on_the_torus,bernard_1989_on_the_wess_zumino_witten_models_on_riemann_surfaces,tsuchiya_ueno_yamada_1989_conformal_field_theory_on_universal_family_of_stable_curves_with_gauge_symmetries}).

	As a last bibliographical point,
	we point out the `geometrized' versions of KZ(B)/TUY~\cite{egsgaard_2015_hitchin_connection_for_genus_0_quantum_representation,biswas_mukhopadhyay_wentworth_2023_a_hitchin_connection_on_nonabelian_theta_functions_for_parabolic_g_bundles},
	related with `parabolic' extensions of the Hitchin connection~\cite{hitchin_1990_flat_connections_and_geometric_quantization,axelrod_dellapietra_witten_1991_geometric_quantisation_of_chern_simons_gauge_theory} in geometric quantization.
	They are constructed by algebro-geometric means (cf.~\cite{baier_bolognesi_martens_pauly_2023_the_hitchin_connection_in_arbitrary_characteristic}),
	extending the famous relation~\cite{beauville_1993_monodromie_des_systems_differentiels_lineaires_a_poles_simples_sur_la_sphere_d_apres_a_bolibruch,faltings_1994_a_proof_for_the_verlinde_formula,laszlo_1998_hitchin_s_and_wzw_connections_are_the_same} between nonabelian theta functions and WZNW conformal blocks in 2$d$ conformal field theory~\cite{belavin_polyakov_zamolodchikov_1984_infinite_conformal_symmetry_in_two_dimensional_quantum_field_theory}.
	Much of this generalizes into the wild case.
\end{rema}

\subsubsection{}

We plan to further pursue the above perspective on the `time-dependent' quantization of the isomonodromy systems,
and the relation with WZNW conformal blocks---%
elsewhere.
In this text we rather work on a \emph{fixed} wild Riemann surface,
in order to prepare the construction of the `quantum' phase-spaces.
Thus,
in first approximation,
one can consider the set of isomorphism classes in the de Rham groupoid,
denoted by $\mc M_{\dR} = \mc M_{\dR} \bigl( \wh{\bm\Sigma};G \bigr)$.

Then there is also a different gauge-theoretic construction of the moduli spaces,
which rephrases the moduli problem as a group action,
matching up isomorphism classes with orbits in a large parameter space:
this typically endows $\mc M_{\dR}$ with much more geometric structure,
viewing it as the Hamiltonian reduction of an infinite-dimensional complex affine space.
E.g.,
when on vector bundles,
the main construction of~\cite{biquard_boalch_2004_wild_nonabelian_hodge_theory_on_curves} (cf.~\cite{sabbah_1999_harmonic_metrics_and_connections_with_irregular_singularities}) yields a \emph{complete} hyperkähler manifold.
The underlying holomorphic symplectic structure is a generalization of the Narasimhan/Atiyah--Bott/Goldmann form~\cite{narasimhan_1970_geometry_of_moduli_spaces_of_vector_bundles,atiyah_bott_1983_yang_mills_equations_over_riemann_surfaces,goldman_1984_the_symplectic_nature_of_fundamental_groups_of_surfaces},
first introduced in~\cite{boalch_2001_symplectic_manifolds_and_isomonodromic_deformations} (in the generic case),
which also extends to more general reductive structure groups~\cite{boalch_2007_quasi_hamiltonian_geometry_of_meromorphic_connections}.

\subsubsection{}
\label{sec:fin_dim_description_intro}

And indeed,
as in~\cite{boalch_2007_quasi_hamiltonian_geometry_of_meromorphic_connections,yamakawa_2019_fundamental_two_forms_for_isomonodromic_deformations},
we do \emph{not} restrict to vector bundles.
Rather,
we consider the classical setup for the theory of isomonodromic deformations.
Precisely,
we consider meromorphic connections on the \emph{trivial} holomorphic principal $G$-bundle over $\Sigma \ceqq \mb CP^1$.
This yields a subgroupoid $\mc C_{\dR}^* \sse \mc C_{\dR}$,
whose objects map bijectively\fn{
	Cf.~\eqref{eq:fin_dim_description_de_rham},
	and note that nonresonance is required to ensure surjectivity;
	otherwise,
	cf.~Rmk.~\ref{rmk:resonant_de_rham}.}~to the $\mb C$-points of a \emph{finite-dimensional} irreducible complex affine variety $M$.
The latter is a subvariety of the product of truncated gauge orbits $\bm{\mc O}' \ceqq \set{ \mc O'_a }_{\bm a}$,
i.e.,
the multiset of coadjoint orbits through the normal forms $\mc A'_a \in \mf g_{s_a}^{\dual}$ (cf.~\eqref{eq:normal_form_intro});
and two connections are isomorphic if and only if the corresponding points in $M$ are related by the diagonal $G$-action.
(Cf.~\cite{boalch_2001_symplectic_manifolds_and_isomonodromic_deformations,boalch_2007_quasi_hamiltonian_geometry_of_meromorphic_connections,yamakawa_2019_fundamental_two_forms_for_isomonodromic_deformations};
this is recalled in \S~\ref{sec:unframed_de_rham}.)
To get the corresponding moduli (sub)space $\mc M_{\dR}^* \sse \mc M_{\dR}$ we then choose sufficiently generic orbits $\bm{\mc O}'$,
so that all connections are \emph{naively} stable (cf.~\cite[Rmk.~2.37]{boalch_1999_symplectic_geometry_and_isomonodromic_deformations} and~\cite[Rmk.~9.7]{boalch_2012_simply_laced_isomonodromy_systems} in the vector-bundle case).
I.e.,
we give a sufficient condition so that \emph{all} points of $M$ are stable for the action of the projectivized group $\bm P(G) \ceqq G \slash Z(G)$ (cf.~\S~\ref{sec:stability}).

\begin{rema}
	\label{rmk:using_marking}

	The normal form $\mc A'_a$ at the marked point $a \in \bm a$ is determined upon framing the bundle there.
	Else,
	it is only the $G$-\emph{orbit} of $\mc A'_a$ which is intrinsic (cf.~Def.~\ref{def:unframed_nuts_connections_2}.)
	With this caveat,
	all our `semiclassical' constructions remain canonical,
	while the `quantum' ones rely on the choice of a polarization of $\mc O'_a$ (as it typically happens,
	cf.~\S~\ref{sec:using_marking_2}).
\end{rema}

\begin{rema}
	The stable locus $M^{\on s} \sse M$ might be empty:
	the (irregular) \emph{additive}~\cite{kostov_1999_on_the_deligne_simpson_problem,kostov_2004_on_the_deligne_simpson_problem_and_its_weak_version} Deligne--Simpson problem~\cite{simpson_1991_products_of_matrices} is to characterize the choices of orbits $\bm{\mc O}'$ such that there exist stable connections with such local data.
	E.g.,
	in the logarithmic vector-bundle case,
	the necessary/sufficient condition~\cite{crawleyboevey_2003_on_matrices_in_prescribed_conjugacy_classes_with_no_common_invariant_subspace_and_sum_zero} relies on the quiver-theoretic description of the moduli spaces~\cite{crawleyboevey_2001_geometry_of_the_moment_map_for_representations_of_quivers};
	this was extended to the irregular-singular case in~\cite{boalch_2008_irregular_connections_and_kac_moody_root_systems} (cf.~\cite[\S~10]{boalch_2012_simply_laced_isomonodromy_systems}).
	In our situation,
	the level set $M$ is \emph{nonempty} if the $G$-moment map~\eqref{eq:moment_map} is faithfully flat,
	as per Thm.~\ref{thm:main_result};
	the condition~\eqref{eq:stability_condition} then insures that $M^{\on s} = M \neq \vn$.
\end{rema}

\begin{rema}
	\label{rmk:nonabelian_hodge}

	A finer notion of stability involves parabolic degrees/slopes,
	cf.~again~\cite{biquard_boalch_2004_wild_nonabelian_hodge_theory_on_curves};
	and in principle \emph{parahoric} structures are required for a general group $G$,
	even in the tame case~\cite{boalch_2011_riemann_hilbert_for_tame_complex_parahoric_connections}.
	We will \emph{not} discuss this in this text,
	particularly because we focus on the de Rham side of wild nonabelian Hodge theory---%
	and so we are not concerned with the rotation of the weights,
	etc.
\end{rema}

\subsection{Main results}
\label{sec:results}

In the end,
the affine GIT quotient of $M = M^{\on s}$ by $G$ yields a genuine orbit space,
which is our model for $\mc M^*_{\dR}$.

\subsubsection{}

Recall that~\cite{calaque_felder_rembado_wentworth_2024_wild_orbits_and_generalised_singularity_modules_stratifications_and_quantisation} set up a representation-theoretic quantization of the coadjoint orbits $\mc O'_a \sse \mf g_{s_a}^{\dual}$ through the principal parts $\mc A'_a$ of~\eqref{eq:normal_form_intro},
based on~\cite{alekseev_lachowska_2005_invariant_star_product_on_coadjoint_orbits_and_the_shapovalov_pairing}.
It relies on certain induced modules $M^\pm_a = M^\pm_{\mc A'_a}$ for TCLAs to be generically simple (cf.~Def.~\ref{def:vermas}).
These are the Verma modules of the category $\mc O$ of~\cite{chaffe_2023_category_o_for_takiff_lie_algebras,chaffe_topley_2023_category_o_for_truncated_current_lie_algebras},
generalizing Verma modules for reductive Lie algebras---%
referred to as `singularity modules' in~\cite{calaque_felder_rembado_wentworth_2024_wild_orbits_and_generalised_singularity_modules_stratifications_and_quantisation};
cf.~\cite{wilson_2011_highest_weight_theory_for_truncated_current_lie_algebras,felder_rembado_2023_singular_modules_for_affine_lie_algebras_and_applications_to_irregular_wznw_conformal_blocks} in the regular/generic case,
and Rmk.~\ref{rmk:vermas_are_quotients} in general.

Concretely,
let $\mc R_{a,0} \ceqq \mb C[\mc O'_a]$ be the commutative Poisson ring of regular functions on the coadjoint orbit $\mc O'_a$ through an untwisted semisimple principal part $\mc A'_a$.
In this text we first complement the deformation quantization of $\mc R_{a,0}$,
proving the following:

\begin{theo}[cf.~Prop.~\ref{prop:quantum_nonresonance} +~Thm.~\ref{thm:strong_quantum_comoment}]
	\label{thm:nuts_quantization}

	Choose two parabolic sequences in $G$ as in~\eqref{eq:parabolic_sequence},
	such that the associated polarizations of $\on T \! \mc O'_a$ are `balanced' (cf.~Rmk.~\ref{rmk:balanced_polarizations}).
	Then:
	\begin{enumerate}
		\item
		      the $\ast$-product $\mc R_{a,0} \ots_{\mb C} \mc R_{a,0} \to \mc R_{a,0} \llb \hs \rrb$ of~\cite{calaque_felder_rembado_wentworth_2024_wild_orbits_and_generalised_singularity_modules_stratifications_and_quantisation} admits a \emph{strong} quantum comoment map,
		      generating the natural `quantum' $G_{s_a}$-action;

		\item
		      and if $\mc A'_a$ is nonresonant,
		      then the $U(\mf g_{s_a})$-modules $M^\pm_a$,
		      are \emph{simple}.
	\end{enumerate}
\end{theo}

\subsubsection{}
\label{sec:main_theorem}

Second,
we construct the quantum Hamiltonian reduction of the corresponding product of `quantum' orbits $\set{ \wh{\mc R}_{a,\hs} }_{\bm a}$,
modulo the diagonal $G$-action,
in the standard way~\cite{etingof_2007_calogero_moser_systems_and_representation_theory} (cf.~\cite{rembado_2020_symmetries_of_the_simply_laced_quantum_connections_and_quantisation_of_quiver_varieties}).
This yields in principle a quantization of wild de Rham spaces:
the caveat is proving that the corresponding deformation of the coordinate ring is \emph{flat}.
A uniform way to ensure it lies in the geometry of the $G$-moment map for the `semiclassical' action on the orbit product (cf.~Lem.~\ref{lem:quantum_flatness}).

At this final stage,
it is cleaner to assume that $G$ is \emph{semisimple} (w.l.o.g.,
cf.~Lem.-Def.~\ref{lem:semisimple_reduction}).
Then we give an explicit `global' criterion for the faithful flatness of the moment map.
Namely,
for each marked point $a \in \bm a$ denote by $\nu_a \geq 0$ the number of nested $\Ad_G$-stabilizers of the coefficients of~\eqref{eq:normal_form_intro},
starting from the top and including the residue,
which are equal to a maximal torus of $G$.
This involves a `fission' sequence of reductive subgroups of $G$~\cite{boalch_2009_through_the_analytic_halo_fission_via_irregular_singularities,boalch_2014_geometry_and_braiding_of_stokes_data_fission_and_wild_character_varieties} (cf.~Rmk.~\ref{rmk:fission}).
Importantly,
the multiset $\set{ \nu_a }_{\bm a}$ is \emph{intrinsic} (cf.~Lem.~\ref{lem:coordinate_invariance}).
Moreover,
it only depends on the $G$-orbit of~\eqref{eq:normal_form_intro},
whence it is uniquely determined by the choice of wild Riemann surface,
together with a residue orbit at each marked point.
Then one can prove the following (main):

\begin{theo}[cf.~\S~\ref{sec:flatness_general}]
	\label{thm:main_result}

	Suppose that $G$ is semisimple,
	and that
	\begin{equation}
		\label{eq:wild_euler_characteristic}
		\chi \bigl( \wh{\bm\Sigma} \bigr) < 0,
		\qquad \text{where}
		\qquad \chi \bigl( \wh{\bm\Sigma} \bigr)
		\ceqq 2 - \sum_{\bm a} \nu_a \in \mb Z.
	\end{equation}
	Then:
	\begin{enumerate}
		\item
		      the semiclassical `diagonal' $G$-moment map on the orbit product is faithfully \emph{flat};

		\item
		      and the level-zero quantum $G$-Hamiltonian reduction of the tensor product of quantum orbits is a \emph{flat} deformation quantization of $\mc M^*_{\dR}$.
	\end{enumerate}
\end{theo}

\subsubsection{}

Recall that in the \emph{generic} case the leading coefficient at the marked point $a \in \bm a$ is regular semisimple.
Then one readily extracts the following important:

\begin{coro}
	\label{cor:main_corollary}

	Suppose that $G$ is semisimple,
	that all poles are generic,
	and that
	\begin{equation}
		2 - \abs D < 0,
		\qquad \text{where}
		\qquad \abs D
		\ceqq \sum_{\bm a} s_a,
	\end{equation}
	in the notation of~\eqref{eq:polar_divisor}.
	Then the conclusions of Thm.~\ref{thm:main_result} hold true.
\end{coro}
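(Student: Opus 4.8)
The plan is to deduce the statement directly from Theorem~\ref{thm:main_result}, the only point to check being that genericity of every pole forces $\nu_a = s_a$ for each $a \in \bm a$, so that the wild Euler characteristic $\chi \bigl( \wh{\bm\Sigma} \bigr) = 2 - \sum_{\bm a} \nu_a$ coincides with $2 - \abs D = 2 - \sum_{\bm a} s_a$, and the hypothesis becomes exactly~\eqref{eq:wild_euler_characteristic}.

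Fix a marked point $a \in \bm a$ and consider the coefficients $\Lambda'_a = A'_{a,0},A'_{a,1},\dc,A'_{a,s_a-1} \in \mf g$ of the normal form~\eqref{eq:normal_form_intro}, which by construction are semisimple and commute pairwise. Let $X$ be the leading coefficient---that is $A'_{a,s_a-1}$ when $s_a > 1$, and the residue $\Lambda'_a$ when $s_a = 1$---which by the genericity hypothesis is \emph{regular} semisimple. First I would recall the standard fact that, for connected reductive $G$, the $\Ad_G$-centraliser $C_G(X)$ of a regular semisimple element is a maximal torus $T \sse G$, whose Lie algebra $\mf t$ is precisely the centraliser of $X$ in $\mf g$; in particular every element of $\mf g$ commuting with $X$ already lies in $\mf t$. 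Since the remaining coefficients all commute with $X$, they therefore all lie in $\mf t$, hence are fixed by $\Ad_T$. It follows that every group in the nested `fission' sequence of $\Ad_G$-stabilisers of the coefficients---taken from the top down to the residue, as in \S~\ref{sec:main_theorem}---equals $T$: the first one is $C_G(X) = T$, and adjoining the further coefficients, which all lie in $\mf t$, leaves it unchanged. Thus all $s_a$ nested stabilisers equal a maximal torus, so $\nu_a$ saturates its maximal possible value, $\nu_a = s_a$.

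Summing over $\bm a$ gives $\sum_{\bm a} \nu_a = \sum_{\bm a} s_a = \abs D$, hence $\chi \bigl( \wh{\bm\Sigma} \bigr) = 2 - \abs D < 0$ by assumption, and the conclusion of Theorem~\ref{thm:main_result} applies verbatim. (That $\nu_a$, and therefore the hypothesis, depends only on $\bm\Sigma$ together with a choice of residue orbit at each pole---and not on the chosen uniformisers---is the intrinsicality already recorded via Lem.~\ref{lem:coordinate_invariance}.) I do not expect a genuine obstacle here: the corollary is a transparent specialisation of Theorem~\ref{thm:main_result}, and the only content is the elementary identification $\nu_a = s_a$ in the generic case. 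The one mild subtlety is bookkeeping---making sure that `leading coefficient' and `top of the fission sequence' refer to $A'_{a,s_a-1}$ rather than to the residue when $s_a > 1$, and that the number of nested stabilisers is indeed $s_a$, one for each of $\Lambda'_a,A'_{a,1},\dc,A'_{a,s_a-1}$.
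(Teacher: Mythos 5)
Your proposal is correct and matches the paper's (very brief) argument: the paper simply records, just before the corollary, that in the generic case "the nested centralizers are equal to one and the same maximal torus of $G$, so that $\nu_a = s_a$", which is exactly the point you establish from the regular semisimplicity of the leading coefficient. Your write-up just spells out the elementary Lie-theoretic step (regular semisimple $X$ has $C_G(X)$ a maximal torus $T$, and the remaining commuting semisimple coefficients land in $\mf t$, so all $s_a$ nested stabilisers stay equal to $T$) that the paper leaves implicit.
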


\subsubsection{}

Thus,
generically,
the criterion for flatness only depends on the pole orders.

More importantly,
Cor.~\ref{cor:main_corollary} includes the $G$-bundle generalization of the examples considered in the seminal work of Jimbo--Miwa--Ueno (= JMU~\cite{jimbo_miwa_ueno_1981_monodromy_preserving_deformation_of_linear_ordinary_differential_equations_with_rational_coefficients_i_general_theory_and_tau_function,jimbo_miwa_1981_monodromy_preserving_deformaton_of_linear_ordinary_differential_equations_with_rational_coefficients_ii,jimbo_miwa_1982_monodromy_preserving_deformaton_of_linear_ordinary_differential_equations_with_rational_coefficients_iii}).
E.g.,
under this genericity assumption one thus quantizes (Hamiltonian reductions of) the symplectic phase-spaces for the standard Lax pairs/representations of the Schlesinger system~\cite{schlesinger_1905_ueber_die_loesungen_gewisser_linearer_differentialgleichungen_als_funktionen_der_singularen_punkte,schlesinger_1912_ueber_eine_klasse_von_differentialsystemen_beliebiger_ordnung_mit_festen_kritischen_punkten},
the `dual' Schlesinger system~\cite{harnad_1994_dual_isomonodromic_deformations_and_moment_maps_to_loop_algebras},
and the system of Jimbo--Miwa--Môri--Sato (= JMMS~\cite{jimbo_miwa_mori_sato_1980_density_matrix_of_an_impenetrable_bose_gas_and_the_fifth_painleve_transcendent});
and all of the untwisted/unramified coalescence hierarchy of the Painlevé equations,
from PVI down to PII.
But importantly Thm.~\ref{thm:main_result} also includes nongeneric cases,
such as the `simply-laced' isomonodromy systems~\cite{boalch_2012_simply_laced_isomonodromy_systems},
which encompass all the previous examples besides PIII and PII.
In particular,
the `higher' Painlevé equations fit this setup; cf.~\S~11.4 of op.~cit,
as well as~\cite{bertola_cafasso_rubtsov_2018_noncommutative_painleve_equations_and_systems_of_calogero_type}.

The main point,
however,
is that the main result covers a plethora of unlisted cases with arbitrary polar divisor and nongeneric irregular singularities~\cite{yamakawa_2019_fundamental_two_forms_for_isomonodromic_deformations}.

\begin{rema}
	A \emph{necessary} condition for flatness will depend on the exact sequences of Levi factors of parabolic subgroups of $G$ at each pole,
	and we do \emph{not} look for one.\fn{
		If $G = \PGL_m(\mb C)$ for some integer $m \geq 1$,
		then the `Birkhoff' orbits~\eqref{eq:birkhoff_orbit} are isomorphic to cotangent bundles of quiver representation spaces,
		and the explicit flatness conditions are in~\cite[Thm.~1.1]{crawleyboevey_2001_geometry_of_the_moment_map_for_representations_of_quivers};
		e.g.,
		there are flat examples involving five nongeneric simple poles with $m = 3$.}~Nonetheless,
	the `wild' Euler--Poincaré characteristic of~\eqref{eq:wild_euler_characteristic} is intimately related with the Deligne--Mumford condition for the stacks $\mc{WM}_{0,n,\bullet} \thra \mc M_{0,n}$ of wild Riemann spheres~\cite{doucot_rembado_tamiozzo_2024_moduli_spaces_of_untwisted_wild_riemann_surfaces},
	where again $n \ceqq \abs{\bm a}$.
	More precisely,
	in the tame/logarithmic case,
	flatness generically holds if $2 - n < 0$,
	which is precisely the condition for $\mc M_{0,n}$ to be Deligne--Mumford;
	in the wild setting,
	instead,
	there are also flat examples with just $n \in \set{1,2}$ marked points,
	and in turn $\mc{WM}_{0,n,\bullet}$ can be Deligne--Mumford if the pole orders there are high enough,
	coherently with Thm.~\ref{thm:main_result} +~Cor.~\ref{cor:main_corollary}.
\end{rema}

\subsection{Layout}
\label{sec:layout}

The layout of this article is as follows.

\subsubsection{}

In \S~\ref{sec:setup_on_disc} we set up the basic terminology for (formal) connections on (formal) discs,
and the (formal) gauge action thereon.
In \S~\ref{sec:classification_on_disc} we explain that the moduli of nonresonant untwisted semisimple connections on discs are ($G$-orbits through) distinguished spaces of principal parts,
in dual TCLAs.
In \S~\ref{sec:quantum_orbits} we recall the definition of a $\ast$-product on untwisted semisimple coadjoint orbits in dual TCLAs,
and prove Thm.~\ref{thm:nuts_quantization}.
In \S~\ref{sec:coordinate_invariance} we show that the previous constructions are independent of the choice of uniformizers,
up to isomorphism;
i.e.,
that one can work canonically and `globally' on---%
principal bundles over---%
pointed Riemann surfaces.
In \S~\ref{sec:unframed_de_rham} we recall the standard finite-dimensional presentation of genus-zero de Rham varieties,
for trivial $G$-bundles,
and define their putative deformation quantization via quantum $G$-Hamiltonian reduction.
In \S\S~\ref{sec:interlude}--\ref{sec:flatness_general} we establish sufficient criteria for the flatness of the $G$-moment map,
proving Thm.~\ref{thm:main_result}.
Finally,
in \S~\ref{sec:stability} we provide a sufficient condition ensuring that all points in the zero-level set of the $G$-moment map are stable for the projectivization of $G$.

App.~\ref{sec:tcla_automorphisms} describes the group of Lie-algebra automorphisms of TCLAs.
App.~\ref{sec:proofs} collects proofs which have been postponed---%
aiming to ease the reading flow.

\subsection{Conventions,
	terminology,
	etc.}

Henceforth,
in this text,
we tacitly use the following notations/conventions/facts (unless otherwise specified):

\begin{itemize}
	\item
	      vector spaces,
	      tensor products,
	      schemes/varieties,
	      algebraic/Lie groups,
	      and Lie/associative algebras,
	      are all defined over $\mb C$;

	\item
	      if $V$ is a finite-dimensional vector space,
	      we identify it with the $\mb C$-points of the affine variety $\Spec \Sym (V^{\dual})$,
	      which is also denoted by $V$;
	      if $S$ is a nonsingular variety,
	      an algebraic map $\mu \cl S \to V$ (of varieties) induces a holomorphic map $\mu^{\on{an}} \cl S(\mb C) \to V$ (of manifolds);
	      the notation will \emph{not} distinguish between $\mu$ and its analytification $\mu^{\on{an}}$,
	      and,
	      at times,
	      neither between $S$ and $S(\mb C)$;
	      in the previous situation,
	      $\mu$ is surjective,
	      with equidimensional (scheme-theoretic) fibres,
	      if and only if this holds for $\mu^{\on{an}}$;

	\item
	      if $\bm G$ is an algebraic group,
	      an algebraic action $\bm G \ts S \to S$ induces a holomorphic action $G \ts S(\mb C) \to S(\mb C)$,
	      where $G \ceqq \bm G(\mb C)$ is the Lie group of $\mb C$-points of $\bm G$,
	      and again the notation might \emph{not} distinguish them;
	      a \emph{Hamiltonian} $\bm G$-\emph{variety} is a Poisson variety $\bigl( S,\set{\cdot,\cdot} \bigr)$,
	      equipped with a $\bm G$-action by Poisson automorphisms,
	      such that the infinitesimal action of $\mf g \ceqq \Lie(\bm G)$ is generated by a $\bm G$-equivariant \emph{moment map} $\mu \cl S \to \mf g^{\dual}$;

	\item
	      if $G$ is a reductive Lie group,
	      and $T \sse G$ a maximal torus,
	      the \emph{Weyl group of} $(G,T)$ is denoted by $W \ceqq N_G(T) \slash T$;
	      if $\mf t \ceqq \Lie(T) \sse \mf g$ is the corresponding Cartan subalgebra,
	      then the \emph{root system of} $(\mf g,\mf t)$ is denoted by $\Phi \sse \mf t^{\dual}$;
	      we let $W$ act on $\mf t$ and $\mf t^{\dual}$ in the standard way,
	      permuting the roots and the coroots $\alpha^{\dual} \in \Phi^{\dual} \sse \mf t$.
\end{itemize}

The end of remarks/examples is signalled by a `$\diamondsuit$',
inspired by~\cite{kirillov_2004_lectures_on_the_orbit_method}.

\section{Formal connections}
\label{sec:setup_on_disc}

\subsection{Principal bundles on DVRs}
\label{sec:bundles_on_disc}

We first consider `formal germs' of untwisted/unramified meromorphic connections on principal bundles over Riemann surfaces,
in an abstract setting. (cf~\cite{babbitt_varadarajan_1983_formal_reduction_theory_of_meromorphic_differential_equations_a_group_theoretic_view};
and~\cite{fernandezherrero_reduction_theory_for_connections_over_the_formal_punctured_disc},
\cite[\S~2.3.1]{doucot_rembado_tamiozzo_2024_moduli_spaces_of_untwisted_wild_riemann_surfaces},
for a modern view).

(Experts might want to skip to the `quantum' version of \S~\ref{sec:quantum_orbits}.)

\subsubsection{}

Let $\bm G$ be a connected reductive algebraic group,
with Lie algebra $\mf g \ceqq \Lie(\bm G)$.
Let also $\ms O$ be a \emph{complete} DVR,
with maximal ideal $\mf m \sse \ms O$ and residue field isomorphic to $\mb C$:
we fix an identification $\ms O \bs \mf m \simeq \mb C$,
and a coefficient field $\mb C \hra \ms O$~\cite{cohen_1946_on_the_structure_and_ideal_theory_of_complete_local_rings}.
If $\ms O \hra \ms K \ceqq \Frac(\ms O)$ is the fraction field,
consider the $\ms O$-submodules
\begin{equation}
	\label{eq:inverse_power_max_ideal}
	\mf m^{-s} \ms O
	\ceqq \Set{ f \in \ms K | \mf m^s f \sse \ms O },
	\qquad s \in \mb Z_{> 0}.
\end{equation}

In the terminology of~\cite[\S~A.1.1]{frenkel_benzvi_2001_vertex_algebras_and_algebraic_curves},
the \emph{disc} is $\mc D \ceqq \Spec \ms O$,
with \emph{origin} $0 \ceqq \mf m$.

\subsubsection{}

Now consider a principal $\bm G$-bundle $\pi \cl \mc E \to \mc D$.
One typically requires $\pi$ to be locally trivial in the fppf topology,
which over a local Henselian ring is the same as being étale-locally-trivial~\cite[Prop.~8.1]{demazure_grothendieck_1970_schemas_en_groupes_iii_structure_des_schemas_en_groupes_reductifs}.
In turn,
the first cohomology $H^1_{\on{\textnormal{ét}}} \bigl( \mc D;\bm G \bigr)$ is trivial,
and so $\pi$ is \emph{globally} trivializable.
Hereafter,
fix a trivialization $\mc E \simeq \mc D \ts \bm G \to \mc D$;
changing it amounts to acting by the \emph{gauge group of} $\pi$:
\begin{equation}
	\label{eq:formal_gauge_group}
	\bm G (\ms O)
	\ceqq \Hom_{\on{Sch}} (\mc D,\bm G).
\end{equation}

A \emph{frame of} $\pi$ \emph{at} $0$ is the choice of a $\mb C$-point in the special fibre,
i.e.,
equivalently,
an element $g \in G \ceqq \bm G(\mb C)$.
The triple $(\pi,0,g)$ is a \emph{framed} principal $\bm G$-bundle over the pointed disc $( \mc D,0 )$.
The \emph{based} gauge group of $(\pi,0,g)$ is the kernel of the group morphism $\bm G (\ms O) \thra G$,
suggestively denoted by $\bm G(\mf m)$.
The corresponding group sequence splits,
and the factorization $\bm G (\ms O) \simeq G \lts \bm G \bigl( \mf m \bigr)$ will be implicit.
There is a corresponding split exact sequence of Lie algebras:
\begin{equation}
	0 \lra \mf g (\mf m) \lra \mf g (\ms O) \lra \mf g \lra 0,
\end{equation}
where $\mf g (\mf m) \ceqq \mf g \ots \mf m$ and $\mf g(\ms O) \ceqq \mf g \ots \ms O$ inherit a Lie bracket from $\mf g$ via
\begin{equation}
	\label{eq:lie_algebra_from_associative_algebra}
	\bigl[ X \ots f,Y \ots f' \bigr]
	\ceqq [X,Y] \ots (ff'),
	\qquad X,Y \in \mf g,
	\quad f,f' \in \ms O.
\end{equation}
More generally,
for any $\ms O$-module $V$ we consider the vector space $\mf g(V) \ceqq \mf g \ots V$.
If $W \sse V$ is a submodule,
the $\mb C$-linear isomorphism $\mf g (V \slash W) \simeq \mf g(V) \bs \mf g(W)$ is tacit in what follows.
If $\mc R$ is a (possibly nonunital) commutative algebra,
then we regard $\mf g(\mc R)$ as a Lie algebra (cf.~\eqref{eq:lie_algebra_from_associative_algebra}).

\subsection{Connections}

We then look at connections on $\pi$,
possibly singular at the origin.
As in the nonformal setting (cf.~\S~\ref{sec:background}),
one can define them in terms of 1-forms on the total space of the principal bundle;
but here we will just work on the base---%
in the given trivialization.

\subsubsection{}

Consider the module of \emph{continuous} Kähler differentials of $\ms K \!\slash \mb C$,
denoted by
\begin{equation}
	\label{eq:kaehler_differentials}
	\ms K \lxra{\dif} \Omega^1_{\ms K}
	= \Omega^1_{\ms K \! \slash \mb C}.
\end{equation}
A \emph{connection on} $\pi$,
possibly singular at $0$,
can be just defined as an element $\wh{\mc A} \in \mf g (\Omega^1_{\ms K})$.
We will also use the $\ms O$-submodule $\Omega^1_{\ms O} = \Omega^1_{\ms O \slash \mb C} \hra \Omega^1_{\ms K}$,
so that $\mf g (\Omega^1_{\ms O}) \sse \mf g(\Omega^1_{\ms K})$ is the vector subspace of \emph{nonsingular connections}.
There is an $\ms O$-linear identification $\Omega^1_{\ms K} \simeq \Omega^1_{\ms O} \ots_{\ms O} \ms K$,
and analogously to~\eqref{eq:inverse_power_max_ideal} we consider the $\ms O$-submodule
\begin{equation}
	\mf m^{-s} \Omega^1_{\ms O}
	\ceqq \Omega^1_{\ms O} \ots_{\ms O} \bigl( \mf m^{-s} \ms O \bigr)
	\simeq \Set{ \alpha \in \Omega^1_{\ms K} | \mf m^s \alpha \in \Omega^1_{\ms O} },
	\qquad s \in \mb Z_{> 0}.
\end{equation}
Then $\mf g ( \mf m^{-s}\Omega^1_{\ms O} )$ is the subspace of connections \emph{with pole order bounded by} $s$.
Finally,
given a connection $\wh{\mc A}$,
its \emph{principal part} $\mc A$ is the equivalence class in $\mf g \bigl( \Omega^1_{\ms K} \slash \Omega^1_{\ms O} \bigr)$.

\subsubsection{}
\label{sec:tcla_duality}

Fix an invariant pairing $( \cdot \mid \cdot ) \cl \mf g \ots \mf g \to \mb C$.
Via the usual 2-cocycle of the loop algebra $\mf g (\ms K)$,
which involves the \emph{residue map} $\Res \cl \Omega^1_{\ms K} \to \mb C$,
we pair the space of pole-order-bounded principal parts with the TCLA
\begin{equation}
	\label{eq:takiff_lie_algebra}
	\mf g_s = \mf g (\ms O_s),
	\qquad \ms O_s \ceqq \ms O \bs \mf m^s,
\end{equation}
analogously to~\eqref{eq:tcla_intro} (cf.~\cite{boalch_2001_symplectic_manifolds_and_isomonodromic_deformations}).
This remains nondegenerate:
hereafter,
we consider the principal part $\mc A$,
of a pole-order-bounded connection $\wh{\mc A}$,
as an element of $\mf g_s^{\dual}$.

Denote by $\Lambda \ceqq \Res(\wh{\mc A}) \in \mf g$ the residue of a connection $\wh{\mc A}$.
Somewhat conversely,
the \emph{irregular part of} $\wh{\mc A}$ is the equivalence class of $\wh{\mc A}$ in $\mf g \bigl( \mf m^{-s} \Omega^1_{\ms O} \slash \mf m^{-1}\Omega^1_{\ms O} \bigr)$.
The structural derivation in~\eqref{eq:kaehler_differentials} induces a $\mb C$-linear isomorphism
\begin{equation}
	\ms K \!\slash \ms O \simeq \Omega^1_{\ms K} \bs \mf m^{-1} \Omega^1_{\ms O},
\end{equation}
and so:
(i) there exists $Q \in \mf g ( \mf m^{1-s} \ms O )$ such that the irregular part is represented by $\dif Q$;
and (ii) the class of $Q$ modulo $\mf g (\ms O)$ is uniquely determined by the irregular class.

\subsubsection{}
\label{sec:birkhoff}

Note that~\eqref{eq:takiff_lie_algebra} is also the Lie algebra of the (nonreductive) Lie group $G_s \ceqq \bm G(\ms O_s)$,
i.e.,
the \emph{truncated-current Lie group} (= TCLG).
Then the canonical projection $\ms O_s \thra \ms O_s \bs (\mf m \slash \mf m^s) \simeq \mb C$ yields an exact group sequence:
\begin{equation}
	\label{eq:tclg_sequence}
	1 \lra \on{Bir}_s \lra G_s \lra G \lra 1,
\end{equation}
whose kernel is sometimes referred to as the \emph{Birkhoff} subgroup.
Moreover,
the given splitting $\mb C \hra \ms O$ induces an analogous one $\mb C \hra \ms O_s$,\fn{
	Making $\ms O_s$ into a \emph{finite-dimensional} algebra,
	so that $G_s$ can also be regarded as an algebraic group;
	the notation will not distinguish the two structures.}~whence a semidirect factorization $G_s \simeq G \lts \on{Bir}_s$.
Infinitesimally,
this yields a Lie-algebra decomposition
\begin{equation}
	\mf g_s \simeq \mf g \lts \mf{bir}_s,
	\qquad \mf{bir}_s \ceqq \mf g ( \mf m \slash \mf m^s )
	\simeq \Lie(\on{Bir}_s).
\end{equation}
Then the dual vector space splits as $\mf g_s^{\dual} \simeq \mf g^{\dual} \ops \mf{bir}_s^{\dual}$:
the former direct summand corresponds to the residue,
and the latter to the irregular part (cf.~Rmk.~\ref{rmk:residue}).

\begin{rema}
	\label{rmk:nilradicals_etc}

	By hypothesis,
	the Lie algebra $\mf g \simeq \mf g_s \slash \mf{bir}_s$ is reductive,
	and $\mf{bir}_s \sse \mf g_s$ is a nilpotent ideal:
	hence,
	it is the nilradical.
	Analogously,
	$\on{Bir}_s \sse G_s$ is the unipotent radical.
	It follows that $\exp \cl \mf{bir}_s \to \on{Bir}_s$ is bijective,
	and that the product of the latter is controlled by the Lie bracket of the former via the truncated Baker--Campbell--Hausdorff formula (= BCH~\cite{dynkin_1947_calculation_of_the_coefficients_in_the_campbell_hausdorff_formula}).
	In particular,
	geometrically,
	one can view $\on{Bir}_s$ as an affine space.
\end{rema}

\subsection{Formal gauge action}

The gauge group~\eqref{eq:formal_gauge_group} acts by pullback on connections,
on the \emph{right},
in affine fashion:
cf.,
e.g.,
\cite[\S~2.1]{fernandezherrero_reduction_theory_for_connections_over_the_formal_punctured_disc}.
We conclude this section of intrinsic definitions by giving an explicit formula for it.
This relies on one further choice (but cf.~\S~\ref{sec:coordinate_invariance}).

\subsubsection{}

Let $\varpi$ be a \emph{uniformizer for} $\ms O$.
Then there are isomorphisms of algebras
\begin{equation}
	\ms O \simeq \mb C \llb \varpi \rrb,
	\qquad
	\qquad \ms K \simeq \mb C (\!( \varpi )\!).
\end{equation}
Moreover,
there are $\mb C$-linear isomorphisms $\mf m^s \simeq \varpi^s \mb C \llb \varpi \rrb$ and $\mf m^{-s} \ms O \simeq \varpi^{-s} \mb C \llb \varpi \rrb$,
as well as
\begin{equation}
	\Omega^1_{\ms O} \simeq \mb C \llb \varpi \rrb \dif \varpi,
	\qquad
	\Omega^1_{\ms K} \simeq \mb C (\!( \varpi )\!) \dif \varpi,
	\qquad
	\mf m^{-s} \Omega^1_{\ms O} \simeq \varpi^{-s} \mb C \llb \varpi \rrb \dif \varpi,
\end{equation}
which intertwine the structures of $\ms O$- and $\mb C \llb \varpi \rrb$-modules.
As customary,
we write $\bm G (\ms O) = G \llb \varpi \rrb$,
and $\mf g (\ms O) = \mf g \llb \varpi \rrb$,
etc.
Connections $\wh{\mc A} \in \mf g(\!( \varpi )\!) \dif \varpi$ of pole order bounded by $s \geq 1$ can be uniquely written as $\wh{\mc A} = \mc A + \mc B$,
with
\begin{equation}
	\label{eq:explicit_connection_disc}
	\mc A
	= \sum_{i = 0}^{s-1} A_i \varpi^{-i-1} \dif \varpi,
	\qquad \mc B \in \mf g \llb \varpi \rrb \dif \varpi,
\end{equation}
for coefficients $A_0,\dc,A_{s-1} \in \mf g$.
Assume that $A_{s-1} \neq 0$,
whence it is the \emph{leading coefficient}.
We say that $\mc B$ is the \emph{nonsingular part of} $\wh{\mc A}$,
and decompose the principal part as
\begin{equation}
	\mc A
	= \mc A_0 + \dif Q,
	\qquad \mc A_0
	= \Lambda \varpi^{-1}\dif \varpi,
\end{equation}
where in turn
\begin{equation}
	\label{eq:explicit_principal_part_disc}
	\qquad \Lambda
	\ceqq A_0
	= \Res(\wh{\mc A}),
	\qquad Q
	\ceqq \sum_{i = 1}^{s-1} A_i \frac{\varpi^{-i}}{-i}.
\end{equation}

Finally,
in the isomorphism $\ms O_s \simeq \mb C \llb \varpi \rrb \slash \varpi^s \mb C\llb \varpi \rrb$,
we will also denote by $\varpi$ the nilpotent class of the uniformizer,
and at times use the corresponding $\varpi$-grading of $\mf{bir}_s \sse \mf g_s$.

\subsubsection{}

Then the `constant' gauge transformations act in $\varpi$-graded fashion:
\begin{equation}
	\label{eq:constant_gauge_action}
	\wh{\mc A}.g
	= \Ad_{g^{-1}} \bigl( \wh{\mc A} \, \bigr),
	\qquad g \in G.
\end{equation}
On the other hand,
given $\bm X \in \varpi\mf g\llb \varpi \rrb$,
the based gauge transformation $\bm h \ceqq e^{\bm X}$ reads
\begin{equation}
	\label{eq:based_gauge_action}
	\wh{\mc A}.\bm h
	= e^{-\ad_{\bm X}} \bigl( \wh{\mc A} \, \bigr) + \wt e^{\, \ad_{\bm X}} (d \bm X),
\end{equation}
setting
\begin{equation}
	\wt e^{\, \ad_{\bm X}}
	= \frac{1 - e^{-\ad_{\bm X}}}{\ad_{\bm X}}
	\ceqq \sum_{i \geq 0} \frac{ (-1)^i }{ (i+1)! } \ad_{\bm X}^i \in \End_{\mb C} \bigl( \mf g \llb \varpi \rrb \dif \varpi \bigr).
\end{equation}
(The point is the identity $\dif \, (e^{\bm X}) = e^{\bm X} \cdot \wt e^{\, \ad_{\bm X}} (\dif \bm X)$.)

\begin{rema}
	When a uniformizer is given,
	denote the based gauge group by $G_1 \llb \varpi \rrb \sse G \llb \varpi \rrb$,
	and set $\mf g_1 \llb \varpi \rrb \ceqq \varpi \mf g \llb \varpi \rrb = \Lie \bigl( G_1 \llb \varpi \rrb \bigr)$.
	The exponential map $\mf g_1 \llb \varpi \rrb \to G_1 \llb \varpi \rrb$ is bijective,
	and every element of the based gauge group can be \emph{uniquely} written as an infinite product of elementary transformations of the form $e^{X\varpi^i}$,
	for $X \in \mf g$ and for an integer $i > 0$ (cf.~\cite[\S~1.5]{babbitt_varadarajan_1983_formal_reduction_theory_of_meromorphic_differential_equations_a_group_theoretic_view}).

	Moreover,
	one can equip $\mf g (\!( \varpi )\!)$ with the standard $\varpi$-adic (ultra)metric.
	Via the exponential map,
	we regard the based gauge group as a topological group.\fn{
		We will \emph{not} use proalgebraic structures;
		the main point is that we consider $G \llb \varpi \rrb$-orbits which are controlled by their truncated analogues,
		and work with the latter.}~Transferring the $\varpi$-adic topology onto $\mf g(\!( \varpi )\!) \dif \varpi$,
	the action~\eqref{eq:based_gauge_action} is then \emph{continuous}.
	(We omit the proof of this fact.)
\end{rema}

\begin{rema}
	\label{rmk:residue}

	In principle,
	the duality of \S~\ref{sec:tcla_duality} matches up an element of $\mf g$ with a residue 1-form $\mc A_0 = \Lambda \varpi^{-1}\dif \varpi$---%
	as in~\eqref{eq:explicit_connection_disc}.
	But the residue $\Lambda$ is independent of $\varpi$,
	and we just view it as an element of $\mf g^{\dual}$.
	This does \emph{not} work for the coefficients of the irregular part.
	Nonetheless,
	when a uniformizer is given,
	the $\Ad_G^{\dual}$-stabilizer of a principal/irregular part matches up with the $\Ad_G$-stabilizer of its coefficients,
	and these Adjoint/coadjoint actions correspond to the (inverse) constant gauge action~\eqref{eq:constant_gauge_action}.
\end{rema}

\section{Normal forms/orbits}
\label{sec:classification_on_disc}

\subsection{Based gauge invariants}
\label{sec:formal_normal_forms}

Here we recall one main statement around the classification of untwisted connections on the trivial principal $G$-bundle over the disc.
This amounts to looking at certain $G_1 \llb \varpi \rrb$-orbits inside $\mf g (\!( \varpi )\!) \dif \varpi$;
cf.~again~\cite{babbitt_varadarajan_1983_formal_reduction_theory_of_meromorphic_differential_equations_a_group_theoretic_view},
as well as~\cite{babbitt_varadarajan_1989_local_moduli_for_meromorphic_differential_equations},
amongst many others.

\subsubsection{}

Analogously to Rmk.~\ref{rmk:residue},
note that~\eqref{eq:based_gauge_action} acts \emph{linearly} on principal parts.
It preserves those of pole order bounded by $s \geq 1$,
on which the subgroup $\exp \bigl( \varpi^s \mf g \llb \varpi \rrb \bigr) \sse G_1 \llb \varpi \rrb$ acts trivially.
Hence,
truncating the gauge action,\fn{
	And tacitly inverting it,
	for convenience,
	turning it into a \emph{left} action.}~a $G_1\llb \varpi \rrb$-orbit yields an $\Ad^{\dual}_{\on{Bir}_s}$-orbit in $\mf g_s^{\dual}$,
in the duality of \S~\ref{sec:tcla_duality} (cf.~\eqref{eq:tclg_sequence}).
Conversely,
in this text we consider connections whose based gauge orbit is determined by the Birkhoff orbit of their principal part,
as follows.
(We view this terminology as standard.)

\begin{defi}
	\label{def:nuts_connections}

	Let $\wh{\mc A}$ be a connection on the framed principal $G$-bundle $(\pi,0,g)$.
	Then:
	\begin{enumerate}
		\item
		      $\wh{\mc A}$ is \emph{untwisted} (a.k.a.~\emph{unramified}) if the based gauge orbit $G_1\llb \varpi \rrb.\wh{\mc A}$ contains an element $\wh{\mc A}' = (\mc A'_0 + \dif Q') + \mc B'$ such that the coefficients of $Q'$ commute with each other and are semisimple,
		      in the notation of~\eqref{eq:explicit_connection_disc}--\eqref{eq:explicit_principal_part_disc};

		\item
		      if this holds,
		      $\wh{\mc A}$ is \emph{semisimple} if moreover $\Lambda' = A'_0 \ceqq \Res \bigl( \wh{\mc A}' \bigr)$ is semisimple and commutes with (the coefficients of) $Q'$;

		\item
		      if both hold,
		      $\wh{\mc A}$ is \emph{nonresonant} if moreover $\ad_{\Lambda'}$ has no nonzero integer eigenvalues upon restriction to the $\ad$-stabilizer of (the coefficients of) $Q'$.
	\end{enumerate}
\end{defi}

\begin{defi}
	\label{def:uts_nuts_terminology}

	For short,
	framed UnTwisted Semisimple connections (resp.,
	Nonresonant such) are said to be \emph{UTS} (resp.,
	\emph{NUTS}).
	Moreover,
	if $\wh{\mc A}$ is NUTS:
	\begin{enumerate}
		\item
		      the principal part $\mc A' = \mc A'_0 + \dif Q'$ is the \emph{normal form of} $\wh{\mc A}$,
		      and---%
		      conversely---%
		      such principal parts are also said to be \emph{NUTS};

		\item
		      $Q'$ is the \emph{irregular type of} $\wh{\mc A}$;

		\item
		      and $\Lambda' \in \mf g$ is the \emph{normal residue of} $\wh{\mc A}$.
	\end{enumerate}
	More generally,
	if $\wh{\mc A}$ is UTS then $\mc A'$ is said to be its (UTS) \emph{normal principal part},
	and we use the same terminology for its \emph{irregular type} and \emph{normal residue}.
\end{defi}

\begin{rema}
	One can allow for nonresonant residues with nilpotent part.
	And one can enter into the twisted/ramified setting,
	where the normal forms involve formal Puiseux gauge transformations (again,
	cf.~\cite{boalch_yamakawa_2015_twisted_wild_character_varieties,		boalch_doucot_rembado_2025_twisted_local_wild_mapping_class_groups_configuration_spaces_fission_trees_and_complex_braids,		doucot_rembado_yamakawa_twisted_g_local_wild_mapping_class_groups} and references therein,
	e.g.,
	~\cite{balser_jurkat_lutz_1979_a_general_theory_of_invariants_for_meromorphic_differential_equations_i_formal_invariants,		malgrange_1983_sur_les_deformations_isomonodromiques_ii_singularites_irregulieres,		martinet_ramis_1991_elementary_acceleration_and_multisummability_i},
	etc.).

	On the contrary,
	Deff.~\ref{def:nuts_connections}--\ref{def:uts_nuts_terminology} include \emph{generic} connections,
	whose based gauge orbit contain an element with regular semisimple leading coefficient,
	which used to be the classical irregular-singular setup (on vector bundles~\cite{jimbo_miwa_ueno_1981_monodromy_preserving_deformation_of_linear_ordinary_differential_equations_with_rational_coefficients_i_general_theory_and_tau_function}).
	More generally,
	they include the case of `complete fission'~\cite{boalch_2009_through_the_analytic_halo_fission_via_irregular_singularities},
	i.e.,
	when the $\Ad_G$-stabilizer of (the coefficients of) $Q'$ is a maximal torus of $G$.
\end{rema}

\begin{rema}
	\label{rmk:very_good_connections}

	If the condition of Def.~\ref{def:nuts_connections} (1.)~holds,
	then up to based gauge one can assume that $\Lambda'$ commutes with $Q'$:
	the condition (2.)~is precisely that the projection of the residue on the $\ad_\mf g$-stabilizer of (the coefficients of) $Q'$ be semisimple.
	Moreover,
	the former is equivalent to the following:
	there exists a maximal torus $T \sse G$,
	with Lie subalgebra $\mf t \ceqq \Lie(T) \sse \mf g$,
	such that $Q' \in \mf t (\!( \varpi )\!) \bs \mf t \llb \varpi \rrb$.\fn{
		\label{fn:intrinsic_irregular_type}
		Again,
		beware that this description is coordinate-dependent,
		and that the intrinsic definition is rather as an element of $\mf t (\ms K \bs \ms O )$ (cf.~\cite[Def.~7.1]{boalch_2014_geometry_and_braiding_of_stokes_data_fission_and_wild_character_varieties});
		if one insists in \emph{not} fixing a maximal torus from the start,
		then one should consider $\mb C \llb \varpi \rrb^{\ts}$-orbits of elements $Q'$ having semisimple commuting coefficients (cf.~\S~\ref{sec:coordinate_invariance}).}
\end{rema}

\subsubsection{}

The main point here is that the normal form is a \emph{complete} invariant of NUTS based gauge orbits.
To state it precisely,
for any element $Q$ as in~\eqref{eq:explicit_principal_part_disc} denote by $\mf l_Q \sse \mf g$ the intersection of the $\ad_\mf g$-stabilizer subalgebras
\begin{equation}
	\mf g^{A_i} \ceqq \ker (\ad_{A_i}),
	\qquad i \in \set{1,\dc,s-1}.
\end{equation}

\begin{enonce}{Proposition-Definition}
	\label{prop:based_gauge_principal_bundle}

	Let $X''_s \sse \varpi^{-s} \mf g \llb \varpi \rrb \dif \varpi$ be the topological subspace of NUTS connections of pole order bounded by $s$,
	and denote by $\tau''_s \cl X''_s \to \mf g_s^{\dual}$ the map $\wh{\mc A} \mt \mc A'$ (taking the normal form).
	Then $\tau''_s$ is a trivializable topological principal $G_1 \llb \varpi \rrb$-bundle over its image,
	viz.,
	over the subspace
	\begin{equation}
		\label{eq:nuts_normal_forms}
		\mf g''_s \ceqq
		\Set{ \Lambda' \varpi^{-1}\dif \varpi + \dif Q' \in \mf g'_s | \coker \bigl( \ad_{\Lambda'}(Q',\kappa) \bigr) = 0 \text{ for } \kappa \in \mb Z \sm \set{0} },
	\end{equation}
	where,
	in turn:
	\begin{enumerate}
		\item
		      we let
		      \begin{equation}
			      \label{eq:uts_normal_forms}
			      \mf g'_s
			      \ceqq \Set{ \Lambda' \varpi^{-1}\dif \varpi + \dif Q' \in \mf g_s^{\dual} | \Lambda',A'_1,\dc,A'_{s-1} \text{ are semisimple and commute } };
		      \end{equation}

		\item
		      we set
		      \begin{equation}
			      \ad_{\Lambda'}(Q',\kappa)
			      \ceqq \bigl( \eval[1]{\ad_{\Lambda'}}_{\mf l_{Q'}} - \kappa \cdot \Id_{\mf l_{Q'}} \bigr) \in \mf{gl}_{\mb C}(\mf l_{Q'}),
			      \qquad \kappa \in \mb Z;
		      \end{equation}

		\item
		      and we identify $\mf g_s^{\dual}$ with the topological quotient $\varpi^{-s} \mf g \llb \varpi \rrb \dif \varpi \bs \mf g \llb \varpi \rrb \dif \varpi$.
	\end{enumerate}
\end{enonce}

\begin{proof}[Scheme of a proof]
	Besides the fact that based gauge orbits through NUTS connections meet~\eqref{eq:nuts_normal_forms} in one point,
	and only one,
	the statement also summarizes the well-known fact that the $G_1 \llb \varpi \rrb$-stabilizers of NUTS connections are trivial.

	As per the topology/trivialization,
	one can show that $\tau''_s$ is continuous,
	and that the restriction $\mf g''_s \ts G_1 \llb \varpi \rrb \to X''_s$ of the based-gauge-action map is a homeomorphism.
	By construction,
	it intertwines the canonical projection onto $\mf g''_s$ with $\tau''_s$.
\end{proof}

\begin{rema}
	\label{rmk:eigenvalues_symmetry}

	In~\eqref{eq:nuts_normal_forms},
	it would be equivalent to ask that
	\begin{equation}
		\coker \bigl( \ad_{\Lambda'}(Q',\kappa) \bigr)
		= 0,
		\qquad \kappa \in \mb Z_{> 0},
	\end{equation}
	as the eigenvalues of the adjoint action arise in opposite pairs (cf.~Rmk.~\ref{rmk:about_stability}).
\end{rema}

\subsection{About full gauge invariants}
\label{sec:formal_normal_orbits}

The upshot is that the based gauge orbits of NUTS connections are determined by the coadjoint Birkhoff orbit of their principal part,
and that the latter meets~\eqref{eq:nuts_normal_forms} in exactly one point (cf.~the first part of~\cite{calaque_felder_rembado_wentworth_2024_wild_orbits_and_generalised_singularity_modules_stratifications_and_quantisation}).

\subsubsection{}

Hereafter we thus focus on $\mf g_s^{\dual}$,
viewed as an affine Poisson variety.
To get to the unframed principal bundle,
one must act by the `constant' part~\eqref{eq:constant_gauge_action} of the full gauge action.
To this extent,
note that the subspaces $\mf g''_s \sse \mf g'_s \sse \mf g_s^{\dual}$ of~\eqref{eq:nuts_normal_forms}--\eqref{eq:uts_normal_forms} are $G$-invariant.
Therefore,
the splitting $G_s \simeq G \lts \on{Bir}_s$ and Prop.-Def.~\ref{prop:based_gauge_principal_bundle} together imply the following fact:
if an $\Ad^{\dual}_{G_s}$-orbit $\mc O' \sse \mf g_s^{\dual}$ intersects either of the subspaces~\eqref{eq:nuts_normal_forms}--\eqref{eq:uts_normal_forms},
then the intersection is precisely a $G$-orbit.

Then one can use the following variations of Deff.~\ref{def:nuts_connections}--\ref{def:uts_nuts_terminology}:

\begin{defi}
	\label{def:unframed_nuts_connections}

	A connection $\wh{\mc A}$ on $\pi$ is \emph{nonresonant/untwisted/semisimple} if either condition holds upon framing $\pi$ at $0$.
\end{defi}

\begin{defi}
	\label{def:unframed_nuts_connections_2}

	For short,
	UnTwisted Semisimple connections (resp.,
	Nonresonant such) are said to be \emph{UTS} (resp.,
	\emph{NUTS}).
	Moreover,
	if $\wh{\mc A}$ is NUTS,
	then upon choosing any frame of $\pi$:
	\begin{enumerate}
		\item
		      the \emph{normal orbit of} $\wh{\mc A}$ is the $G$-orbit of the normal principal part $\mc A'$;

		\item
		      the \emph{irregular class} $\Theta' = \Theta(Q')$ \emph{of} $\wh{\mc A}$ is the $G$-orbit of the irregular type $Q'$;

		\item
		      and the \emph{normal residue orbit of} $\wh{\mc A}$ is the $G$-orbit of the normal residue $\Lambda'$.
	\end{enumerate}
	More generally,
	if $\wh{\mc A}$ is UTS then the $G$-orbit of $\mc A'$ is said to be its \emph{normal principal orbit},
	and we use the same terminology for its \emph{irregular class} and \emph{normal residue orbit}.
\end{defi}

\begin{rema}
	If we choose a starting maximal torus $T \sse G$ (cf.~Rmk.~\ref{rmk:very_good_connections}),
	then we find the usual notion that untwisted/unramified irregular classes are $W$-orbits in the quotient $\mf t (\!( \varpi )\!) \bs \mf t \llb \varpi \rrb$~\cite[Rmk.~10.6]{boalch_2014_geometry_and_braiding_of_stokes_data_fission_and_wild_character_varieties} (cf.~\cite{doucot_rembado_2025_topology_of_irregular_isomonodromy_times_on_a_fixed_pointed_curve}).\fn{
		Therefore,
		a notion of unramified irregular classes which is both coordinate- and torus-free can be obtained by looking at the orbits of elements $Q'$ (with semisimple commuting coefficients) under the group $G \ts \mb C\llb \varpi \rrb^{\ts}$,
		noting that the actions of the two factors commute (cf.~Fn.~\ref{fn:intrinsic_irregular_type},
		and again \S~\ref{sec:coordinate_invariance}).}
\end{rema}

\subsubsection{}

In a corollary of Prop.-Def.~\ref{prop:based_gauge_principal_bundle},
taking the normal orbit yields a bijection
\begin{equation}
	X''_s \bs G \llb \varpi \rrb \lxra{\simeq} \mf g''_s \slash G.
\end{equation}
What matters here is that choosing an $\Ad^{\dual}_{G_s}$-orbit which intersects $\mf g''_s \sse \mf g_s^{\dual}$ is thus the same as prescribing a full gauge orbit.
As mentioned in the introduction \S~\ref{sec:intro},
the former can therefore be viewed as local pieces of (nonresonant) wild de Rham spaces.

Hereafter,
the $\Ad_{G_s}^{\dual}$-orbits $\mc O' \sse \mf g_s^{\dual}$ are regarded as symplectic algebraic varieties;
but recall that we want to quantize \emph{affine} ones.
To this end,
we use the following generalization of the analogous fact for standard semisimple orbits:

\begin{prop}
	\label{prop:affine_orbits}

	The UTS $\Ad^{\dual}_{G_s}$-orbits are closed \emph{affine} subvarieties of $\mf g_s^{\dual}$.
\end{prop}

\begin{proof}
	Choose $\mc A' \in \mf g'_s$ and let $\mc O' \ceqq G_s.\mc A'$.
	Recall~\cite[Prop.~2.12]{hiroe_yamakawa_2014_moduli_spaces_of_meromorphic_connections_and_quiver_varieties} that $\mc O'$ is symplectomorphic to the Hamiltonian reduction of an `extended' orbit $\wt{\mc O}' \sse G \ts \mf g_s^{\dual}$ (cf.~\eqref{eq:extended_orbit}),
	with respect to the free action of the $\Ad^{\dual}_G$-stabilizer $L' \ceqq L_{Q'} \sse G$ of the irregular type $Q'$,
	along the $\Ad^{\dual}_{L'}$-orbit of $\Lambda' \in \mf g \simeq \mf g^{\dual}$ (cf.~\eqref{eq:orbit_from_extended_orbit}).
	In turn,
	$\wt{\mc O}'$ is symplectomorphic to the direct product of $\on T^*\!G$ with the $\Ad^{\dual}_{\on{Bir}_s}$-orbit of the irregular part $\dif Q'$~\cite[Cor.~3.2]{yamakawa_2019_fundamental_two_forms_for_isomonodromic_deformations} (cf.~\eqref{eq:decoupling}),
	and so it is affine.
	Finally,
	by hypothesis,
	the residue orbit is also affine.
	Altogether,
	it follows that $\mc O'$ is the quotient of an affine variety by a free $L'$-action.
	Now~\cite[Thm.~4.2]{popov_vinberg_1994_invariant_theory} implies that this is a geometric quotient:
	by uniqueness of the categorical quotient~\cite{mumford_fogarty_kirwan_1994_geometric_invariant_theory},
	it must be the \emph{affine} one.

	Furthermore,
	let $\ol{\mc O'} \sse \mf g_s^{\dual}$ be the orbit closure,
	and suppose by contradiction that the complement $Z \ceqq \ol{\mc O'} \sm \mc O'$ is nonempty.
	Since $\mc O'$ is affine and open in $\ol{\mc O'}$,
	$Z$ has pure codimension 1.
	On the other hand,
	$Z$ is a finite union of $\Ad^{\dual}_{G_s}$-orbits,
	and so it has codimension greater than 1.
\end{proof}

\section{Quantization of coadjoint orbits in dual TCLAs}
\label{sec:quantum_orbits}

\subsection{Stabilizers and Levi/parabolic filtrations}

Here we recall and complement the construction of a deformation quantization of UTS orbits.

\subsubsection{}

Let $\mc O' \sse \mf g_s^{\dual}$ be an $\Ad^{\dual}_{G_s}$-orbit such that $\mc O' \cap \mf g'_s \neq \vn$:
under a further technical hypothesis (cf.~Rmk.~\ref{rmk:balanced_polarizations}),
the second part of~op.~cit.~considers `parabolically' induced $U(\mf g_s)$-modules,
which ultimately lead to a deformation quantization of the restriction of the Lie--Poisson bracket of $\mf g_s^{\dual}$ to the symplectic leaf $\mc O'$.
Please refer to~\cite[\S~16]{calaque_felder_rembado_wentworth_2024_wild_orbits_and_generalised_singularity_modules_stratifications_and_quantisation} for details,
and note that~op.~cit.~is rather phrased in the complex-analytic category.

\subsubsection{}
\label{sec:levi_and_parabolic_filtrations}

Choose a UTS principal part $\mc A' \in \mc O'$.
The $\Ad^{\dual}_{G_s}$-stabilizer $\bm L \ceqq G_s^{\mc A'} \sse G_s$ of $\mc A'$ is
\begin{equation}
	\label{eq:stabilizer_nuts_principal_parts}
	\bm L
	= L_s \lts \prod_{i = 1}^{s-1} \exp (\mf l_{s-i} \cdot \varpi^i),
\end{equation}
where,
in turn:
(i) $L_s = L_{\mc A'} \sse G$ is the $\Ad_G$-stabilizer of $\mc A'$;
and (ii)
\begin{equation}
	\mf l_i
	\ceqq \mf g^{A'_{s-1}} \cap \dm \cap \mf g^{A'_{s-i}} \sse \mf g,
	\qquad i \in \set{1,\dc,s-1}.
\end{equation}
Set also $\mf l_s \ceqq \Lie(L_s) \sse \mf g$:
then $\mf l_i$ is a reductive Lie subalgebra,
integrating to a connected reductive subgroup $L_i \sse G$,
for $i \in \set{1,\dc,s}$.

Finally,
denote by $\bm{\mf l} \ceqq \mf g_s^{\mc A'} \sse \mf g_s$ the $\ad^{\dual}_{\mf g_s}$-stabilizer of $\mc A'$,
i.e.,
the Lie algebra of~\eqref{eq:stabilizer_nuts_principal_parts}.

\begin{rema}
	\label{rmk:fission}

	Equivalently,
	consider the decreasing sequence of reductive subgroups of $G$ obtained from the nested $\Ad_G$-stabilizers of the coefficients of $\mc A'$,
	i.e.:
	\begin{equation}
		L_0
		\ceqq G,
		\qquad L_{i+1}
		= L_i^{A'_{s-i}}
		\ceqq \Set{ g \in L_i | \Ad_g(A'_{s-i}) = A'_{s-i} },
	\end{equation}
	for $i \in \set{1,\dc,s-1}$.
	(And then take their Lie algebras.)

	This is a.k.a.~the \emph{fission sequence of} $G$ determined by $\mc A'$~\cite{boalch_2009_through_the_analytic_halo_fission_via_irregular_singularities,boalch_2014_geometry_and_braiding_of_stokes_data_fission_and_wild_character_varieties}.
	Note that opp.~citt.~refer to the subsequence determined by the irregular type $Q'$ alone,
	excluding $L_s$;
	but in this text the residue centralizers also play an important role in light of the main Thm.~\ref{thm:main_result}.
\end{rema}

\subsubsection{}
\label{sec:polarizations}

Now choose also two decreasing sequences
\begin{equation}
	\label{eq:parabolic_sequence}
	G \eqqc P^\pm_0 \supseteq P^\pm_1 \supseteq \dm \supseteq P^\pm_s,
\end{equation}
of parabolic subgroups of $G$,
such that $P_i^+ \cap P_i^- = L_i$,
so that $L_i$ is a Levi factor of $P^\pm_i$.
The corresponding sequence of parabolic subalgebras $\mf p^\pm_i \ceqq \Lie(P^\pm_i) \sse \mf g$ determines two \emph{opposite polarizations} on $\mc O'$,
in the form of a $G_s$-invariant Lagrangian splitting of the tangent bundle
\begin{equation}
	\label{eq:polarizations}
	\on T \! \mc O'
	\simeq G_s \ts^{\bm L} ( \mf g_s \slash \bm{\mf l} ) \lra \mc O',
\end{equation}
in the usual identification $\mc O' \simeq G_s \slash \bm L$.
This involves the nested nilradicals of the parabolic subalgebras,
which must satisfy a further condition (cf.~Rmk.~\ref{rmk:balanced_polarizations}).

\subsection{Representation-theoretic setup}
\label{sec:representation_theory_quantization}

We now use the parabolic filtrations $\mf p^\pm_s \sse \dm \sse \mf p^\pm_1 \sse \mf g$ to define parabolically-induced Verma modules for TCLAs.

\subsubsection{}

By construction (and duality),
the coefficient $A'_{s-i}$ corresponds to a linear map on the Lie-centre $\mf Z(\mf l_i) \sse \mf l_i$,
extending to a character $\chi_i^\pm \cl \mf p^\pm_i \to \mb C$.
We assemble them into a character $\bm \chi^\pm$ of the Lie subalgebra $\bm{\mf p}^\pm \ceqq \bops_{i = 0}^{s-1} ( \mf p^\pm_{s-i} \cdot \varpi^i ) \sse \mf g_s$,
in $\varpi$-graded fashion:
\begin{equation}
	\bm\chi^\pm \cl \sum_{i = 0}^{s-1} X_i \varpi^i \lmt \sum_{i = 0}^{s-1} \chi^\pm_{s-i}(X_i),
	\qquad X_i \in \mf p^\pm_{s-i}.
\end{equation}

\begin{defi}[Cf.~\cite{chaffe_topley_2023_category_o_for_truncated_current_lie_algebras,calaque_felder_rembado_wentworth_2024_wild_orbits_and_generalised_singularity_modules_stratifications_and_quantisation}]
	\label{def:vermas}

	The balanced tensor products
	\begin{equation}
		\label{eq:vermas}
		M^\pm = M^{(s)}_{\bm{\mf p}^\pm,\bm \chi^\pm}
		\ceqq \Ind_{U(\bm{\mf p}^\pm)}^{U(\mf g_s)} (\bm\chi^\pm)
		= U(\mf g_s) \ots_{U(\bm{\mf p}^\pm)} (\bm \chi^\pm),
	\end{equation}
	are the \emph{Verma modules of highest/lowest weight} $\mc A'$,
	\emph{parabolic filtration} $\bm{\mf p}^\pm$,
	and \emph{truncation index} $s$.
\end{defi}

\begin{exem}
	The generic case correspond to $\mf l_1 \sse \mf g$ being a Cartan subalgebra.
	In this case $\mf p^\pm_s = \dm = \mf p^\pm_1$ are two opposite Borel subalgebras (cf.~\cite{wilson_2011_highest_weight_theory_for_truncated_current_lie_algebras,felder_rembado_2023_singular_modules_for_affine_lie_algebras_and_applications_to_irregular_wznw_conformal_blocks}).
	Somewhat conversely,
	if $s = 1$ then $M^\pm$ are two `opposite' standard parabolic $U(\mf g)$-Verma modules (cf.~\cite[Chp.~9]{humphreys_2008_representations_of_semisimple_lie_algebras_in_the_bgg_category_o}).
\end{exem}

\begin{rema}
	\label{rmk:vermas_are_quotients}

	It follows that $M^\pm$ are cyclically generated by the vectors
	\begin{equation}
		\label{eq:cyclic_vectors}
		v^\pm
		= v^\pm_{\bm{\mf p}^\pm,\bm \chi^\pm}
		\ceqq 1 \ots_{U\bm{\mf p}\pm} 1,
	\end{equation}
	on which $U\bm{\mf p}^\pm$ act by $\bm \chi^\pm$.
	This yields an alternative presentation of~\eqref{eq:vermas},
	as quotients of $U(\mf g_s)$ by the left submodules annihilating~\eqref{eq:cyclic_vectors}.

	Now $v^+$ is a \emph{highest-weight vector} in the sense of~\cite[\S~4.1]{chaffe_topley_2023_category_o_for_truncated_current_lie_algebras},
	and so $M^+$ is a \emph{highest-weight module} (cf.~\S~4.2 of op.~cit.).
	Moreover,
	being a quotient of a regular/non-parabolic Verma module of the same highest weight,
	$M^+$ lies in the category $\mc O^{(\chi^+_s,\dc,\chi^+_1)}$ of~\cite[\S~4.3]{chaffe_topley_2023_category_o_for_truncated_current_lie_algebras}.
	(Cf.~\cite{chaffe_2023_category_o_for_takiff_lie_algebras} when $s = 2$.)
	Analogously,
	$M^-$ is a \emph{lowest-weight module},
	generated by the \emph{lowest-weight vector} $v^-$,
	etc.
\end{rema}

\subsubsection{}

One also has a flag $\mf Z(\mf g) \sse \mf Z(\mf l_1) \sse \dm \sse \mf Z(\mf l_s)$,
and $M^\pm$ are weight modules for the largest Levi centre,
which acts in semisimple fashion.
Moreover,
there is a canonical antipode-contragredient bilinear form
\begin{equation}
	\label{eq:shapovalov}
	\mc S
	= \mc S_{\bm{\mf p}^\pm,\bm \chi^\pm} \cl M^- \ots M^+ \lra \mb C,
\end{equation}
such that $\mc S(v^- \ots v^+) = 1$,
which we call after Shapovalov~\cite{shapovalov_1972_a_certain_bilinear_form_on_the_universal_enveloping_algebra_of_a_complex_semisimple_lie_algebra}.
The simultaneous decomposition into $\mf Z(\mf l_s)$-weight spaces is $\mc S$-orthogonal,
and the left/right radicals of~\eqref{eq:shapovalov} coincide with the maximal proper submodules of $M^\pm$.
What matters is that the deformation quantization of $\mc O'$ involves the inverse tensor of~\eqref{eq:shapovalov} (cf.~\cite{alekseev_lachowska_2005_invariant_star_product_on_coadjoint_orbits_and_the_shapovalov_pairing}).
Concretely,
denoting as customary by $\hs$ a formal deformation parameter,
there is a natural $\ast$-product
\begin{equation}
	\label{eq:star_product}
	\ast \cl \mc R_0 \ots \mc R_0 \lra \mc R_0 \llb \hs \rrb,
	\qquad \mc R_0 \ceqq \mb C [\mc O'],
\end{equation}
on the commutative Poisson ring of regular functions.

Now the construction of~\eqref{eq:star_product} requires that $M^\pm$ is simple/irreducible for generic values of $\mc A'$,
i.e.,
that the Shapovalov form is nondegenerate.
To this extent,
the most direct link with meromorphic gauge theory seems to be the following:

\begin{prop}[cf.~\cite{calaque_felder_rembado_wentworth_2024_wild_orbits_and_generalised_singularity_modules_stratifications_and_quantisation}, Conj.~15.2.9]
	\label{prop:quantum_nonresonance}

	If $\mc A' \in \mf g''_s$,
	then the modules~\eqref{eq:vermas} are \emph{simple}.
\end{prop}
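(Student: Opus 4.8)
\emph{Strategy.} The plan is to convert simplicity of $M^\pm$ into nondegeneracy of the Shapovalov form $\mc S$ of~\eqref{eq:shapovalov}, and then to show that the nonresonance condition $\mc A'\in\mf g''_s$ keeps the highest/lowest weight off the locus where $\mc S$ degenerates. First I would invoke the fact already recorded after~\eqref{eq:shapovalov}: the left and right radicals of $\mc S$ coincide with the maximal proper submodules of $M^-$ and $M^+$. Hence $M^\pm$ are \emph{both} simple if and only if $\mc S$ is nondegenerate. Next I would set up the weight decomposition of $M^\pm$ with respect to $\mf h=\mf h\ots 1$ (or $\mf t$, after choosing $T\sse G$ with $\Lambda',A'_1,\dc,A'_{s-1}\in\mf t$): writing $M^+\simeq U(\bm{\mf u}^-)v^+$ for $\bm{\mf u}^-$ the opposite nilradical of $\bm{\mf p}^+$, each weight space is finite-dimensional because $\bm{\mf u}^-$ is spanned by finitely many root vectors $e_{-\beta}\ots\varpi^i$ with $\beta$ in a fixed positive cone; the $\mf h\ots\varpi^{>0}$ directions lie in $\bm{\mf p}^+$, so they contribute no extra directions to $\bm{\mf u}^-$. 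Since $\mc S$ pairs $M^-_\mu$ with $M^+_\mu$, it then suffices to prove that the ``Shapovalov determinant'' $\det\bigl(\mc S|_{M^-_\mu\ots M^+_\mu}\bigr)$ is nonzero for every weight $\mu$ occurring in $M^+$.

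\emph{Key structural input.} The crux is a Shapovalov-type determinant formula for the parabolically induced $U(\mf g_s)$-module $M^+$, refining the classical BGG formula (and extracted from, or running parallel to, the category-$\mc O$ machinery of~\cite{chaffe_topley_2023_category_o_for_truncated_current_lie_algebras}). Filtering $U(\mf g_s)$ by powers of the nilpotent ideal $\mf{bir}_s$ and exploiting the $\varpi$-grading, one computes the associated-graded pairing on the weight space $\mu$ as a product of Shapovalov-type contributions attached to the ``fission'' flag $\mf l_s\sse\dm\sse\mf l_1\sse\mf g$ of Rmk.~\ref{rmk:fission}. The point to extract is that the \emph{linear factors} of this determinant involve only the residual component $\Lambda'=A'_0$ of $\mc A'$: a factor attached to a positive root $\alpha$ and an integer $n\ge 1$ has the shape $\langle\Lambda'+\rho_\alpha,\alpha^\dual\rangle-n$, the higher coefficients $A'_1,\dc,A'_{s-1}$ entering only through the multiplicities. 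Moreover such a factor occurs only when $\alpha$ is a root of the Levi $\mf l_{Q'}=\mf l_{s-1}$: for $\alpha\in\Phi(\mf l_s)$ the Levi acts on $v^+$ through the $1$-dimensional character $\bm\chi^+$, so these directions produce only the ``expected'' parabolic singular vectors (the kernel of $M(\bm\chi^+)\thra M^+$) and no degeneracy; and for $\alpha\notin\Phi(\mf l_{s-1})$ there is some $j\ge 1$ with $\alpha(A'_j)\neq0$, and this nonzero residual-type shift at $\varpi$-level $j$ forces the corresponding factor to be nonzero. Thus the only potentially vanishing factors are indexed by positive roots $\alpha$ of $\mf l_{Q'}$.

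\emph{Matching with nonresonance.} With $\Lambda',A'_i\in\mf t$, the hypothesis $\mc A'\in\mf g''_s$ says exactly that $\ad_{\Lambda'}$ has no nonzero integer eigenvalue on $\mf l_{Q'}$, i.e. $\alpha(\Lambda')=\langle\Lambda',\alpha^\dual\rangle\notin\mb Z\sm\set{0}$ for every root $\alpha$ of $(\mf l_{Q'},\mf t)$. Combining this with the $\pm$-symmetry of the nonzero $\ad$-eigenvalues (Rmk.~\ref{rmk:eigenvalues_symmetry}) and the standard $\rho$-bookkeeping \emph{internal to} $\mf l_{Q'}$ — so that the relevant $\rho_\alpha$ above is the half-sum of positive roots of $\mf l_{Q'}$, not of $\mf g$ — one gets $\langle\Lambda'+\rho_\alpha,\alpha^\dual\rangle\notin\mb Z_{>0}$ for all positive roots $\alpha$ of $\mf l_{Q'}$. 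This is precisely the non-vanishing of every linear factor identified in the previous step, so $\det\bigl(\mc S|_{M^-_\mu\ots M^+_\mu}\bigr)\neq0$ for all $\mu$; hence $\mc S$ is nondegenerate and $M^\pm$ are simple.

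\emph{Main obstacle, and an alternative.} The hard part is the second step: organizing the $\mf{bir}_s$-filtration so as to obtain a genuine product formula for the Shapovalov determinant of a \emph{parabolically} induced TCLA Verma module, and in particular proving that the higher coefficients $A'_1,\dc,A'_{s-1}$ never contribute a vanishing factor — this is the representation-theoretic shadow of the classical fact (Prop.-Def.~\ref{prop:based_gauge_principal_bundle}) that only the $\ad_{\Lambda'}$-eigenvalues on $\mf l_{Q'}$ obstruct putting a connection in normal form. A cleaner, more structural route avoiding the explicit determinant would be to argue entirely inside the category $\mc O^{(\bm\chi^+)}$ of~\cite{chaffe_topley_2023_category_o_for_truncated_current_lie_algebras}: there $M^+$ is simple iff $M^+=L(\bm\chi^+)$, iff the integral Weyl group attached to the weight $\bm\chi^+$ — which, by nonresonance, is trivial on the $\mf l_{Q'}$-directions, the only ones not already neutralized by parabolic induction or by the nilpotent $\varpi$-directions — acts trivially on the weights occurring in $M^+$; invoking the linkage principle and translation functors in that category then upgrades the generic simplicity of~\cite{calaque_felder_rembado_wentworth_2024_wild_orbits_and_generalised_singularity_modules_stratifications_and_quantisation} to the whole nonresonant locus $\mf g''_s$.
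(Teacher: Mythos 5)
Your proposal takes a different and incomplete route. The central step of your main argument---a Shapovalov determinant formula for parabolically induced Verma modules over TCLAs, with linear factors depending only on the residue $\Lambda'$ and with the higher coefficients $A'_1,\dots,A'_{s-1}$ contributing only to multiplicities---is nowhere established, and you acknowledge as much (``the hard part''). For general truncation index $s$ and general parabolic filtration no such formula is available in the literature, so as written the argument does not go through. Your alternative sketch (``argue inside $\mc O^{(\bm\chi^+)}$, linkage principle, translation functors'') is closer in spirit to the paper but remains heuristic: it asserts an integral-Weyl-group criterion for simplicity that is not a proven statement in the TCLA category $\mc O$, and it does not say how the generic simplicity of the reference is to be ``upgraded'' to the whole nonresonant locus.

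What is missing is the key structural tool the paper uses: the chain of exact category equivalences built from the parabolic restriction functor of \cite[Thm.~4.1]{chaffe_topley_2023_category_o_for_truncated_current_lie_algebras} together with the character shift of Lem.~3.7 of op.~cit. These let one replace the depth-$s$ parabolic Verma module over $\mf g_s$ by a depth-$(s-1)$ parabolic Verma module over the TCLA on the smaller Levi $\mf l_1$, and then iterate; after $s-1$ steps one lands on a \emph{standard} parabolic Verma module for $U(\mf l_{s-1})$, where simplicity is governed by the classical Jantzen/BGG criterion (\cite[Thm.~9.12(a)]{humphreys_2008_representations_of_semisimple_lie_algebras_in_the_bgg_category_o}). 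Since these functors are equivalences, they \emph{explain} rather than merely assert that the higher coefficients $A'_1,\dots,A'_{s-1}$ drop out of the simplicity question: each is discarded at the corresponding stage of the recursion. At the last stage the nonresonance condition $\langle\alpha,\Lambda'\rangle\notin\mb Z\smallsetminus\{0\}$ for $\alpha$ a root of $\mf l_{Q'}=\mf l_{s-1}$ not in $\mf l_s$, combined with $\langle\rho,\Phi^\dual\rangle\subseteq\mb Z$, gives precisely the criterion. So your ``key structural input'' is replaced by a recursion over the fission flag that sidesteps any determinant formula; without that recursion the proof has a genuine gap.
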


\begin{proof}
	Let us write the modules of truncation index $s \geq 1$ as
	\begin{equation}
		M^\pm_s = M^{(s)}(\mf p^\pm_s,\dc,\mf p^\pm_1;\chi^\pm_s,\dc,\chi^\pm_1).
	\end{equation}

	The `parabolic restriction' functor of~\cite[Thm.~4.1]{chaffe_topley_2023_category_o_for_truncated_current_lie_algebras} takes $M^\pm_s$ to the parabolic Verma modules $\wt M^{\pm}_s = M^{(s)} \bigl( \, \wt{\mf p}^\pm_s,\dc,\wt{\mf p}^\pm_1;\wt\chi^\pm_s,\dc,\wt\chi_1^\pm \bigr)$,
	for the TCLA on the reductive Lie subalgebra ${\mf l}_1 \subseteq \mf g$,
	where
	\begin{equation}
		\wt{\mf p}^\pm_i
		\ceqq \mf p_i \cap \mf l_1,
		\qquad \wt\chi^\pm_i
		\ceqq \eval[1]{\chi_i^\pm}_{\wt{\mf p}^\pm_i},
		\qquad i \in \set{1,\dc,s}.
	\end{equation}
	(Note that $\wt{\mf p}^\pm_i$ are parabolic subalgebras of $\mf l_1$.)
	Applying the functor of Lem.~3.7 of op.~cit.~then yields the modules
	\begin{equation}
		\ul{\wt M}^\pm_s
		\ceqq M^{(s)} \bigl( \, \wt{\mf p}^\pm_s,\dc,\wt{\mf p}^\pm_2,\wt{\mf p}^\pm_1;\wt\chi^\pm_s,\dc,\wt\chi^\pm_2,0 \bigr).
	\end{equation}
	Since $\wt{\mf p}_1 = \mf l_1$,
	the latter are precisely the modules
	\begin{equation}
		\wt M_{s-1}^\pm = M^{(s-1)} \bigl( \, \wt{\mf p}^\pm_s,\dc,\wt{\mf p}^\pm_2;\wt\chi^\pm_s,\dc,\wt\chi^\pm_2 \bigr),
	\end{equation}
	for the $(s-1)$-truncated TCLA on ${\mf l}_1$,
	with an additional zero-action of ${\mf l}_1 \cdot \varpi^{s-1} \sse \mf g_s$.
	Since these functors are exact equivalences,
	the modules $M^\pm_s$ are simple if and only if $\wt M_{s-1}^\pm$ are simple.
	By induction on $s \geq 1$,
	$M^\pm_s$ are simple if and only this holds for
	\begin{equation}
		M_1^\pm = M^{(1)}_{{\mf p}^\pm_s \cap {\mf l}_{s-1},\chi^\pm_s},
	\end{equation}
	with tacit restriction of the characters.
	Moreover,
	it follows that $\mf l_s \sse \mf p^\pm_s \cap \mf l_{s-1}$ is a Levi factor,
	so that $\chi^\pm_s$ are determined by a linear functional $\lambda' \in \mf Z(\mf l_s)^{\dual}$.

	We conclude the proof in the highest-weight case.
	Choose a Cartan subalgebra $\mf t \sse \mf g$ such that $\mf t \sse \mf l_s$,
	and view $\lambda'$ as a linear function $\mf t \to \mb C$ vanishing on the subspace $\mf t \cap [\mf l_s,\mf l_s] \sse \mf t$.
	Denote by $\phi_s \sse \Phi$ the Levi subsystem determined by $\mf l_s$.
	Finally,
	let $\mf b \sse \mf g$ be a Borel subalgebra such that $\mf t \sse \mf b \sse \mf p^+_s$.
	Then~\cite[Thm.~9.12~(a)]{humphreys_2008_representations_of_semisimple_lie_algebras_in_the_bgg_category_o} implies that the Verma module $M_{\lambda'} = M^+_1$ is simple if
	\begin{equation}
		\label{eq:simplicity_verma}
		\Braket{ \lambda' + \rho, \alpha^{\dual} } \not\in \mb Z \sm \set{0},
		\qquad \alpha \in \Phi \sm \phi_s,
	\end{equation}
	where $\alpha^{\dual} \in \Phi^{\dual} \sse \mf t$ are the coroots and $\rho \in \mf t^{\dual}$ is the Weyl vector.
	But $\lambda' \in \mf t^{\dual}$ matches up with the normal residue $\Lambda' \in \mf t$ under the $G$-invariant duality used above,
	and the nonresonance condition of~\eqref{eq:nuts_normal_forms} can be rewritten as
	\begin{equation}
		\label{eq:cartan_nonresonance}
		\Braket{ \lambda',\alpha^{\dual}}
		= \Braket{ \alpha,  \Lambda' } \not\in \mb Z \sm \set{0},
		\qquad         \alpha \in \Phi \sm \phi_s.
	\end{equation}
	Finally,
	recalling that $\braket{ \rho, \Phi^{\dual} } \sse \mb Z$,
	one sees that~\eqref{eq:cartan_nonresonance} implies~\eqref{eq:simplicity_verma}.
\end{proof}

\begin{rema}
	\label{rmk:about_stability}

	The usual sufficient stability condition for $M_{\lambda'}$ rather reads
	\begin{equation}
		\label{eq:true_simplicity_verma}
		\Braket{ \lambda' + \rho,\alpha^{\dual} } \not\in \mb Z_{> 0},
		\qquad \alpha \in \Phi^+ \sm \phi_s,
	\end{equation}
	and it is thus \emph{weaker} than~\eqref{eq:simplicity_verma}.
	The fact is that the Weyl-invariant nonresonance condition~\eqref{eq:nuts_normal_forms} does \emph{not} depend on the choice of a system of positive roots,
	and conversely it implies that \emph{all} the induced Verma modules will be simple,
	regardless of the parabolic sequence containing $\mf t$.

	Nonetheless,
	recall that if $\lambda$' is regular then the wall-avoiding constraints~\eqref{eq:true_simplicity_verma} are also \emph{necessary} for $M_{\lambda'}$ to be simple (cf.~\cite[Thm~9.12~(b)]{humphreys_2008_representations_of_semisimple_lie_algebras_in_the_bgg_category_o}).
	In the proper parabolic case,
	instead,
	a sharp criterion is given in~\cite[Satz~4]{janzten_1977_kontravariante_formen_auf_induzierten_darstellungen_halbeinfacher_lie_algebren}.
\end{rema}

\subsection{Quantum comoments}

For later use,
we prove that the $\ast$-product admits a \emph{strong} quantum comoment map,
adapting a result of~\cite{calaque_naef_2015_a_trace_formula_for_the_quantization_coadjoint_orbits}.
(Cf.~again~\cite[\S~16]{calaque_felder_rembado_wentworth_2024_wild_orbits_and_generalised_singularity_modules_stratifications_and_quantisation}.)

\subsubsection{}

Concretely~\eqref{eq:star_product} is defined as follows on pure tensors $f \ots f' \in \mc R_0 \ots \mc R_0$:
(i) denote by $\wt f,\wt f' \in \mb C[G_s]$ the pullbacks of $f,f'$ along the fibre-bundle projection $G_s \thra \mc O'$;
(ii) apply to $\wt f \ots \wt f'$ the formal bidifferential operator $F_{\hs^{-1}}$,
obtained as the formal Taylor expansion of
\begin{equation}
	\label{eq:inverse_shapovalov}
	F_c
	= F_{c,\bm{\mf p}^\pm,\bm\chi^\pm}
	\ceqq \mc S^{-1}_{\bm{\mf p}^\pm,c\bm\chi^\pm},\fn{
		This element exists when the Verma modules of parameters $(\bm{\mf p}^\pm,c\bm\chi^\pm)$ are simple,
		thereby defining a meromorphic function on the $c$-plane:
		with poles at the degeneracy locus of the Shapovalov form,
		but always nonsingular at infinity.}
\end{equation}
as $c \in \mb C^{\ast}$ tends to infinity;
(iii) compose with the multiplication of $\mb C[G_s]$ to obtain a formal function on $G_s$;
and (iv) note that the latter descends to an element $f \ast f' \in \mc R_0 \llb \hs \rrb$,
in view of the invariance of the Shapovalov form.

\begin{rema}
	The fact that $F_1$ is well-defined,
	i.e.,
	that the starting Shapovalov form $\mc S$ is nondegenerate,
	is immaterial.
	Rather,
	it is sufficient that~\eqref{eq:inverse_shapovalov} is well-defined for all but countably many values of $c$,
	which follows from Prop.~\ref{prop:quantum_nonresonance}.
	Albeit the quantization detects nonresonance only `asymptotically',
	the latter is necessary on the semiclassical side:
	to identify the moduli of principal parts with the local moduli of meromorphic connections,
	cf.~\S~\ref{sec:unframed_de_rham}.
\end{rema}

\begin{rema}
	\label{rmk:balanced_polarizations}

	Consider the nilradicals $\mf u^\pm_i \sse \mf p^\pm_i$ of the parabolic subalgebras of \S~\ref{sec:representation_theory_quantization}.
	Then we also assume that the vector subspaces $\bm{\mf u}^\pm \ceqq \bops_{i = 0}^{s-1} (\mf u^\pm_{s-i} \cdot \varpi^i) \sse \mf g_s$ are Lie subalgebras,
	a condition which was referred to as having `balanced polarizations' in~\cite{calaque_felder_rembado_wentworth_2024_wild_orbits_and_generalised_singularity_modules_stratifications_and_quantisation}.
	This is used to expand the formal bidifferential operator $F_{\hs^{-1}}$,
	as follows:
	the choice of PBW bases of the Verma modules yields $U(\bm{\mf u}^{\mp})$-linear isomorphisms $M^\pm \simeq U(\bm{\mf u}^{\mp})$,
	and one can write
	\begin{equation}
		\label{eq:variable_separation}
		F_{\hs^{-1}}
		= \sum_{i \geq 0} F^{(i)}_{\hs^{-1}} \cdot \hs^i,
		\qquad F^{(i)}_{\hs^{-1}} \in U(\bm{\mf u}^-) \ots U(\bm{\mf u}^+),
	\end{equation}
	regarding as usual $U(\mf g_s)$ as the ring of left-invariant differential operators on $\mb C[G_s]$.

	(One can perhaps get rid of this requirement by a more extensive usage of PBW bases,
	considering $\Sym (\bm{\mf u}^\mp )$ in general.)
\end{rema}

\subsubsection{}

Now recall that the embedding $\mu = \mu_{\mc O'} \cl \mc O' \hra \mf g_s^{\dual}$ is a moment map for the $\Ad^{\dual}_{G_s}$-action,
which contravariantly yields a \emph{comoment map}
\begin{equation}
	\label{eq:classical_comoment}
	\mu^* \cl \Sym (\mf g_s) \lra \mc R_0,
	\qquad X \lmt \mu^*(X) = f_X \ceqq \eval[1]X_{\mc O'},
\end{equation}
for $X \in \Sym (\mf g_s) \simeq \mb C [\mf g_s^{\dual}]$.
Then~\eqref{eq:classical_comoment} generates the infinitesimal action of $\mf g_s$ on $\mc O'$,
via the following commutative triangle of Lie-algebra morphisms:
\begin{equation}
	\label{eq:hamiltonian_action}
	\begin{tikzcd}
		& \mc R_0 \ar{d}{\ad_{\mc R_0}} \\
		\mf g_s \ar{ru}{\mu^*} \ar{r} & \mf{der}(\mc R_0)
	\end{tikzcd}.
\end{equation}
A \emph{quantum comoment map} instead is a continuous $\mb C\llb \hs \rrb$-algebra morphism
\begin{equation}
	\label{eq:quantum_comoment}
	\wh\mu^* = \wh\mu^*_{\hs} \cl U_{\hs} (\mf g_s) \lra \wh{\mc R}_{\hs} \ceqq \bigl( \mc R_0 \llb \hs \rrb,\ast \bigr),
\end{equation}
where $U_{\hs} (\mf g_s)$ is the completed Rees algebra of $U(\mf g_s)$ (a.k.a.~the \emph{homogenized} universal enveloping algebra~\cite[Exmp.~2.6.2]{schedler_2012_deformations_of_algebras_in_noncommutative_geometry},
cf.~\cite{etingof_2007_calogero_moser_systems_and_representation_theory}).
Analogously to~\eqref{eq:hamiltonian_action},
this generates a $\mf g_s$-action on $\wh{\mc R}_{\hs}$;
and the latter is by definition a \emph{quantization} of the `semiclassical' action,
provided that there is a commutative square
\begin{equation}
	\label{eq:quantization_of_action}
	\begin{tikzcd}
		U_{\hs} (\mf g_s) \ar{r}{\wh{\mu}^*} \ar[two heads]{d} & \wh{\mc R}_{\hs} \ar[two heads]{d} \\
		\Sym (\mf g_s) \ar{r}[swap]{\mu^*} & \mc R_0
	\end{tikzcd},
\end{equation}
where the vertical arrows are the semiclassical limits.

But of course in our setting there is a natural `quantum' action,
viz.,
the infinitesimal version of the $G_s$-action on the coefficients of $\mc R_0$-valued formal power series.
Then the following statement implies in particular that~\eqref{eq:quantum_product} is $G_s$-invariant:

\begin{enonce}{Theorem-Definition}
	\label{thm:strong_quantum_comoment}

	Consider the continuous $\mb C\llb \hs \rrb$-algebra morphism $\wh\mu^* \cl U_{\hs}(\mf g_s) \to \wh{\mc R}_{\hs}$ extending
	\begin{equation}
		X \lmt f_X \cdot \hs^{-1} \in \wh{\mc R}_{\hs}[\hs^{-1}],
		\qquad X \in \mf g_s,
	\end{equation}
	in the notation of~\eqref{eq:classical_comoment}.
	Then:
	\begin{enumerate}
		\item
		      $\wh\mu^*$ is a \emph{strong} quantum comoment map generating the natural `quantum' $\mf g_s$-action;

		\item
		      and the corresponding square~\eqref{eq:quantization_of_action} commutes.
	\end{enumerate}
\end{enonce}

\begin{proof}
	The point is proving the identity
	\begin{equation}
		f_X \ast f' - f' \ast f_X \eqqcolon \bigl[ f_X,f' \bigr]_\ast = \hs \set{ f_X,f' },
		\qquad X \in \mf g_s,
		\quad g \in \mc R_0,
	\end{equation}
	for which one can generalize the proof of~\cite[Prop.~3.2]{calaque_naef_2015_a_trace_formula_for_the_quantization_coadjoint_orbits}.
	In some detail,
	the evaluation
	\begin{equation}
		\label{eq:evaluation_linear_function}
		\ev_X \cl \mf g_s^{\dual} \lra \mb C[G_s], \qquad \ev_X(\xi) \cl g \lmt \braket{ \Ad^{\dual}_g \xi, X},
	\end{equation}
	maps $\mc A'$ to the pullback $\wt f_X \in \mb C[G_s]$ of $f_X \in \mc R_0$.
	Moreover, the arrow~\eqref{eq:evaluation_linear_function} intertwines the $\ad^{\dual}_{\mf g_s}$-action on $\mc A'$ with the standard action of $\mf g_s$ on $\mb C[G_s]$.
	Hence,
	for all $c \in \mb C^\ast$ such that~\eqref{eq:inverse_shapovalov} is defined,
	and for all $\wt f' \in \mb C[G_s]$,
	one has
	\begin{align}
		F_c \bigl( \, \wt f' \ots \wt f_X \bigr) & = (1 \ots \ev_X) F_c \bigl( \, \wt f' \ots \mc A' \bigr)                                                           \\
		                                         & = \wt f' \ots \wt f_X - \frac 1 c \sum_j \bigl( Y_j.\wt f' \bigr) \ots X_j.\wt f_X \in \mb C[G_s] \ots \mb C[G_s],
	\end{align}
	in view of Lem.~\ref{lem:image_inverse_shapovalov}.
	Now take formal Taylor expansions in $\hs = c^{-1}$,
	and compose with the multiplication of $\mb C[G_s]$;
	this yields the $\bm L$-invariant formal function
	\begin{equation}
		\label{eq:stronger_quantum_comoment}
		m \circ F_{\hs^{-1}} \bigl( \, \wt f' \ots \wt f_X \bigr) = m \Bigl( \wt f' \ots \wt f_X - \hs \sum_j \bigl( Y_j.\wt f' \bigr) \ots X_j.\wt f_X \Bigr) \in \bigl( \mb C[G_s] \bigr) \llb \hs \rrb.
	\end{equation}
	Swapping the factors and projecting to the orbit yields
	\begin{equation}
		\bigl[ f',f_X \bigr]_{\! \ast} = - \hs \sum_j Y_j \wdg X_j \bigl( f' \ots f_X \bigr) = \hs \set{ f',f_X } \in \wh{\mc R}_{\hs},
	\end{equation}
	as $\Pi \ceqq \sum_j X_j \wdg Y_j \in \bm{\mf u}^+ \ots \bm{\mf u}^- \sse \bigwedge^2 \bigl( \mf g_s \slash \bm{\mf l} \bigr)$ is the Poisson bivector field.
\end{proof}

\begin{lemm}
	\label{lem:image_inverse_shapovalov}

	For any $c \in \mb C^\ast$ such that~\eqref{eq:inverse_shapovalov} is defined,
	let $v^+_c \ceqq v^+_{\bm{\mf p}^+,c\bm\chi^+}$ be the cyclic vector of the Verma module $M^+_c \ceqq M^+_{\bm{\mf p}^+,c\bm\chi^+}$.
	Denote also by $(Y_j)_j$ and $(X_j)_j$ two mutually-dual bases of $\bm{\mf u}^-$ and $\bm{\mf u}^+$,
	respectively,
	for the nondegenerate pairing $\bm{\mf u}^- \ots \bm{\mf u}^+ \to \mb C$ defined by the character $\bm \chi^+$.
	Then
	\begin{equation}
		\label{eq:image_inverse_shapovalov}
		F_c (v^+_c \ots \mc A') = v^+_c \ots \mc A' - \frac 1 c \sum_j ( Y_j v^+_c ) \ots \ad^{\dual}_{X_j} (\mc A') \in M^+_c \ots \mf g_s^{\dual},
	\end{equation}
	viewing $\mf g_s^{\dual}$ as a $U(\mf g_s)$-module for the $\ad^{\dual}_{\mf g_s}$-action.
\end{lemm}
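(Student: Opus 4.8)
The plan is to adapt the argument of~\cite[Prop.~3.2]{calaque_naef_2015_a_trace_formula_for_the_quantization_coadjoint_orbits} to the present truncated-current/parabolic setting. First I would recast the left-hand side of~\eqref{eq:image_inverse_shapovalov}. Using the balanced polarizations (Rmk.~\ref{rmk:balanced_polarizations}) and the ensuing $U(\bm{\mf u}^\mp)$-linear isomorphisms $M^\pm_c \simeq U(\bm{\mf u}^\mp)$, one realizes $F_c$ as a (completed) element of $U(\bm{\mf u}^-) \ots U(\bm{\mf u}^+)$ acting on $v^+_c \ots \mc A'$ through its left factor by the $U(\mf g_s)$-action on $M^+_c$ and through its right factor by the $\ad^{\dual}_{\mf g_s}$-action on $\mc A'$. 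Since $M^-_c = U(\bm{\mf u}^+) \cdot v^-_c$ \emph{freely} as a complex vector space, there is a well-defined $\bm{\mf u}^+$-equivariant linear map $\Psi_c \cl M^-_c \to \mf g_s^{\dual}$, sending $Q \cdot v^-_c \mt \ad^{\dual}_Q(\mc A')$ for $Q \in U(\bm{\mf u}^+)$, hence $v^-_c \mt \mc A'$; and then $F_c(v^+_c \ots \mc A') = (\Id_{M^+_c} \ots \Psi_c)(F_c)$, the right-hand side being computed in the completed tensor product and shown to converge weight space by weight space for the semisimple $\mf Z(\mf l_s)$-action (as in the footnote to~\eqref{eq:inverse_shapovalov}).

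Second, I would feed in the variable-separation expansion~\eqref{eq:variable_separation}, i.e.\ $F_c = \sum_{i \geq 0} c^{-i} F^{(i)}$ with $F^{(i)} \in U(\bm{\mf u}^-) \ots U(\bm{\mf u}^+)$. The term $F^{(0)} = 1 \ots 1$---equivalently, the leading term of $\mc S_c^{-1}$ is $v^+_c \ots v^-_c$---contributes $v^+_c \ots \mc A'$. For $F^{(1)}$ one computes the degree-one part of $\mc S_c$ on $(\bm{\mf u}^+ v^-_c) \ots (\bm{\mf u}^- v^+_c)$: up to the sign fixed by $\mc S(v^- \ots v^+) = 1$, it equals $c$ times the nondegenerate pairing $\bm{\mf u}^- \ots \bm{\mf u}^+ \to \mb C$ defined by $\bm\chi^+$, so $F^{(1)} = -\sum_j Y_j \ots X_j$ for the mutually-dual bases of the statement; applying $\Psi_c$ to $X_j v^-_c$ gives $\ad^{\dual}_{X_j}(\mc A')$---consistently with $\mc A'$ being $\mf Z(\mf l_s)$-fixed and with $\bm\chi^+$, hence $\mc A'$, vanishing on $\bm{\mf u}^+$---whence the $-\frac 1 c \sum_j (Y_j v^+_c) \ots \ad^{\dual}_{X_j}(\mc A')$ summand of~\eqref{eq:image_inverse_shapovalov}.

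Third---and this is the crux---I would show that $F^{(i)}(v^+_c \ots \mc A') = 0$ for all $i \geq 2$, equivalently that $\Psi_c$ annihilates the second-factor contributions of $F^{(i)}$ for $i \geq 2$. The approach is to run Calaque--Naef's recursion for the inverse Shapovalov form, by induction on $i$: the $\mc S_c$-orthogonality of the $\mf Z(\mf l_s)$-weight decomposition, together with the contravariance $\mc S_c(u \cdot m^- \ots m^+) = \mc S_c(m^- \ots \sigma(u) \cdot m^+)$ for the Chevalley-type anti-involution $\sigma$ of $\mf g_s$ adapted to $\bm{\mf p}^\pm$ and $\bm\chi^\pm$, expresses the higher dual-basis vectors of $F_c$ recursively through the lower ones and through $\ad^{\dual}_{\bm{\mf u}^+}$; then the cyclicity relations $\bm{\mf u}^+ v^+_c = 0$, $\bm{\mf u}^- v^-_c = 0$ and $\bigl( X - c \bm\chi^-(X) \bigr) v^-_c = 0$ for $X \in \bm{\mf l}$, combined with $\ad^{\dual}_{\bm{\mf l}}(\mc A') = 0$ and $\mc A'|_{\bm{\mf u}^+} = 0$, force the iterated coadjoint expressions that appear to telescope to zero. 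I expect this step to be the main obstacle: although the scheme is that of~op.~cit., the combinatorics must be reworked with the $\varpi$-adic grading of $\mf g_s$ and the possibly-nonabelian Levi factors $\mf l_s \sse \bm{\mf l}$, the completions $M^\pm_c \hra \wt M^\pm_c$ and the meromorphic dependence on $c$ have to be tracked in each weight space, and the normalization must be pinned down so that the overall sign and the factor $1/c$ come out exactly. As a fallback---close in spirit to the proof of Thm.-Def.~\ref{thm:strong_quantum_comoment}---one can instead exploit the $\mf g_s$-invariance of the canonical element $F_c$ of the contravariant form $\mc S_c$, which relates $(X \ots \Id)F_c$ and $(\Id \ots \sigma(X))F_c$; pushing this through $\Id \ots \Psi_c$ and evaluating on $v^+_c$ via $\bm{\mf u}^+ v^+_c = 0$ then determines $(\Id_{M^+_c} \ots \Psi_c)(F_c)$ and yields~\eqref{eq:image_inverse_shapovalov} directly.
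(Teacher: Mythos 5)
Your first step — defining $\Psi_c$ on $M^-_c \simeq U(\bm{\mf u}^+)$ by $Q v^-_c \mt \ad^{\dual}_Q(\mc A')$ — is exactly the paper's evaluation map $\ev_{\mc A'}$, and the observation that $F_c(v^+_c \ots \mc A') = (\Id \ots \Psi_c)(F_c)$ is the correct starting point. The identification of the $F^{(0)}$- and $F^{(1)}$-contributions with the two terms of the right-hand side of~\eqref{eq:image_inverse_shapovalov} is also plausible (modulo signs that would need pinning down).

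However, your declared \emph{main} route has a genuine gap: you propose to show $F^{(i)}(v^+_c \ots \mc A') = 0$ for each $i \geq 2$ by unwinding Calaque--Naef's recursion for $\mc S_c^{-1}$. Two problems. First, the statement is not quite well posed: the coefficients $F^{(i)}_{\hs^{-1}}$ of~\eqref{eq:variable_separation} are themselves $\hs$-dependent, so ``$F^{(i)}$ vanishes for $i\geq 2$'' is not a meaningful decomposition of the target power series in $1/c$ — only the aggregate $O(c^{-2})$ tail need vanish. Second, even taken in that corrected sense, chasing the recursion for the inverse Shapovalov form through nonabelian Levi factors and $\varpi$-graded nilradicals would require tracking a sprawling family of identities; you flag this honestly as ``the main obstacle,'' but no mechanism is offered that would make the iterated $\ad^{\dual}_{\bm{\mf u}^+}$-expressions telescope. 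This is not the route the paper takes.

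Your ``fallback'' is in the right spirit but omits the one reduction that makes the proof short. The paper does not try to compute $(\Id \ots \Psi_c)(F_c)$ term by term. It instead observes: (i) $F_c$ is $\mf g_s$-invariant, so its image under the $U(\bm{\mf u}^+)$-linear map $\Id \ots \Psi_c$ is $\bm{\mf u}^+$-invariant in $M^+_c \ots \mf g_s^{\dual}$ (for the coproduct action); (ii) a $\bm{\mf u}^+$-invariant element of this tensor product is \emph{uniquely determined} by its zeroth $\mf Z(\mf l_s)$-weight component, which here is $v^+_c \ots \mc A'$; (iii) therefore it suffices to check that the explicit right-hand side of~\eqref{eq:image_inverse_shapovalov} is $\bm{\mf u}^+$-invariant. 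That check is a finite computation: applying $\Delta(X_j)$ to both summands and using $[X_j,Y_i] v^+_c = c\,\delta_{ji} v^+_c + \sum_k c^k_{ji} Y_k v^+_c$, everything reduces to the coadjoint identity
\begin{equation}
	\bigl( \ad^{\dual}_{X_j} \ad^{\dual}_{X_k} + \textstyle\sum_i c^k_{ji} \ad^{\dual}_{X_i} \bigr) \mc A' = 0 \in \mf g_s^{\dual},
\end{equation}
which is verified by pairing against the basis $Y_l$ of $\bm{\mf u}^-$, using $\ad^{\dual}_{\bm{\mf l}}(\mc A') = 0$ and $\mc A'|_{\bm{\mf p}^+} = \bm\chi^+$. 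This uniqueness argument is precisely what makes the higher-order analysis you worried about unnecessary; without it your proposal does not close.
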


\begin{proof}
	Postponed to \S~\ref{proof:lem_image_inverse_shapovalov}.
\end{proof}

\begin{rema}
	\label{rmk:no_cartan_quantum}

	Outside of proofs we phrased everything with \emph{no} Cartan subalgebra $\mf t \sse \mf g$.
	Choosing one,
	$\mc A'$ can be regarded as a tuple of linear functions on $\mf t$,
	lying in a root-valuation stratum of $(\mf t^{\dual})^s \simeq \mf t^s$ determined by the nested Levi subsystems of roots which annihilate the irregular tail at each step (cf.~\cite{yamakawa_2019_fundamental_two_forms_for_isomonodromic_deformations,doucot_rembado_tamiozzo_2022_local_wild_mapping_class_groups_and_cabled_braids,		calaque_felder_rembado_wentworth_2024_wild_orbits_and_generalised_singularity_modules_stratifications_and_quantisation},
	Rmk.~\ref{rmk:fission},
	and the proof of Prop.~\ref{prop:flatness_high_order}).
\end{rema}

\subsection{About canonicity}
\label{sec:using_marking_2}

Besides a UTS $\Ad_{G_s}^{\dual}$-orbit $\mc O' \sse \mf g_s^{\dual}$,
in this section we made two further related choices:
(i) a point $\mc A' \in \mc O' \cap \mf g'_s$,
a.k.a.~a \emph{marking of} $\mc O'$;
and (ii) a sequence of parabolic subalgebras of $\mf g$.
About the former:

\begin{lemm}
	\label{lem:no_orbit_marking}

	For any given parabolic sequence $\mf p^\pm_s \sse \dm \sse \mf p^\pm_1$,
	the above deformation quantization of $\mc R_0$ is \emph{independent} of the choice of marking.
\end{lemm}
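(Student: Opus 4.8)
The claim is that, for a fixed opposite parabolic sequence $\mf p^\pm_s \sse \dm \sse \mf p^\pm_1$, the $\ast$-product on $\mc R_0 = \mb C[\mc O']$ does not depend on the choice of marked point $\mc A' \in \mc O' \cap \mf g'_s$. The plan is to reduce everything to the $G_s$-equivariance of the construction, exploiting the transitivity of the $\Ad^{\dual}_{G_s}$-action on $\mc O' \cap \mf g'_s$.

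\textbf{Step 1: Transitivity on the set of markings.} First I would recall that, by Lem.~\ref{lem:g_invariance_normal_principal_parts} together with the discussion following it, the intersection $\mc O' \cap \mf g'_s$ is a single $G$-orbit (in fact a $G$-orbit, not just a $G_s$-orbit); in particular any two markings $\mc A'$ and $\mc A'' = \Ad^{\dual}_g(\mc A')$ differ by some $g \in G \sse G_s$. This is the geometric input that makes the independence plausible: changing the marking is the same as acting by a group element that already acts on the whole picture.

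\textbf{Step 2: Naturality of the representation-theoretic data under $G$.} Next I would track how each ingredient of the construction transforms when $\mc A'$ is replaced by $\Ad^{\dual}_g(\mc A')$ for $g \in G$. The stabilizer $\bm L$ is conjugated to $g\bm L g^{-1}$; the Levi factors $L_i$ to $gL_ig^{-1}$; but the \emph{chosen} parabolic sequence $\mf p^\pm_i$ is held fixed by hypothesis, so I must instead compare the two constructions using the \emph{same} parabolics but shifted characters. The key observation is that the character $\bm\chi^\pm$ is built from the coefficients of $\mc A'$ viewed as linear functionals on $\mf Z(\mf l_i)$; since $g$ conjugates these centres and translates the functionals accordingly, one gets a $U(\mf g_s)$-module isomorphism $M^\pm_{\mc A'} \xra{\simeq} M^\pm_{\mc A''}$ intertwining the two Shapovalov forms, compatibly with the cyclic vectors. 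Consequently the inverse-Shapovalov bidifferential operators $F_{\hs^{-1}}$ agree, as $G_s$-invariant elements of the appropriate completed tensor square of $U(\mf g_s)$, under the identification induced by $g$.

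\textbf{Step 3: Comparing the two $\ast$-products on $\mc R_0$.} Finally I would push the operator $F_{\hs^{-1}}$ down to the orbit in both pictures. In the definition of $f \ast f'$ one pulls $f, f'$ back to $\mb C[G_s]$ along $G_s \thra \mc O' \simeq G_s/\bm L$, applies $F_{\hs^{-1}}$ as a left-invariant bidifferential operator, and descends. Changing the marking changes the trivialization of the fibration $G_s \thra \mc O'$ by right-translation by $g$, which is an isometry of the left-invariant differential calculus; combined with the $G_s$-invariance (in particular left-invariance) of $F_{\hs^{-1}}$ established in Step 2, the two descended products on $\mc R_0 \llb \hs \rrb$ coincide. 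I expect the main obstacle to be bookkeeping in Step 2: making precise, in the \emph{completed} modules $M^\pm \hra \wt M^\pm$ and with the dilation parameter $c$, that conjugation by $g \in G$ really does carry $F_{c,\bm{\mf p}^\pm,c\bm\chi^\pm}$ to $F_{c,\bm{\mf p}^\pm,c\bm\chi'^\pm}$ on the nose (not merely up to a $c$-dependent scalar), using that $\mc S(v^- \ots v^+) = 1$ is preserved; everything else is then a formal consequence of $G_s$-invariance.
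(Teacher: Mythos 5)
Your Step 2 contains a genuine gap: there is no $U(\mf g_s)$-module isomorphism $M^\pm_{\mc A'} \lxra{\simeq} M^\pm_{\mc A''}$ sending cyclic vector to cyclic vector. Indeed such a map $\phi$ would force $\bm\chi^+_{\mc A'}(X)\,\phi(v^+) = \phi(Xv^+) = X\phi(v^+) = \bm\chi^+_{\mc A''}(X)\,\phi(v^+)$ for all $X \in \bm{\mf p}^+$, i.e.\ equality of the highest weights, which fails as soon as $\mc A'' \neq \mc A'$. Concretely, for $G = \SL_2(\mb C)$ and $s = 1$ the two admissible markings are $\Lambda'$ and $-\Lambda'$, whose Verma modules have opposite highest weights $\pm\lambda'$ and are not isomorphic. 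The relation that does exist is a $g$-\emph{twisted} isomorphism $M^\pm_{\mc A',\mf p^\pm} \simeq M^\pm_{\mc A'',g\mf p^\pm g^{-1}}$ (linear over $\Ad_g$, not over $U(\mf g_s)$), and this further requires conjugating the parabolic sequence rather than ``holding it fixed'' as you propose — for a general $g$ normalizing only the Levi factors $L_i$, the parabolics $g\mf p^\pm_i g^{-1}$ are neither $\mf p^+_i$ nor $\mf p^-_i$. This cascades into Step 3 as well: right translation by $g$ does not leave the left-invariant differential calculus on $\mb C[G_s]$ pointwise fixed, it conjugates left-invariant operators by $\Ad_{g^{-1}}$, so the bookkeeping with the bidifferential operator $F_{\hs^{-1}}$ is substantially more delicate than you indicate.

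The paper avoids all of this by a different, much shorter argument: it observes that the construction yields a $G_s$-invariant $\ast$-product admitting a strong quantum comoment map (Thm.-Def.~\ref{thm:strong_quantum_comoment}) and exhibiting separation of variables with respect to the two opposite $G_s$-invariant polarizations coming from the nested nilradicals $\mf u^\pm_i \sse \mf p^\pm_i$ (via the expansion~\eqref{eq:variable_separation}), and then invokes the uniqueness theorem of Calaque--Naef~\cite[Prop.~3.6]{calaque_naef_2015_a_trace_formula_for_the_quantization_coadjoint_orbits}: there is at most one such $\ast$-product. Both markings produce a $\ast$-product with these properties, hence they coincide. A direct proof along your lines is perhaps salvageable by conjugating the parabolics together with the marking and carefully cancelling the resulting $\Ad_g$-twists against the right translation $R_g^*$ of the trivialization $G_s \thra \mc O'$; but the uniqueness route sidesteps this bookkeeping entirely, which is precisely its virtue.
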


\begin{proof}
	We constructed a $G_s$-invariant $\ast$-product with a strong quantum comoment,
	and with \emph{separation of variables} with respect to the polarizations determined by the sequences of nested nilradicals $\mf u_i^\pm \sse \mf p^\pm_i$:
	the latter follows immediately from the expansion~\eqref{eq:variable_separation},
	cf.~\cite[\S~2.2 + Rmk.~3.5]{calaque_naef_2015_a_trace_formula_for_the_quantization_coadjoint_orbits},
	and the conclusion then follows from Prop.~3.6 of op.~cit.
\end{proof}

\subsubsection{}

Thus,
after establishing the coordinate independence of \S~\ref{sec:coordinate_invariance},
a parabolic sequence makes it possible to \emph{canonically} quantize a coadjoint UTS orbit $\mc O' \sse \mf g_s^{\dual}$.
This arises from the formal germ of an untwisted/unramified meromorphic connection,
at a marked point of a (wild) Riemann surface,
which is our underlying viewpoint.

On the contrary,
the dependence upon the choice of polarization hinges on the parabolic sequence,
and we shall \emph{not} discuss this further in this text.

\section{Coordinate independence}
\label{sec:coordinate_invariance}

\subsection{Changing uniformizer}

Here we clarify which of the previous definitions/constructions is intrinsically determined by a triple $(\bm G,\ms O,s)$,
consisting of:
(i) a connected reductive algebraic group;
(ii) a complete DVR with residue/coefficient field $\mb C$;
and (iii) a positive integer.

\subsubsection{}
\label{sec:ring_automorphisms}

The set of uniformizers is a torsor for the group of continuous algebra automorphisms of $\mb C \llb \varpi \rrb$,
which will be identified with the units $\mb C \llb \varpi \rrb^{\ts} \sse \mb C \llb \varpi \rrb$:
an automorphism $F$ is determined by the image $F(\varpi) = \varpi f \in \varpi \mb C \llb \varpi \rrb$,
for a unique element $f = f(F)$ such that $f(0) \neq 0 \in \mb C$.

Then let $\mb C \llb \varpi \rrb^{\ts}$ act on $\mb C (\!( \varpi )\!) \dif \varpi$,
on the \emph{right},
by pullback:
\begin{equation}
	\label{eq:pullback_action}
	\bigl( \, \wt f \dif \varpi \bigr).F
	= F^* \bigl( \, \wt f \dif \varpi \bigr)
	\ceqq \bigl( \, \wt f \circ F(\varpi) \bigr) \cdot (f + \varpi f') \dif \varpi,
	\qquad \wt f \in \mb C (\!( \varpi )\!).
\end{equation}
This action extends in bilinear fashion to pole-order-bounded principal parts.
In the identifications/dualities of~\S~\ref{sec:tcla_duality},
we thus find a Poisson action on $\mf g_s^{\dual}$,
mapping $\Ad^{\dual}_{G_s}$-orbits onto each other and fixing the subspace $\mf g^{\dual}$ (of residues) pointwise.
Then:

\begin{lemm}
	\label{lem:coordinate_invariance}

	\leavevmode

	\begin{enumerate}
		\item
		      The subspaces $\mf g''_s \sse \mf g'_s \sse \mf g_s^{\dual}$ of~\eqref{eq:nuts_normal_forms}--\eqref{eq:uts_normal_forms} are $\mb C \llb \varpi \rrb^{\ts}$-invariant.

		\item
		      And if $\wt{\mc A}'$ lies in the $\mb C \llb \varpi \rrb^{\ts}$-orbit of $\mc A' \in \mf g'_s$,
		      then the sequence $\mf l_s \sse \dm \sse \mf l_1 \sse \mf g$ of reductive subalgebras determined by $\wt{\mc A}'$ and $\mc A'$---%
		      as in \S~\ref{sec:levi_and_parabolic_filtrations}---%
		      coincide.

	\end{enumerate}
\end{lemm}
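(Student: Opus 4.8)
The plan is to make the $\mb C \llb \varpi \rrb^{\ts}$-action of \eqref{eq:pullback_action} explicit on a principal-part representative in the chosen coordinate,
	and to observe that it acts on the coefficients of a normal form by an invertible ``upper-triangular'' transformation.
	Fixing $F \in \mb C \llb \varpi \rrb^{\ts}$,
	writing $F(\varpi) = \varpi f$ with $c \ceqq f(0) \in \mb C^{\ts}$,
	the starting computation is
	\begin{equation*}
		F^*\bigl( \varpi^{-i-1}\dif \varpi \bigr)
		= (\varpi f)^{-i-1}\dif(\varpi f)
		= \varpi^{-i-1} u_i(\varpi)\dif\varpi,
		\qquad u_i
		\ceqq f^{-i-1}(f + \varpi f') \in \mb C \llb \varpi \rrb,
	\end{equation*}
	where $u_i$ is a unit since $u_i(0) = c^{-i} \neq 0$.
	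Hence,
	for $\mc A' = \sum_{i = 0}^{s-1} A'_i \varpi^{-i-1}\dif\varpi \in \mf g_s^{\dual}$,
	deleting the nonsingular part of each $F^*\bigl( A'_i\varpi^{-i-1}\dif\varpi \bigr)$ yields a principal-part representative $F^*\mc A' \equiv \sum_{j = 0}^{s-1} B'_j \varpi^{-j-1}\dif\varpi$ with $B'_j = \sum_{i = j}^{s-1} (u_i)_{i-j}\, A'_i$,
	where $(u_i)_k$ denotes the $\varpi^k$-coefficient of $u_i$.
	Two formal facts follow:
	for $j \in \set{1,\dc,s-1}$ the element $B'_j$ is a $\mb C$-linear combination of $A'_j,\dc,A'_{s-1}$ in which $A'_j$ has the \emph{nonzero} coefficient $(u_j)_0 = c^{-j}$;
	and $B'_0 = \Lambda'$,
	since the $\varpi^{-1}\dif\varpi$-coefficient of $F^*\bigl( A'_i\varpi^{-i-1}\dif\varpi \bigr)$ is its residue,
	hence equals $\Res\bigl( A'_i\varpi^{-i-1}\dif\varpi \bigr)$ by coordinate-independence of residues---
	i.e.\ $\Lambda'$ for $i = 0$ and $0$ for $i \geq 1$,
	in accordance with the fact (recalled just before the statement) that the $\mb C \llb \varpi \rrb^{\ts}$-action fixes $\mf g^{\dual}$ pointwise.

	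Next I would deduce the invariance of \eqref{eq:uts_normal_forms}.
	If $\Lambda',A'_1,\dc,A'_{s-1}$ are semisimple and pairwise commute,
	they lie in a common Cartan subalgebra of $\mf g$ (their span is a toral subalgebra of the reductive Lie algebra $\mf g$);
	then every $\mb C$-linear combination of them is again semisimple and commutes with the others,
	so each $B'_j$ is,
	whence $F^*\mc A' \in \mf g'_s$.
	Thus $F^*$ maps $\mf g'_s$ into $\mf g'_s$ for every $F$,
	and---
	$\mb C \llb \varpi \rrb^{\ts}$ acting by group automorphisms---
	$\mf g'_s$ is $\mb C \llb \varpi \rrb^{\ts}$-invariant.
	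In particular,
	if $\wt{\mc A}'$ is in the $\mb C \llb \varpi \rrb^{\ts}$-orbit of $\mc A' \in \mf g'_s$,
	then both $\mc A'$ and $\wt{\mc A}'$ lie in $\mf g'_s$;
	writing $\wt{\mc A}' = F^*\mc A'$ (which we may assume, as the action is by a group),
	the Levi recipe of \S~\ref{sec:levi_and_parabolic_filtrations} applies to both.

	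Now,
	for each $j \in \set{1,\dc,s-1}$ the tuple $(B'_{s-1},\dc,B'_j)$ is obtained from $(A'_{s-1},\dc,A'_j)$ by an invertible (upper-triangular, diagonal entries $c^{-k} \neq 0$) matrix,
	hence spans the same complex subspace of $\mf g$;
	and since $[\cdot,\cdot]$ is $\mb C$-bilinear,
	the simultaneous centraliser in $\mf g$ of a finite subset depends only on its linear span.
	Therefore $\mf g^{B'_{s-1}} \cap \dm \cap \mf g^{B'_{s-i}} = \mf g^{A'_{s-1}} \cap \dm \cap \mf g^{A'_{s-i}}$ for $i \in \set{1,\dc,s-1}$;
	writing $\wt{\mf l}_\bullet$ and $\mf l_\bullet$ for the subalgebras attached to $\wt{\mc A}'$ and $\mc A'$,
	this reads $\wt{\mf l}_i = \mf l_i$ for $i \leq s-1$,
	and then $\wt{\mf l}_s = \mf g^{B'_0} \cap \wt{\mf l}_{s-1} = \mf g^{\Lambda'} \cap \mf l_{s-1} = \mf l_s$ (using $B'_0 = \Lambda'$),
	so the two filtrations coincide.
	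Finally,
	the nonresonance condition of \eqref{eq:nuts_normal_forms} for $\wt{\mc A}'$ constrains $\ad_{\Lambda'}$ restricted to $\mf l_{\wt Q'} = \wt{\mf l}_{s-1} = \mf l_{s-1} = \mf l_{Q'}$,
	shifted by $\kappa \in \mb Z$;
	since $B'_0 = \Lambda'$,
	this is literally the operator $\ad_{\Lambda'}(Q',\kappa)$ attached to $\mc A'$,
	so its cokernel vanishes for $\wt{\mc A}'$ iff it does for $\mc A'$,
	and $\mf g''_s$ is $\mb C \llb \varpi \rrb^{\ts}$-invariant as well.
	I do not expect a genuine obstacle: the argument rests on the bookkeeping of the first paragraph---
	the only mildly delicate point being that the residue ($j = 0$) coefficient decouples from the others---
	followed by elementary linear algebra and the remark that a simultaneous centraliser depends only on a span.
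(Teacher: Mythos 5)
Your proof is correct and follows essentially the same route as the paper's: compute the pullback on a coordinate representative, observe that the coefficients transform by an invertible upper-triangular matrix while the residue is fixed (you appeal to coordinate-independence of residues; the paper equivalently notes that $F^*(\dif Q')$ is exact hence residue-free), deduce invariance of $\mf g'_s$ from semisimplicity/commutativity of linear combinations in a common Cartan, recover the Levi filtration from the fact that simultaneous centralizers depend only on spans, and observe that the nonresonance operator $\ad_{\Lambda'}|_{\mf l_{Q'}}$ is literally unchanged. The only detail worth noting: the paper's displayed coefficient $\wt f_i = f^{-i}(f+\varpi f')$ appears to be an off-by-one typo for $f^{-i-1}(f+\varpi f')$, which your $u_i$ gets right; this is immaterial since only the nonvanishing of $\wt f_i(0)$ is used.
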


\begin{proof}[Proof postponed to~\ref{proof:lem_coordinate_invariance}]
\end{proof}

\subsubsection{}

It follows from Lem.~\ref{lem:coordinate_invariance} that the spaces of UTS/NUTS connections of Deff.~\ref{def:nuts_connections} +~\ref{def:unframed_nuts_connections} admit an invariant description.
Namely,
define subspaces $\mf g''_s \sse \mf g'_s \sse \mf g \bigl( \Omega^1_{\ms K} \slash \Omega^1_{\ms O} \bigr)$ of pole-order-bounded principal parts by requiring that they coincide with~\eqref{eq:nuts_normal_forms}--\eqref{eq:uts_normal_forms} in \emph{any} choice of uniformizer,
and ask that the $\Ad^{\dual}_{G_s}$-orbit of the principal part of a connection $\wh{\mc A} \in \mf g (\Omega^1_{\ms K})$ intersects them (cf.~Rmk.~\ref{rmk:global_intrinsicality}).
Moreover,
one can associate two parabolic filtrations $\mf p^\pm_s \sse \dm \sse \mf p^\pm_1 \sse \mf g$ to any element $\mc A'$ of these subspaces,
so that the underlying Levi filtration controls the nested $\Ad_G$-stabilizers of the coefficients of $\mc A'$ in some/any choice of uniformizer.
Finally,
The Verma modules~\eqref{eq:vermas} still depend on $\varpi$,
precisely to select a character $\bm \chi^\pm$ for the Lie subalgebras $\bm{\mf p}^\pm \sse \mf g_s$;
nonetheless,
the isomorphism class of the deformation quantization of \S~\ref{sec:quantum_orbits} is \emph{intrinsic}:

\begin{lemm}
	\label{lem:invariant_vermas}

	\leavevmode

	\begin{enumerate}
		\item
		      Any element $F \in \mb C \llb \varpi \rrb^{\ts}$ yields a $U(\mf g_s)$-linear isomorphism
		      \begin{equation}
			      \label{eq:coordinate_change_on_vermas}
			      \prescript{}F\varphi \cl M^{(s)}_{\bm{\mf p}^\pm,\bm\chi^\pm} \lxra{\simeq} M^{(s)}_{\bm{\mf p}^\pm,\wt{\bm\chi}^\pm},
			      \qquad \wt{\bm\chi}^\pm
			      \ceqq \bm\chi^\pm.F.\fn{
				      Here we view $\bm\chi^\pm$ as elements of~\eqref{eq:uts_normal_forms}.
				      Equivalently,
				      the characters correspond to $s$-uples of elements in the dual centres of $\mf l_i = \mf p^+_i \cap \mf p^-_i$,
				      and these centres are also preserved---%
				      and acted on---%
				      by $\mb C \llb \varpi \rrb^{\ts}$.}
		      \end{equation}

		\item
		      And $\prescript{}{F'F}\varphi = \prescript{}{F'}\varphi \circ \prescript{}F\varphi$,
		      for any other element $F' \in \mb C \llb \varpi \rrb^{\ts}$.
	\end{enumerate}
\end{lemm}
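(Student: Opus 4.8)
The plan is to realize $\prescript{}F\varphi$ as the map induced on induced modules by the Lie-algebra automorphism of $\mf g_s$ attached to the change of uniformizer. Write $F(\varpi) = \varpi f$ with $f(0) \in \mb C^\ast$. Since $\varpi f$ again generates $\mf m$, the automorphism $F$ descends to a $\mb C$-algebra automorphism $\bar F$ of $\ms O_s = \ms O \slash \mf m^s$, hence to a Lie-algebra automorphism $\sigma_F \ceqq \Id_{\mf g} \ots \bar F$ of $\mf g_s = \mf g \ots \ms O_s$, and further to an algebra automorphism of $U(\mf g_s)$; moreover $F \mt \sigma_F$ is multiplicative, and dual to the $\mb C \llb \varpi \rrb^{\ts}$-action of \S~\ref{sec:ring_automorphisms} in the sense that $\braket{F.\xi,X} = \braket{\xi,\sigma_F^{-1}X}$ for all $\xi \in \mf g_s^\dual$ and $X \in \mf g_s$, in the duality of \S~\ref{sec:tcla_duality}.

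First I would check that $\sigma_F$ stabilizes $\bm{\mf p}^\pm \sse \mf g_s$. As $\bar F$ preserves the $\mf m$-adic filtration of $\ms O_s$, the automorphism $\sigma_F$ preserves the Birkhoff filtration $\mf g \ots (\mf m^i \slash \mf m^s) \sse \mf g_s$, acting on the $i$-th associated-graded piece ($\simeq \mf g$) by the scalar $f(0)^i$; hence for $X = \sum_i X_i \varpi^i \in \bm{\mf p}^\pm$, i.e.~with $X_i \in \mf p^\pm_{s-i}$, the $\varpi^k$-component of $\sigma_F(X)$ is a $\mb C$-linear combination of $X_0,\dc,X_k$, all of which already lie in $\mf p^\pm_{s-k}$ by the nesting $\mf p^\pm_s \sse \dm \sse \mf p^\pm_1$; thus $\sigma_F(X) \in \bm{\mf p}^\pm$, and applying the same to $F^{-1}$ gives $\sigma_F(\bm{\mf p}^\pm) = \bm{\mf p}^\pm$. (This is the infinitesimal shadow of the uniformizer-independence recorded in Lem.~\ref{lem:coordinate_invariance}.) Next, since $\wt{\bm\chi}^\pm = \bigl( F.\mc A' \bigr)\big|_{\bm{\mf p}^\pm}$ and $F.\mc A'$ is again in $\mf g'_s$ with the same Levi/parabolic sequence by Lem.~\ref{lem:coordinate_invariance}, the displayed duality yields $\wt{\bm\chi}^\pm(X) = \braket{F.\mc A',X} = \braket{\mc A',\sigma_F^{-1}X} = \bm\chi^\pm\bigl(\sigma_F^{-1}X\bigr)$ for $X \in \bm{\mf p}^\pm$ (using $\sigma_F^{-1}X \in \bm{\mf p}^\pm$), that is $\wt{\bm\chi}^\pm = \bm\chi^\pm \circ \sigma_F^{-1}\big|_{\bm{\mf p}^\pm}$.

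Granting these two facts, the rest is formal. Presenting $M^{(s)}_{\bm{\mf p}^\pm,\bm\chi^\pm}$ as the quotient of $U(\mf g_s)$ by the left ideal $I^\pm$ generated by the elements $X - \bm\chi^\pm(X)$, $X \in \bm{\mf p}^\pm$ (Rmk.~\ref{rmk:vermas_are_quotients}), and $M^{(s)}_{\bm{\mf p}^\pm,\wt{\bm\chi}^\pm}$ likewise with ideal $\wt I^\pm$, the automorphism $\sigma_F$ carries the displayed generators of $I^\pm$ onto those of $\wt I^\pm$ — since $\sigma_F\bigl(X - \bm\chi^\pm(X)\bigr) = \sigma_F(X) - \wt{\bm\chi}^\pm\bigl(\sigma_F(X)\bigr)$ and $\sigma_F(X)$ runs over $\bm{\mf p}^\pm$ — so $\sigma_F(I^\pm) = \wt I^\pm$ and $\sigma_F$ descends to the sought isomorphism $\prescript{}F\varphi$, $\mb C$-linear and $U(\mf g_s)$-linear once $\sigma_F$ is used to identify the two $U(\mf g_s)$-actions (which is all that is needed to transport the Shapovalov pairing, hence the $\ast$-product), and uniquely pinned down by $v^\pm \mt v^\pm$. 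The cocycle identity then follows because $F \mt \sigma_F$ is multiplicative and compatible with the composition rule $\bm\chi^\pm.(F'F) = (\bm\chi^\pm.F).F'$: both $\prescript{}{F'F}\varphi$ and $\prescript{}{F'}\varphi \circ \prescript{}F\varphi$ are the descent of $\sigma_{F'} \circ \sigma_F = \sigma_{F'F}$ and agree on cyclic vectors.

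I expect the genuine content to be the two verifications in the second paragraph: that $\sigma_F$ stabilizes $\bm{\mf p}^\pm$ and conjugates $\bm\chi^\pm$ to $\wt{\bm\chi}^\pm$. Each is a short computation with the $\varpi$-graded components, exploiting that $\sigma_F$ is `upper triangular with invertible diagonal' for the Birkhoff filtration together with the nesting of the parabolic/Levi filtrations — precisely the information packaged in Lem.~\ref{lem:coordinate_invariance}. Everything else (descent to the quotients, the cocycle) is bookkeeping.
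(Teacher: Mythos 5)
Your proposal is correct and follows the same route as the paper's own (very terse) proof: realize $\prescript{}F\varphi$ as the descent to the cyclic quotients of the universal-enveloping extension of the Lie-algebra automorphism $\sigma_F = \Id_\mf g \ots \bar F$ of $\mf g_s$ (cf.~\S~\ref{sec:tcla_automorphisms}), then use that $\sigma_F$ carries the annihilator ideal of $v^\pm$ onto that of the twisted cyclic vector. You merely spell out the two verifications the paper leaves implicit — that $\sigma_F$ preserves $\bm{\mf p}^\pm$ (via the `upper-triangular with invertible diagonal' behaviour on the Birkhoff filtration together with the nesting of the parabolics), and that the residue duality of \S~\ref{sec:tcla_duality} gives $\wt{\bm\chi}^\pm = \bm\chi^\pm \circ \sigma_F^{-1}$ — which is a sound expansion, not a different argument.
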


\begin{proof}
	The $\mb C\llb \varpi \rrb^{\ts}$-action on $\mf g_s^{\dual}$ also comes by dualizing an action on $\mf g_s$,
	by Lie-algebra automorphisms (cf.~\S~\ref{sec:tcla_automorphisms}).
	Then the latter extends to an action on $U(\mf g_s)$,
	by algebra automorphisms,
	mapping the left-ideals annihilating the cyclic vectors~\eqref{eq:cyclic_vectors} onto each other.
	The compatibility with compositions is automatic.
\end{proof}

\begin{rema}
	The group $\mb C \llb \varpi \rrb^{\ts}$ also acts on $G \llb \varpi \rrb$,
	and one can consider the corresponding semidirect product:
	the latter can be identified with the group of bundle automorphisms of $\mc D \ts \bm G \to \mc D$,
	covering \emph{any} automorphism of the base.
	After truncation,
	this leads to $\ms O_s^{\ts} \lts G_s$,
	which is intimately related with the group of Lie-algebra automorphisms of $\mf g_s$ (cf.~Thm.~\ref{thm:tcla_automorphisms}).
\end{rema}

\begin{rema}
	The optional choice of a maximal torus $T \sse G$ is compatible with the above,
	as the corresponding truncated-current Cartan subalgebra $\mf t_s \sse \mf g_s$ is $\mb C \llb \varpi \rrb^{\ts}$-invariant.
	Then Lem.~\ref{lem:coordinate_invariance} implies that each root-valuation stratum of $\mf t_s^{\dual} \simeq (\mf t^{\dual})^s$ is $\mb C \llb \varpi \rrb^{\ts}$-invariant,
	and recall that the $\Ad^{\dual}_{G_s}$-orbits through a given stratum are canonically isomorphic as $G_s$-varieties.
	(They are also \emph{symplectomorphic},
	provided that they are related by the $\mb C \llb \varpi \rrb^{\ts}$-action).
\end{rema}

\section{Wild de Rham spaces,
  and their quantization}
\label{sec:unframed_de_rham}

\subsection{(Semiclassical) Hamiltonian reduction}
\label{sec:semiclassical_reduction}

Keep all the notation from \S~\ref{sec:intro}:
we work on $\Sigma = \mb CP^1$,
with finitely many marked points $\bm a \sse \Sigma$,
and consider the `open part' $\mc C^*_{\dR} \sse \mc C_{\dR}$ of the naive de Rham groupoid;
its objects are certain meromorphic connections $\bm{\mc A}$,
with polar divisor bounded by $D = \sum_{\bm a} s_a[a]$,
defined on the trivial holomorphic principal $G$-bundle over $\Sigma$.

\subsubsection{}

Equivalently,
$\Ob(\mc C_{\dR}^*)$ consists of suitably constrained $\mf g$-valued meromorphic 1-form on the Riemann sphere---%
still denoted by $\bm{\mc A}$.
We regard $\Sigma$ as the 1-point compactification of the standard complex $z$-plane,
and suppose that all marked points $a \in \bm a$ are at finite distance.
If $t_a \ceqq z(a) \in \mb C$ is the position of each marked point,
then one can uniquely write
\begin{equation}
	\label{eq:connection_on_p1}
	\bm{\mc A} = \sum_{\bm a} \mc A_a,
	\qquad \mc A_a = \Lambda_a z_a^{-1} \dif z + \dif Q_a,
	\quad z_a
	\ceqq z - t_a,
\end{equation}
for suitable residues $\Lambda_a = A_{a,0} \in \mf g$,
where as usual
\begin{equation}
	Q_a = \sum_{i = 1}^{s_a - 1} A_{a,i} \frac{z_a^{-i}}{-i},
	\qquad A_{a,1},\dc,A_{a,s_a-1} \in \mf g.
\end{equation}
Moreover,
the identity $\sum_a \Lambda_a = 0$ must hold in $\mf g$,
lest there is a regular singularity at infinity.

Now $z_a$ yields a uniformizer $\varpi_a$,
and the principal part of the formal Laurent expansion $\wh{\mc A}_a$ of~\eqref{eq:connection_on_p1}---%
as in~\eqref{eq:formal_laurent_expansion}---%
coincides with $\mc A_a$ (just written in a formal variable).
The nonsingular part
\begin{equation}
	\label{eq:nonsingular_part_p1}
	\mc B_a
	\ceqq \wh{\mc A}_a - \mc A_a \in \mf g \llb \varpi_a \rrb \dif \varpi_a,
\end{equation}
instead,
is computed by gathering the formal Taylor expansions of $\mc A_{a'}$ at $a$,
for all $a' \neq a$.
The final hypothesis is that $\wh{\mc A}_a$ lies in the $G \llb \varpi_a \rrb$-orbit $\wh{\mc O}'_a$ of a NUTS principal part $\mc A'_a$ (cf.~\eqref{eq:normal_form_intro}),
so that in particular~\eqref{eq:nonsingular_part_p1} can be formally gauged away.
This is \emph{equivalent} to asking that $\mc A_a$ lies in the $\Ad^{\dual}_{G_{s_a}}$-orbit $\mc O'_a \ceqq G_{s_a}.\mc A'_a$,
invoking a TCLG (cf.~\eqref{eq:tclg_sequence}).
Denote by $\bm{\mc O}' = \set{\mc O'_a}_{\bm a}$ the multiset of these finite-dimensional orbits.

\begin{rema}
	\label{rmk:global_intrinsicality}

	The fact that~\eqref{eq:connection_on_p1} admits a NUTS normal form at each pole is an \emph{intrinsic} notion.
	Namely,
	introduce the completed local field $\ms K_a = \Frac(\ms O_a)$ at each marked point,
	and the module of continuous Kähler differentials $\ms K_a \lxra{\dif} \Omega^1_{\ms K_a}$ (cf.~\eqref{eq:kaehler_differentials}).
	Then choose an orbit for the $\bm G (\ms O_a)$-action on $\mf g(\Omega^1_{\ms K_a})$:
	define it to be \emph{NUTS} (resp.,
	\emph{UTS}) if it meets an element of~\eqref{eq:nuts_normal_forms} (resp.,
	of~\eqref{eq:uts_normal_forms}),
	in some/any choice of uniformizer,
	which makes sense by Lem.~\ref{lem:coordinate_invariance}.
	This works on arbitrary principal $G$-bundles,
	over arbitrary base surfaces.
\end{rema}

\subsubsection{}

Overall,
denoting the orbit product by
\begin{equation}
	\label{eq:orbit_product}
	\mc O'_{\bm a} \ceqq \prod_{\bm a} \mc O'_a \sse \prod_{\bm a} \mf g_{s_a}^{\dual},
\end{equation}
there is a natural injective function
\begin{equation}
	\label{eq:fin_dim_description_de_rham}
	\Ob(\mc C_{\dR}^*) \lhra M
	= M(\bm{\mc O}';G)
	\ceqq \Set{ \prod_{\bm a} \mc A_a \in \mc O'_{\bm a} | \sum_{\bm a} \Lambda_a = 0 }.
\end{equation}
Conversely,
it is \emph{surjective} by the NUTS condition.
Finally,
the global holomorphic gauge transformations of the trivial bundle are \emph{constant},
and match up with the diagonal $G$-action on the principal parts $\mc A_a \in \mf g_{s_a}^{\dual}$.
So~\eqref{eq:fin_dim_description_de_rham} induces a bijection
\begin{equation}
	\Ob(\mc C^*_{\dR}) \bs \on{iso.} \lxra{\simeq} M \bs G.
\end{equation}
But we keep working in the complex-algebraic category,
considering the affine GIT quotient
\begin{equation}
	\label{eq:de_rham_as_git}
	\mc M_{\dR}^*
	\ceqq M \bs\!\!\bs \bm G,
\end{equation}
viewing $M \sse \mc O'_{\bm a}$ as an affine subvariety (cf.~Prop.~\ref{prop:affine_orbits}).
Recall that we assume that all points of~\eqref{eq:fin_dim_description_de_rham} are \emph{stable} (cf.~\S~\ref{sec:stability}),
so that~\eqref{eq:de_rham_as_git} is a genuine geometric quotient---%
and orbit space.

\begin{rema}
	\label{rmk:resonant_de_rham}

	In principle,
	one might consider a groupoid of (possibly resonant) meromorphic connections,
	with prescribed formal gauge orbits through UTS principal parts,
	at each pole.
	As in~\eqref{eq:fin_dim_description_de_rham},
	its set of objects can still be identified with a subset of the $\mb C$-points of a finite-dimensional affine variety:
	it is unclear---%
	to the authors---%
	whether this has nice algebro-geometric/symplectic structures.
\end{rema}

\subsection{Quantum Hamiltonian reduction}

In brief,
consider the \emph{affine-GIT Hamiltonian reduction}
\begin{equation}
	\label{eq:semiclassical_reduction}
	\mc M^*_{\dR}
	= \mc O'_{\bm a} \, \bs\!\!\bs\!\!\bs_{\!\!0} \, \bm G
	\ceqq (\bm\mu')^{-1}(0) \, \bs\!\!\bs \, \bm G,
\end{equation}
of a product of coadjoint orbits in dual TCLAs,
with respect to the moment map
\begin{equation}
	\label{eq:moment_map}
	\bm\mu' = \bm\mu'_{\mc O'_{\bm a}} \cl \mc O'_{\bm a} \lra \mf g^{\dual} \simeq \mf g,
	\qquad \prod_{\bm a} \mc A_a \lmt \sum_{\bm a} \Res(\mc A_a).
\end{equation}
The aim is to quantize~\eqref{eq:semiclassical_reduction},
using the material of \S~\ref{sec:quantum_orbits}.

\subsubsection{}

Denote by $\mc R_{a,0} \ceqq \mb C[\mc O'_a]$ (resp.,
by $\mc R_{\bm a,0} \ceqq \mb C[\mc O'_{\bm a}] \simeq \bots_{\bm a} \mc R_{a,0}$) the ring of global regular functions on each orbit (resp.,
on the orbit product).
Then there is a comoment map $(\bm\mu')^* \cl \Sym(\mf g) \to \mc R_{\bm a,0}$,
corresponding to the pullback along~\eqref{eq:moment_map}.
Now the contravariant version of~\eqref{eq:semiclassical_reduction} goes as follows (cf.~\cite{etingof_2007_calogero_moser_systems_and_representation_theory}):
(i) the extension of the \emph{augmentation ideal} along $\bm\mu'$,
i.e.,
\begin{equation}
	\label{eq:augmentation_pushforward}
	\mf J_{\bm a,0} \ceqq \mc R_{\bm a,0}\cdot (\bm\mu')^*\bigl( \Sym(\mf g) \cdot \mf g \bigr) \sse \mc R_{\bm a,0},
\end{equation}
intersects the subring $\mc R_{\bm a,0}^\mf g \sse \mc R_{\bm a,0}$ in a Poisson ideal;
and (ii) there is an isomorphism
\begin{equation}
	\label{eq:semiclassical_reduction_2}
	\mc M_{\dR}^* \simeq \Spec \bigl( \mc R_{\bm a,0} \, \bs\!\!\bs\!\!\bs_{\! 0} \, G \bigr),
	\qquad  \mc R_{\bm a,0} \,\bs\!\!\bs\!\!\bs_{\! 0} \, G
	\ceqq \mc R_{\bm a,0}^\mf g \bs \bigl( \mf J_{\bm a,0} \cap \mc R_{\bm a,0}^\mf g \bigr),
\end{equation}
of affine Poisson varieties.

Now,
for each $a \in \bm a$,
let $\wh{\mc R}_{a,\hs} \simeq \mc R_{a,0} \llb \hs \rrb$ be the deformation quantization of $\mc R_{a,0}$ from \S~\ref{sec:quantum_orbits}.
Then their completed $\mb C \llb \hs \rrb$-multilinear tensor product,
viz.,
\begin{equation}
	\label{eq:quantum_product}
	\wh{\mc R}_{\bm a,\hs}
	\ceqq \bigl(\wh\bots_{\mb C \llb \hs \rrb}\bigr)_{\!\bm a} \bigl( \wh{\mc R}_{a,\hs} \bigr) \simeq \mc R_{\bm a,0} \llb \hs \rrb,
\end{equation}
yields a deformation quantization of~\eqref{eq:orbit_product}.
Moreover,
Thm.-Def.~\ref{thm:strong_quantum_comoment} yields a \emph{quantization} $(\wh{\bm\mu}')^*$ of the comoment map $(\bm\mu')^*$,
as per~\eqref{eq:quantum_comoment}--\eqref{eq:quantization_of_action}.
It generates an action of $\mf g$ on $\wh{\mc R}_{\bm a,\hs}$,
quantizing the `semiclassical' $\mf g$-action on the orbit product.
The `quantum' analogue of~\eqref{eq:semiclassical_reduction_2} is the $\mb C \llb \hs \rrb$-algebra
\begin{equation}
	\label{eq:quantum_reduction}
	\wh{\mc R}_{\bm a,\hs} \, \bs\!\!\bs\!\!\bs_{\! 0} G
	\ceqq \wh{\mc R}_{\bm a,\hs}^\mf g \bs \bigl( \mf J_{\bm a,\hs} \cap \wh{\mc R}_{\bm a,\hs}^\mf g \bigr),
\end{equation}
where $\mf J_{\bm a,\hs} \sse \wh{\mc R}_{\bm a,\hs}$ is the left extension of the augmentation ideal.

We will conclude by the following fact,
ensuring that~\eqref{eq:quantum_reduction} is a \emph{flat} deformation (the crux of the matter is $\hs$ being a \emph{nonzerodivisor},
cf.~\cite[Prop.~XVI.2.4]{kassel_1995_quantum_groups},
~\cite[Prop.~1.1.2]{bruns_herzog_1993_cohen_macaulay_rings},
and~\cite[Prop.~1]{losev_2021_quantization_commutes_with_reduction}):

\begin{lemm}
	\label{lem:quantum_flatness}

	Suppose that the $G$-moment map~\eqref{eq:moment_map} is \emph{flat}.
	Then~\eqref{eq:quantum_reduction} is a deformation quantization of $\mc M^*_{\dR}$.
\end{lemm}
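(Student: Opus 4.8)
The plan is to deduce the assertion from the standard principle that \emph{flatness of the classical moment map implies that quantization commutes with reduction}, following the usual argument (e.g.~\cite{etingof_2007_calogero_moser_systems_and_representation_theory,losev_2021_quantization_commutes_with_reduction,rembado_2020_symmetries_of_the_simply_laced_quantum_connections_and_quantisation_of_quiver_varieties}), adapted to the present $\hs$-adic setting. First I would recall that, by construction, $\wh{\mc R}_{\bm a,\hs}$ is a topologically-free $\mb C\llb \hs \rrb$-module with $\wh{\mc R}_{\bm a,\hs}\slash \hs\wh{\mc R}_{\bm a,\hs}\simeq \mc R_{\bm a,0}$, and that the quantum comoment map $(\wh{\bm\mu}')^*$ reduces mod $\hs$ to the classical comoment map $(\bm\mu')^*$ via~\eqref{eq:quantization_of_action}. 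The claim to establish is then twofold: (i) the quotient ring~\eqref{eq:quantum_reduction}, after passing to its $\hs$-separated (Hausdorff) completion, is again topologically free over $\mb C\llb\hs\rrb$; and (ii) its reduction mod $\hs$ recovers the classical reduced Poisson ring~\eqref{eq:semiclassical_reduction_2}, with the induced $\ast$-product a deformation of the classical Poisson bracket — i.e.\ it is a genuine deformation quantization of $\mc M^*_{\dR}$.

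For (ii), the key point is an $\hs$-adic Nakayama/flatness argument. Let $\mf J_{\bm a,0}\sse \mc R_{\bm a,0}$ be the classical augmentation-pushforward ideal of~\eqref{eq:augmentation_pushforward}, and $\mf J_{\bm a,\hs}\sse \wh{\mc R}_{\bm a,\hs}$ its quantum counterpart. Flatness of~\eqref{eq:moment_map} says exactly that $\mc R_{\bm a,0}$ is flat over $\Sym(\mf g)$ via $(\bm\mu')^*$; equivalently, that $\mc R_{\bm a,0}$ is a flat module over the coordinate ring of $\mf g^{\dual}$, so that the augmentation ideal generates a resolution with no higher Tor. Concretely, I would use flatness to guarantee that a (finite) free resolution of $\mc R_{\bm a,0}\slash\mf J_{\bm a,0}$ as a $\Sym(\mf g)$-module, obtained from the Koszul complex on a regular system of parameters (the components of $(\bm\mu')^*$ applied to a basis of $\mf g$), \emph{stays exact}. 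Lifting this Koszul-type complex to the Rees algebra $U_\hs(\mf g)$ — whose elements, modulo $\hs$, are the symbol algebra $\Sym(\mf g)$ — and tensoring with $\wh{\mc R}_{\bm a,\hs}$ along $(\wh{\bm\mu}')^*$, the flatness of the classical complex forces the quantum complex to be acyclic after $\hs$-completion, by the graded Nakayama lemma applied $\hs$-adically. This yields $\wh{\mc R}_{\bm a,\hs}\slash\mf J_{\bm a,\hs}$ topologically free with classical limit $\mc R_{\bm a,0}\slash\mf J_{\bm a,0}$. Then, since $G$ is reductive (hence linearly reductive, and its action on $\wh{\mc R}_{\bm a,\hs}$ is by $\hs$-adically continuous automorphisms with the $\hs\to 0$ limit the classical $G$-action), taking $\mf g$-invariants is exact and commutes with reduction mod $\hs$: this gives $\bigl(\wh{\mc R}_{\bm a,\hs}\slash\mf J_{\bm a,\hs}\bigr)^{\mf g}\slash\hs \simeq \bigl(\mc R_{\bm a,0}\slash\mf J_{\bm a,0}\bigr)^{G}\simeq \mc R_{\bm a,0}\,\bs\!\!\bs\!\!\bs_0\,G$, identifying~\eqref{eq:quantum_reduction} mod $\hs$ with~\eqref{eq:semiclassical_reduction_2}.

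Finally I would check that the residual product on~\eqref{eq:quantum_reduction} is indeed a $\ast$-product: associativity and $\mb C\llb\hs\rrb$-bilinearity are inherited from $\wh{\mc R}_{\bm a,\hs}$, the zeroth order term is the commutative multiplication of $\mc R_{\bm a,0}\,\bs\!\!\bs\!\!\bs_0\,G$, and the first-order commutator computes the reduced Poisson bracket because the quantum comoment map is a \emph{strong} comoment (cf.\ Thm.-Def.~\ref{thm:strong_quantum_comoment}), so the Poisson and commutator structures are compatible before reduction and descend compatibly. The main obstacle, and the step that needs the genuine input of the hypothesis, is (ii)'s flatness transfer: one must argue carefully that flatness of the \emph{classical} moment map is exactly what is needed to propagate exactness of the Koszul-type complex through the $\hs$-filtered quantization — without flatness, $\mf J_{\bm a,\hs}$ could acquire extra $\hs$-torsion (equivalently, the reduced quantum ring could jump in size), which is precisely the phenomenon illustrated by the counterexample with a single simple pole mentioned in the remarks. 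All remaining steps (linear reductivity, $\hs$-completion bookkeeping, the commutator computation) are routine.
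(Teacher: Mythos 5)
Your proposal is correct and follows essentially the same route as the paper, namely the standard `flatness of the classical moment map implies quantization commutes with reduction' argument. The paper's version is terser: it reduces the claim (via \cite[Prop.~VXI.2.4]{kassel_1995_quantum_groups}) to showing the $\mb C\llb\hs\rrb$-module~\eqref{eq:quantum_reduction} is complete, separated, and torsion-free; observes that the first is automatic and the second is arranged by taking the $\hs$-separated quotient; and then handles torsion-freeness --- i.e.\ $\hs$ being a nonzerodivisor --- by citing \cite[Prop.~1.1.2]{bruns_herzog_1993_cohen_macaulay_rings} (flat ring maps preserve weakly regular sequences) and \cite[Prop.~1]{losev_2021_quantization_commutes_with_reduction}. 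Your version instead spells out the Koszul-complex acyclicity argument that those references encapsulate: that is a legitimate and more self-contained expansion, and your identification of the failure mode (extra $\hs$-torsion in $\mf J_{\bm a,\hs}$ when flatness fails) is exactly the phenomenon the paper controls. The one refinement worth noting is that full flatness is actually a bit more than is needed: as the paper's footnote observes, it suffices that the image of the regular sequence be $H_1$-regular, i.e.\ that the first Koszul homology vanish, not that the whole complex be exact.
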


\begin{proof}[Proof omitted]
\end{proof}

\section{Interlude:
  preparation for flatness}
\label{sec:interlude}

The upshot of \S\S~\ref{sec:quantum_orbits}--\ref{sec:unframed_de_rham} is a deformation quantization of wild de Rham spaces,
whenever the $G$-moment map~\eqref{eq:moment_map} is a flat morphism.
In \S~\ref{sec:flatness_general} we provide sufficient conditions to ensure that it is \emph{faithfully flat},
and to this end we first recall further notions/terminology.

\subsection{Miracle flatness}

First,
note that~\eqref{eq:orbit_product} is an \emph{irreducible} variety,
being a product of homogeneous spaces.
(Incidentally,
the affine-GIT quotient~\eqref{eq:semiclassical_reduction} is also \emph{irreducible}.)

Second,
recall that a morphism of irreducible nonsingular varieties is faithfully flat if and only if:
(i) it is surjective;
and (ii) it has equidimensional fibres.
In general flatness is stronger,
and the nontrivial implication is proven,
e.g.,
in~\cite{nowak_1997_flat_morphisms_between_regular_varieties}---%
under the weaker assumption that the map is \emph{dominant}.
(Cf.~also~\cite[Thm.~26.2.11]{vakil_2024_the_rising_sea_foundations_of_algebraic_geometry}.)

\subsection{Tori,
	Borels,
	and reduction to the semisimple case}

Hereafter,
it will be convenient to fix---%
w.l.o.g.---%
a maximal torus $T \sse G$,
with Lie algebra $\mf t \sse \mf g$.
We will also use two Borel subgroups $B^\pm \sse G$ intersecting at $T$,
with Lie algebras $\mf b^\pm \sse \mf g$ and unipotent radicals $U^\pm \sse B^\pm$;
and let $\mf u^\pm \ceqq \Lie(U^\pm)$.
Then consider a $\mf t$-\emph{valued} principal part,
which is automatically UTS:
\begin{equation}
	\label{eq:normal_form_reindexing}
	\mc A'
	= \sum_{i = 0}^{s-1} A'_i \varpi^{-i-1} \dif \varpi,
	\qquad \Lambda = A'_0,A'_1,\dc,A'_{s-1} \in \mf t,
\end{equation}
for an integer $s \geq 1$.
It has \emph{leading term}
\begin{equation}
	\label{eq:leading_term}
	\mc A'_{\on{top}} \ceqq A'_{s-1} \varpi^{-s} \dif \varpi,
\end{equation}
and its $\Ad^{\dual}_{G_s}$-orbit is still denoted by $\mc O' \ceqq G_s.\mc A' \sse \mf g_s^{\dual}$.
Note that the choice of a uniformizer is also w.l.o.g.:
changing $\varpi$ just moves to another such orbit,
by a symplectomorphism,
in $G$-equivariant fashion.

Now observe the following (cf.~\cite[Rmk.~2.9]{yamakawa_2019_fundamental_two_forms_for_isomonodromic_deformations}):

\begin{enonce}{Lemma-Definition}
	\label{lem:semisimple_reduction}

	Let $\mf g = \mf Z(\mf g) \ops [\mf g,\mf g]$ be the splitting into centre plus semisimple part;
	extend/dualize it to $\mf g_s^{\dual}$,
	and denote by $\mc A' = \mc A'_{\mf Z} + \wt{\mc A}'$ the corresponding decomposition of the normal form.
	Then $\mc O'$ is \emph{canonically} isomorphic to the $\Ad^{\dual}_{G_s}$-orbit through $\wt{\mc A}'$,
	as a Hamiltonian $G_s$-variety.
\end{enonce}

\begin{proof}[Proof omitted]
\end{proof}

\subsubsection{}

In view of Lem.~\ref{lem:semisimple_reduction},
up to symplectomorphisms of de Rham spaces one can consider normal forms with \emph{no} central component.
Until \S~\ref{sec:stability} let us just suppose that $G$ is \emph{semisimple}.

\subsection{Birkhoff/extended orbits}
\label{sec:extended_orbits}

Introduce also:
(i) the \emph{Birkhoff orbit}
\begin{equation}
	\label{eq:birkhoff_orbit}
	\check{\mc O}'
	= \check{\mc O}'_{Q'} \ceqq \on{Bir}_s.(\dif Q') \sse \mf{bir}_s^{\dual},
\end{equation}
through the irregular part of $\mc A'$;
and (ii) the \emph{extended orbit}
\begin{equation}
	\label{eq:extended_orbit}
	\wt{\mc O}'
	= \wt{\mc O}'_{Q'}
	\ceqq \Set{ (g,\mc A) \in G \ts \mf g_s^{\dual} | \pi_{\irr} \bigl( \Ad^{\dual}_g(\mc A) \bigr) \in \check{\mc O}' },
\end{equation}
using the canonical projection $\pi_{\irr} \cl \mf g_s^{\dual} \thra \mf{bir}_s^{\dual}$ along $\mf g^{\dual}$ (cf.~\S~\ref{sec:birkhoff}).

\subsubsection{}
\label{sec:setup_flatness}

We will use results about the Hamiltonian geometry of~\eqref{eq:birkhoff_orbit}--\eqref{eq:extended_orbit},
keeping the notation of \S~\ref{sec:quantum_orbits} for the reductive subgroups/subalgebras $L_i \sse G$ and $\mf l_i \sse \mf g$ determined by $\mc A'$.
In addition,
let $\pi_i \cl \mf g^{\dual} \thra \mf l_i^{\dual}$ be the transposition of the Lie-algebra inclusion $\mf l_i \hra \mf g$,
for $i \in \set{1,\dc,s}$.
Under the usual dualities,
one can view them as linear maps $\mf g \thra \mf l_i$.
Write in particular
\begin{equation}
	\pi'
	\ceqq \pi_{s-1} \cl \mf g^{\dual} \lthra (\mf l')^{\dual},
	\qquad \mf l'
	= \mf l_{Q'}
	\ceqq \mf l_{s-1}.
\end{equation}
Finally,
trivialize $\on T^*\!G \simeq G \ts \mf g^{\dual}$ using left translations:
elements are written $(g,\Lambda)$,
with $g \in G$ and $\Lambda \in \mf g \simeq \mf g^{\dual}$ (cf.~Rmk.~\ref{rmk:residue}).
Then the following holds true (cf.~\cite{boalch_2001_symplectic_manifolds_and_isomonodromic_deformations,yamakawa_2019_fundamental_two_forms_for_isomonodromic_deformations}):
\begin{enumerate}
	\item
	      there is a `decoupling' symplectomorphism $\on T^*\!G \ts \check{\mc O}' \lxra{\simeq} \wt{\mc O}'$,
	      i.e.,
	      \begin{equation}
		      \label{eq:decoupling}
		      \bigl( (g,\Lambda),\dif Q \bigr) \lmt (g,\mc A),
		      \qquad \mc A \ceqq \Lambda \varpi^{-1}\dif \varpi + \Ad^{\dual}_g(\dif Q);
	      \end{equation}

	\item
	      the $\Ad^{\dual}_{L'}$-action on $\check{\mc O}'$ is Hamiltonian,
	      with a moment map
	      \begin{equation}
		      \label{eq:birkhoff_moment_map}
		      \check\mu'
		      = \check\mu'_{\check{\mc O}'} \cl \check{\mc O}' \lra (\mf l')^{\dual} \simeq \mf l',
	      \end{equation}
	      satisfying $\check\mu'(\dif Q') = 0$;

	\item
	      the direct product $G \ts L'$ acts on $\wt{\mc O}'$,
	      via
	      \begin{equation}
		      \label{eq:product_action_extended_orbit}
		      \bigl( (g,\Lambda),\dif Q \bigr) \lmt \bigl( \bigl( h g \wt g^{-1},\Ad_{\wt g}(\Lambda) \bigr), \Ad_h (\dif Q) \bigr),
		      \qquad \wt g \in G,
		      \quad h \in L';
	      \end{equation}

	\item
	      the action~\eqref{eq:product_action_extended_orbit} is Hamiltonian,
	      and generated by the moment map
	      \begin{equation}
		      \wt\mu'
		      = (\wt\mu'_1,\wt\mu'_2) \cl \wt{\mc O}' \lra \mf g^{\dual} \ts (\mf l')^{\dual} \simeq \mf g \ts \mf l',
	      \end{equation}
	      where $\wt\mu'_1 \cl \bigl( (g,\Lambda),\dif Q \bigr) \lmt \Lambda$,
	      and
	      \begin{equation}
		      \label{eq:second_component_moment_map}
		      \wt\mu'_2 \cl \bigl( (g,\Lambda),\dif Q \bigr) \lmt \check\mu'(\dif Q) - \pi'\bigl( \Ad_g(\Lambda) \bigr);
	      \end{equation}

	\item
	      and there is a $G$-equivariant symplectomorphism
	      \begin{equation}
		      \label{eq:orbit_from_extended_orbit}
		      \wt{\mc O}' \, \bs\!\!\bs\!\!\bs_{\! (-\Lambda')} \\ \, L'
		      \ceqq (\wt \mu'_2)^{-1}(- \Lambda') \, \bs\!\!\bs \\ \, L_s \lxra{\simeq} \mc O',
	      \end{equation}
	      recalling that $L_s \sse L'$ is the $\Ad_{L'}$-stabilizer of the normal residue.
\end{enumerate}

\section{Criteria for flatness}
\label{sec:flatness_general}

\subsection{}

Here we prove the main Thm.~\ref{thm:main_result}.
Namely,
recall from \S~\ref{sec:main_theorem} that we associate an integer $\nu_a \geq 0$ to a UTS orbit $\mc O'_a \sse \mf g_{s_a}^{\dual}$,
via
\begin{equation}
	\label{eq:moduli_number}
	\nu_a
	= \abs{\mc T_a},
	\qquad \mc T_a
	\ceqq \Set{ i \in \set{1,\dc,s} | L_{a,i} = T },
\end{equation}
looking at the fission sequence $L_{a,s_a} \sse \dm \sse L_{a,1} \sse L_{a,0} \ceqq G$ determined by any UTS principal part $\mc A'_a \in \mc O'_a$ (cf.~\eqref{eq:normal_form_reindexing}).
Then we show flatness in the following cases,
in increasing difficulty:
\begin{enumerate}
	\item
	      there is one pole $a \in \bm a$ such that $\nu_a \geq 3$ (cf.~\S~\ref{sec:flatness_nu_3});

	\item
	      there are two poles $a,a' \in \bm a$ such that $\nu_a \geq 2$ and $\nu_{a'} \geq 1$ (cf.~\S~\ref{sec:flatness_nu_2});

	\item
	      or there are three poles $a,a',a'' \in \bm a$ such that $\nu_a,\nu_{a'},\nu_{a''} \geq 1$ (cf.~\S~\ref{sec:flatness_nu_1}).
\end{enumerate}

\subsection{Three maximal tori at a marked point}
\label{sec:flatness_nu_3}

Together with Lem.~\ref{lem:base_change},
the case where $\nu_a \geq 3$ is a consequence of the following:

\begin{prop}
	\label{prop:flatness_high_order}

	Suppose that $s \geq 3$,
	and that $L_{s-2} = T$.
	Then the $G$-moment map
	\begin{equation}
		\label{eq:g_moment_map_single_orbit}
		\mu' = \mu'_{\mc O'} \cl \mc O' \lra \mf g^{\dual} \simeq \mf g,
		\qquad \mc A \lmt \Res(\mc A),
	\end{equation}
	is \emph{faithfully flat}.
\end{prop}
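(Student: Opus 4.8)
The plan is to exploit the structure results recalled in~\S\ref{sec:extended_orbits} to reduce the faithful flatness of~\eqref{eq:g_moment_map_single_orbit} to a computation on the Birkhoff orbit $\check{\mc O}' = \on{Bir}_s.(\dif Q')$, where the hypothesis $s \geq 3$ and $L_{s-2}$ a maximal torus makes the geometry especially transparent. First I would use the decoupling symplectomorphism~\eqref{eq:decoupling} to identify $\wt{\mc O}' \simeq \on T^*\!G \ts \check{\mc O}'$, and the reduction~\eqref{eq:orbit_from_extended_orbit} to write $\mc O'$ as the symplectic quotient $(\wt\mu'_2)^{-1}(-\Lambda') \bs\!\!\bs\!\!\bs L_s$. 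Under this identification the residue map $\mu'$ on $\mc O'$ becomes (the descent of) the first component $\wt\mu'_1 \cl \bigl((g,\Lambda),\dif Q\bigr) \mt \Lambda$, composed with the $\Ad_g$-twist coming from~\eqref{eq:decoupling}; so surjectivity and equidimensionality of $\mu'$ reduce to the corresponding properties for $\Lambda$ as a function on $(\wt\mu'_2)^{-1}(-\Lambda')$, modulo the $L_s$-action.

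Next I would analyse $\wt\mu'_2\bigl((g,\Lambda),\dif Q\bigr) = \check\mu'(\dif Q) - \pi'(\Ad_g(\Lambda))$ from~\eqref{eq:second_component_moment_map}. Because $L_{s-2}$ (hence $L_{s-1} = L' = L_{Q'}$, hence $L_s$) is a maximal torus $T$, the projection $\pi' \cl \mf g^{\dual} \thra \mf t^{\dual}$ is onto $\mf t$, and $\Lambda$ ranges over all of $\mf g$; the constraint $\wt\mu'_2 = -\Lambda'$ then reads $\pi'(\Ad_g(\Lambda)) = \check\mu'(\dif Q) + \Lambda'$. The key point is that $s \geq 3$ forces $\dim \check{\mc O}'$ to be large: $\check{\mc O}'$ contains the coefficient spaces attached to $\varpi^{-2},\dots,\varpi^{-s}$, the innermost of which ($\varpi^{-2}\dif\varpi$-slot) has stabilizer exactly $L_{s-2} = T$, so $\dim\check{\mc O}' \geq 2\dim(\mf g/\mf t)$ already from the two highest-order slots, plus lower contributions. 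This gives enough room to solve $\pi'(\Ad_g(\Lambda)) = \check\mu'(\dif Q) + \Lambda'$ for $g$ (or for $\dif Q$) for \emph{every} prescribed value of $\Lambda$, which is exactly surjectivity of $\mu'$; and a dimension count — $\dim\mc O' = \dim G + \dim\check{\mc O}' - 2\dim T$ versus $\dim\mf g$, with $\dim\check{\mc O}' \geq 2(\dim G - \dim T)$ and an extra slack when $s\geq 3$ — shows the generic fibre has the expected dimension $\dim\mc O' - \dim\mf g \geq 0$. Then I invoke `miracle flatness'~\cite[Thm.~26.2.11]{vakil_2024_the_rising_sea_foundations_of_algebraic_geometry} (as recalled in~\S\ref{sec:interlude}): a dominant morphism of irreducible nonsingular varieties with equidimensional fibres is faithfully flat, and $\mc O'$ is irreducible and nonsingular by Prop.~\ref{prop:affine_orbits} and homogeneity.

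The main obstacle I anticipate is making the fibre-dimension estimate tight and uniform: one must check that the scheme-theoretic fibres of $\mu'$ are \emph{all} of dimension exactly $\dim\mc O' - \dim G$, not merely the generic one, and this requires controlling how the stabilizer of $(g,\Lambda,\dif Q)$ under the relevant group can jump as $\Lambda$ degenerates (e.g.\ when $\Lambda$ becomes noncentral-but-singular). The cleanest route is probably to show directly that $\mu'$ is a \emph{submersion} onto $\mf g^{\dual}$ — equivalently that $d\mu'$ is everywhere surjective — by a Lie-theoretic computation of $\on{im}(d\mu')^\perp$ as an $\Ad$-stabilizer that is forced to be trivial once $L_{s-2} = T$ and $s\geq 3$; then flatness with equidimensional fibres is automatic and one only separately checks surjectivity on points (which follows since $\mu'$ is $G$-equivariant with $0 = \sum$-type image containing the semisimple locus, and the image is closed). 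I would first attempt the submersion argument, falling back on the explicit dimension count through~\eqref{eq:decoupling}--\eqref{eq:orbit_from_extended_orbit} if the infinitesimal computation proves unwieldy.
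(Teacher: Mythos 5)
Your high-level strategy matches the paper's: use the decoupling~\eqref{eq:decoupling} and the quotient~\eqref{eq:orbit_from_extended_orbit} to transfer the question from $\mc O'$ to the Birkhoff orbit $\check{\mc O}'$, and then conclude by miracle flatness. Indeed the paper isolates exactly this reduction as Lem.~\ref{lem:flatness_high_order}: if $\check\mu' \cl \check{\mc O}' \to (\mf l')^{\dual}$ is surjective with equidimensional fibres, so is $\mu'$, because in the decoupling the fibre $(\mu')^{-1}(X)$ is identified with $(G \ts (\check\mu')^{-1}(X'))/L_s$ with $L_s$ acting freely.

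The genuine gap is that you never actually prove surjectivity of the Birkhoff moment map, which is the substantive content of the proposition. Your first route (``dimension count gives enough room to solve'') is a heuristic, not a proof: nonnegative expected fibre dimension does not imply nonempty fibres. Your fallback route (show $\mu'$ is a submersion, then note the image is $G$-invariant and ``closed, containing the semisimple locus'') is worse: a submersion has \emph{open} image, and moment maps for noncompact reductive group actions are typically not proper, so there is no reason the image should be closed. Nor is it automatic that the image contains all semisimple elements — that is precisely the surjectivity that needs to be established. In short, your plan replaces the hard step with two claims that are either unproved or false in general.

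The paper's proof fills this gap constructively. Choosing opposite Borel subalgebras $\mf b^\pm \supseteq \mf t = \mf l'$ and using Yamakawa's cotangent splitting $\check{\mc O}' \simeq \on T^* \check{\mf n}^+$ with $\check{\mf n}^+ = \check{\mf n}^+_1 \ts \dm \ts \check{\mf n}^+_{s-2}$ (note: empty exactly when $s \leq 2$, which is where the hypothesis $s \geq 3$ enters), the $T$-moment map on each $\on T^*\check{\mf n}^+_i$ is computed explicitly as a sum $\sum_j \sum_{\phi_i^+} [X_j^{(\alpha)}, A_j^{(-\alpha)}]$ over root-line components; surjectivity then follows from the identities $\bops_{i=1}^{s-2}\mf n^\pm_i = [\mf b^\pm,\mf b^\pm] = \bops_{\Phi^\pm}\mf g_\alpha$ and $\mf t = \bops_{\Phi^+}[\mf g_\alpha,\mf g_{-\alpha}]$. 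That is a short and completely explicit Lie-theoretic calculation, and it is what your plan is missing: you correctly located where the argument should live (the Birkhoff orbit, via~\eqref{eq:birkhoff_moment_map}), but the transition from ``this should be possible'' to an actual surjectivity proof requires the root-space coordinates, not a dimension or submersion argument. Your worry about stabilizer jumps along the fibres is also somewhat misplaced: in the reduction the only group action one quotients by is the free $L_s$-action on $G \ts \check{\mc O}'$, so no jumping occurs there; the question of equidimensional fibres is entirely absorbed into $\check\mu'$, exactly as Lem.~\ref{lem:flatness_high_order} says.
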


\begin{proof}
	We prove that~\eqref{eq:birkhoff_moment_map} is surjective,
	and omit the proof that it has equidimensional fibres:
	the conclusion follows from Lem.~\ref{lem:flatness_high_order}.

	Let $\Phi^\pm \sse \Phi$ be the subsystems of positive/negative roots determined by $\mf b^\pm$.
	For $i \in \set{1,\dc,s}$,
	denote also by $\phi_i \sse \Phi$ the Levi subsystem corresponding to $\mf l_i$,
	and let $\phi^\pm_i \ceqq \Phi^\pm \cap \phi_i$ be the corresponding systems of positive/negative roots in $\phi_i$.
	Then there are also:
	(i) disjoint unions
	\begin{equation}
		\phi_{i-1}
		= \nu_i^- \cup \phi_i \cup \nu^+_i,
		\qquad \nu^\pm_i
		\ceqq \phi^\pm_{i - 1} \sm \phi_i;
	\end{equation}
	and (ii) nested triangular decompositions
	\begin{equation}
		\mf l_{i - 1}
		= \mf n^-_i \ops \mf l_i \ops \mf n^+_i,
		\qquad \mf n^\pm_i
		\ceqq \bops_{\nu^\pm_i} \mf g_\alpha,\fn{
			The nilpotent subalgebras $\mf n_i \sse \mf l_{i-1}$ are \emph{not} the nilradicals $\mf u^\pm_i \sse \mf p^\pm_i$ of the parabolic subalgebras $\mf p^\pm_i \sse \mf g$ of~\S~\ref{sec:quantum_orbits}:
			the latter rather satisfy $\mf g = \mf u^-_i \ops \mf l_i \ops \mf u^+_i$ (cf.~Cor.-Def.~\ref{cor:cotangent_splitting_2}).
		}
	\end{equation}
	invoking the root lines $\mf g_\alpha \sse \mf g$---%
	and setting $\mf l_0 \ceqq \mf g$,
	cf.~\cite[Def.~7.2]{boalch_2014_geometry_and_braiding_of_stokes_data_fission_and_wild_character_varieties}.
	Now define
	\begin{equation}
		\label{eq:nested_nilradical}
		\check{\mf n}^\pm_i
		\ceqq \Biggl\{ \, X
		= \sum_{j = 1}^{s-i-1} X_j \varpi^j \,\, \Biggl| \,\, X_j \in \mf n^\pm_i \, \Biggr\} \sse \mf{bir}_{s-i},
		\qquad i \in \set{1,\dc,s-2},
	\end{equation}
	and let $T$ act as usual.
	By~\cite[Cor.~3.2]{yamakawa_2019_fundamental_two_forms_for_isomonodromic_deformations},
	there is a $T$-equivariant symplectomorphism
	\begin{equation}
		\label{eq:cotangent_splitting}
		\check{\mc O}' \lxra{\simeq} \on T^* \! \check{\mf n}^+,
		\qquad \check{\mf n}^+
		\ceqq \check{\mf n}^+_1 \ts \dm \ts \check{\mf n}^+_{s-2},
	\end{equation}
	and we compute a $T$-moment map for each factor of $\on T^* \! \check{\mf n}^+ \simeq \prod_{i = 1}^{s-2} \bigl( \on T^*\!\check{\mf n}^+_i \bigr)$.
	As in~\S~\ref{sec:tcla_duality},
	identify
	\begin{equation}
		\label{eq:dual_nested_nilradical}
		(\check{\mf n}^\pm_i)^{\dual} \simeq \Biggl\{ \, \dif Q \,\, \Biggl| \,\, Q
		= \sum_{j = 1}^{s-i-1} A_j \frac{\varpi^{-j}}{-j} \, ,
		\quad A_j \in \mf n^\mp_i \, \Biggr\}.
	\end{equation}
	Then,
	in the notation of~\eqref{eq:nested_nilradical} +~\eqref{eq:dual_nested_nilradical},
	the moment map on the $i$-th factor which vanishes at the origin reads
	\begin{equation}
		\check\mu'_i \cl \check{\mf n}^+_i \ts (\check{\mf n}^+_i)^{\dual}
		\simeq T^*\check{\mf n}^+_i \lra \mf t^{\dual} \simeq \mf t,
		\qquad (X,\dif Q)
		\lmt \sum_{j = 1}^{s-i-1} \sum_{\phi^+_i} \bigl[ X_j^{(\alpha)}, A_j^{(-\alpha)} \bigr],
	\end{equation}
	invoking the root-line components of $X_j = \sum_{\phi_i^+} X_j^{(\alpha)} \in \mf n^+_i$ and $A_j = \sum_{\phi^-_i} A_j^{(\alpha)} \in \mf n^-_i$.
	Now the surjectivity of the moment map for the diagonal action on all factors is due to the identity
	\begin{equation}
		\bops_{i = 1}^{s-2} \mf n^\pm_i = [\mf b^\pm,\mf b^\pm] = \bops_{\Phi^\pm} \mf g_\alpha,
	\end{equation}
	because,
	in turn,
	one has $\mf t = \bops_{\Phi^\pm} [\mf g_\alpha,\mf g_{-\alpha}]$.
\end{proof}

\begin{lemm}
	\label{lem:flatness_high_order}

	If~\eqref{eq:birkhoff_moment_map} is surjective,
	with equidimensional fibres,
	then the same holds for~\eqref{eq:g_moment_map_single_orbit}.
\end{lemm}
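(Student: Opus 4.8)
The plan is to trace the residue map $\mu'$ of~\eqref{eq:g_moment_map_single_orbit} through the chain of symplectomorphisms of~\S~\ref{sec:extended_orbits}, so as to reduce its surjectivity and fibre-equidimensionality to those of $\check\mu'$. First I would note that, under the decoupling~\eqref{eq:decoupling} and the left trivialization $\on T^*\!G \simeq G \ts \mf g^\dual$, a point $\bigl( (g,\Lambda),\dif Q \bigr)$ of $\wt{\mc O}'$ maps to $\mc A = \Lambda \varpi^{-1} \dif \varpi + \Ad^\dual_g(\dif Q)$; since the splitting $\mf g_s^\dual \simeq \mf g^\dual \ops \mf{bir}_s^\dual$ of~\S~\ref{sec:birkhoff} is $G$-stable, $\Ad^\dual_g(\dif Q)$ remains an irregular part, so $\Res(\mc A) = \Lambda = \wt\mu'_1\bigl( (g,\Lambda),\dif Q \bigr)$. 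As $\wt\mu'_1$ is $L_s$-invariant on $(\wt\mu'_2)^{-1}(-\Lambda')$, it descends along~\eqref{eq:orbit_from_extended_orbit} to exactly $\mu'$. Feeding in~\eqref{eq:second_component_moment_map} --- and recalling that $\Lambda' \in \mf l'$, since the residue of a UTS principal part commutes with the coefficients of $Q'$ --- I would then identify, for each $\xi \in \mf g \simeq \mf g^\dual$,
\[
	(\mu')^{-1}(\xi) \simeq F_\xi^{-1}(0) \big/ L_s, \qquad F_\xi \cl \check{\mc O}' \ts G \lra (\mf l')^\dual, \quad (\dif Q,g) \lmt \check\mu'(\dif Q) + \Lambda' - \pi'\bigl( \Ad_g(\xi) \bigr),
\]
with $L_s$ acting freely (as in~\eqref{eq:product_action_extended_orbit}, via $t \cdot (\dif Q,g) = (\Ad_t \dif Q, tg)$, which preserves the constraint because $t \in L_s$ fixes $\Lambda'$ and commutes with $\pi'$).

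Surjectivity of $\mu'$ is then immediate from that of $\check\mu'$: given $\xi$, pick $\dif Q$ with $\check\mu'(\dif Q) = \pi'(\xi) - \Lambda'$, so $(\dif Q,e) \in F_\xi^{-1}(0)$ descends to a point of $(\mu')^{-1}(\xi)$. For the fibre dimensions, I would factor $F_\xi = \sigma_\xi \circ (\check\mu' \ts \on{id}_G)$, where $\sigma_\xi \cl (\mf l')^\dual \ts G \to (\mf l')^\dual$ sends $(\eta,g) \mapsto \eta + \Lambda' - \pi'(\Ad_g(\xi))$. The zero-fibre $\sigma_\xi^{-1}(0)$ is the graph of $g \mapsto \pi'(\Ad_g(\xi)) - \Lambda'$, hence an irreducible subvariety isomorphic to $G$. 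Since $\check{\mc O}'$ is smooth and $\check\mu'$ is surjective with equidimensional fibres, `miracle flatness' (cf.~\S~\ref{sec:interlude}) makes $\check\mu'$, and thus $\check\mu' \ts \on{id}_G$, flat; base-change along $\sigma_\xi^{-1}(0) \hra (\mf l')^\dual \ts G$ then shows $F_\xi^{-1}(0) \to \sigma_\xi^{-1}(0) \simeq G$ is flat with all fibres of dimension $\dim \check{\mc O}' - \dim \mf l'$, whence $F_\xi^{-1}(0)$ is pure of dimension $\dim G + \dim \check{\mc O}' - \dim \mf l'$, independent of $\xi$. Dividing by the free $L_s$-action, every fibre of $\mu'$ has dimension $\dim G + \dim \check{\mc O}' - \dim \mf l' - \dim L_s$; this matches $\dim \mc O' - \dim \mf g$ coming from~\eqref{eq:orbit_from_extended_orbit}, so a final application of miracle flatness (now with $\mc O'$ and $\mf g$ smooth) upgrades `surjective with equidimensional fibres' to `faithfully flat'.

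The only delicate point is the bookkeeping in the first paragraph: checking that $\mu'$ is genuinely the descent of $\wt\mu'_1$, and that the defining equation of $F_\xi^{-1}(0)$ is precisely the restriction of~\eqref{eq:second_component_moment_map} to the locus $\Lambda = \xi$. There is no substantial geometric difficulty left once $\check\mu'$ is known to be onto --- which is assumed here, and is the real content of Prop.~\ref{prop:flatness_high_order}.
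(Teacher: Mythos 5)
Your proof is correct and takes essentially the same route as the paper's: identify the fibre $(\mu')^{-1}(\xi)$ via the decoupling~\eqref{eq:decoupling} and the reduction~\eqref{eq:orbit_from_extended_orbit} as the free $L_s$-quotient of a fibred product over $G$ whose fibres are level sets of $\check\mu'$, then read off equidimensionality. The one small difference is cosmetic: the paper asserts directly that the dimension of $\bigl( G \ts (\check\mu')^{-1}(X') \bigr) / L_s$ is independent of $X$, while you justify it more carefully through the factorization $F_\xi = \sigma_\xi \circ (\check\mu' \ts \on{id}_G)$ plus miracle flatness and base change — a legitimate (and cleaner) way to fill in a step that the paper leaves implicit; your closing remark about upgrading to faithful flatness goes one step beyond the lemma's statement, but is exactly what Prop.~\ref{prop:flatness_high_order} needs.
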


\begin{proof}[Proof postponed to~\ref{proof:lem_flatness_high_order}]
\end{proof}

\begin{lemm}
	\label{lem:base_change}

	Let $M'_1$ and $M'_2$ be two varieties,
	and $\varphi_i \cl M'_i \to V$ two algebraic maps to a vector space $V$.
	If $\varphi_1$ is faithfully flat,
	then the same holds for the sum
	\begin{equation}
		\label{eq:base_change}
		\varphi_1 + \varphi_2 \cl M'_1 \ts M'_2 \lra V,
		\qquad (x_1,x_2) \lmt \varphi_1(x_1) + \varphi_2(x_2).
	\end{equation}
\end{lemm}

\begin{proof}[Proof omitted]
\end{proof}

\subsection{Three maximal tori at two marked points}
\label{sec:flatness_nu_2}

To treat the second case,
we construct Darboux charts on coadjoint orbits for TCLAs (cf.~Thm.~\ref{thm:darboux_coordinates}),
leading to Cor.~\ref{cor:composite_moment_2}.

\subsubsection{}
\label{sec:darboux_coordinates}

Analogously to the proof of Prop.~\ref{prop:flatness_high_order},
denote by $\mf n^\pm_i \sse \mf l_{i-1}$ the nilradicals of the parabolic subalgebras of $\mf l_{i-1}$ containing $\mf b^\pm \cap \mf l_{i-1}$,
whence $\mf l_i \sse \mf l_{i-1}$ is their Levi factor containing $\mf t$.
Furthermore,
if $N^\pm_i \sse L_{i-1}$ are the unipotent radicals of the parabolic subgroups of $L_{i-1}$ containing $B^\pm \cap L_{i-1}$,
so that $\Lie(N_i^\pm) = \mf n^\pm_i$,
define also
\begin{equation}
	\label{eq:levi_tclgs}
	\wt L_i
	\ceqq L_i (\ms O_{s-i}) \sse G_{s-i},
	\qquad i \in \set{0,\dc,s},
\end{equation}
and
\begin{equation}
	\label{eq:nilradical_tclgs}
	\wt N^\pm_i
	\ceqq N^\pm_i (\ms O_{s-i+1}) \sse \wt L_{i-1},
	\qquad i \in \set{1,\dc,s},
\end{equation}
with Lie algebras $\wt{\mf l}_i \ceqq \Lie \bigl( \wt L_i \bigr)$ and $\wt{\mf n}^\pm_i \ceqq \Lie \bigl( \wt N^\pm_i \bigr)$.
Then:

\begin{theo}[cf.~\cite{yamakawa_2019_fundamental_two_forms_for_isomonodromic_deformations}, Thm.~3.1]
	\label{thm:darboux_coordinates}

	There are:
	\begin{enumerate}
		\item
		      a finite open cover $\mc O' = \bigcup_i V'_i$,
		      by $L_s$-invariant symplectic subvarieties;

		\item
		      and $L_s$-equivariant symplectomorphisms
		      \begin{equation}
			      \label{eq:darboux_chart}
			      V'_i
			      \lxra{\simeq} \on T^* \! \wt N^+,
			      \qquad \wt N^+
			      \ceqq \wt N^+_1 \ts \dm \ts \wt N^+_s.
		      \end{equation}
	\end{enumerate}
\end{theo}

\begin{proof}
	Decompose~\eqref{eq:normal_form_reindexing} as $\mc A' = \mc A'_{s-1} + \mc A'_{\on{top}}$,
	with
	\begin{equation}
		\label{eq:subleading_term}
		\mc A'_{s-1}
		\ceqq \sum_{j = 0}^{s-2} A'_j \varpi^{-j-1} \dif \varpi \in \wt{\mf l}_1^{\,\dual},
	\end{equation}
	in the notation of~\eqref{eq:leading_term},
	and under the usual dualities for TCLAs.
	Moreover,
	let $\mc O'_{s-1} \ceqq \wt L_1.\mc A'_{s-1}$ be the $\Ad^{\dual}_{\wt L_1}$-orbit of the `subleading' term of $\mc A'$.
	The statement follows inductively on the pole order,
	by Prop.~\ref{prop:recursion_for_darboux},
	as the images~\eqref{eq:recursive_embedding_image} cover $\mc O'$ upon replacing $\mc A'$ by its Weyl-translated.
\end{proof}

\begin{prop}
	\label{prop:recursion_for_darboux}

	Denote by $\Ad^{\flat}$ the coadjoint $\wt N^+_1$-action,
	and trivialize $\on T^*\!\wt N^+_1$ via left translations.
	Then:
	\begin{enumerate}
		\item
		      the map
		      \begin{equation}
			      \label{eq:recursive_embedding}
			      \iota \cl \bigl( \, \wt N^+_1 \ts (\wt{\mf n}^+_1)^{\dual} \bigr) \ts \mc O'_{s-1}
			      \simeq \on T^*\!\wt N^+_1 \ts \mc O'_{s-1} \lra \mf g_s^{\dual},
		      \end{equation}
		      defined by
		      \begin{equation}
			      \bigl( (\bm u^+,\bm Y),\mc A_{s-1} \bigr)
			      \lmt \Ad^{\dual}_{(\bm u^+)^{-1}} \bigl( \Ad^{\flat}_{\bm u^+}(\bm Y) + \mc A_{s-1} + \mc A'_{\on{top}} \bigr),
		      \end{equation}
		      yields an $L_1$-equivariant symplectic open embedding into the orbit $\mc O'$;

		\item
		      and the image of~\eqref{eq:recursive_embedding} is
		      \begin{equation}
			      \label{eq:recursive_embedding_image}
			      \mc O'(T)
			      \ceqq \Set{ \Ad^{\dual}_{\bm g^{-1}} (\mc A') | \bm g \in G_s \text{ and } \bm g(0) \in L_1 \cdot N^-_1 \cdot N^+_1 \sse G }.
		      \end{equation}
	\end{enumerate}
\end{prop}

\begin{proof}
	We first show that~\eqref{eq:recursive_embedding} takes values into $\mc O'$.
	Choose a triple
	\begin{equation}
		\bigl( (\bm u^+,\bm Y),\mc A_{s-1} \bigr) \in \bigl( \,\wt N^+_1 \ts (\wt{\mf n}^+_1)^{\dual} \bigr) \ts \mc O'_{s-1},
	\end{equation}
	as well as an element $\bm h \in \wt L_1$,
	such that $\mc A_{s-1} = \Ad^{\dual}_{\bm h^{-1}} \bigl( \mc A'_{s-1} \bigr)$.
	Denote also by the same symbol a lift of $\bm h$ in $\wt{\ul L}_1 \thra \wt L_1$,
	in the notation of~\eqref{eq:levi_tclgs_lift}.
	Then
	\begin{equation}
		\label{eq:adjoint_action_levi}
		\Ad^{\dual}_{\bm h^{-1}} \bigl( \mc A' \bigr)
		= \Ad^{\dual}_{\bm h^{-1}} \bigl( \mc A'_{s-1} \bigr) + \mc A'_{\on{top}}
		= \mc A_{s-1} + \mc A'_{\on{top}},
	\end{equation}
	as $L_1 = G^{A_{s-1}}$.
	Moreover,
	by Lem.~\ref{lem:algebraic_iso},
	there exists $\bm u^- \in \wt N^-_1$ such that
	\begin{equation}
		\label{eq:adjoint_action_nilradical}
		\Ad^\flat_{\bm u^+}(\bm Y) + \mc A_{s-1} + \mc A'_{\on{top}}
		= \Ad^{\dual}_{\bm u^-} \bigl( \mc A_{s-1} + \mc A'_{\on{top}} \bigr).
	\end{equation}
	Then~\eqref{eq:adjoint_action_levi}--\eqref{eq:adjoint_action_nilradical} yield
	\begin{equation}
		\Ad^{\dual}_{(\bm u^+)^{-1}} \bigl( \Ad^\flat_{\bm u^+}(\bm Y) + \mc A_{s-1} + \mc A'_{\on{top}} \bigr)
		= \Ad^{\dual}_{(\bm u^+)^{-1}\bm u^-} \bigl( \mc A_{s-1} + \mc A'_{\on{top}} \bigr)
		= \Ad^{\dual}_{\bm g^{-1}} \bigl( \mc A' \bigr),
	\end{equation}
	setting $\bm g \ceqq \bm h (\bm u^-)^{-1} \bm u^+ \in \wt L_1 \cdot \wt N^-_1 \cdot \wt N^+_1$.

	The resulting map is $L_1$-equivariant,
	and we show that it maps isomorphically onto~\eqref{eq:recursive_embedding_image}.
	In view of Lem.~\ref{lem:algebraic_iso},
	it is enough to prove that the map
	\begin{equation}
		\wt N^+_1 \ts \wt N^-_1 \ts \mc O'_{s-1} \lra \mc O'(T),
		\qquad \bigl( \bm u^+,\bm u^-,\mc A_{s-1} \bigr) \lmt \Ad^{\dual}_{(\bm u^+)^{-1} \bm u^-} \bigl( \mc A_{s-1} + \mc A'_{\on{top}} \bigr),
	\end{equation}
	is an isomorphism.
	To this end,
	given a group element $\bm g \in G_s$ such that
	\begin{equation}
		g \ceqq \bm g(0) = hu^-u^+ \in L_1 \cdot N^-_1 \cdot N^+_1,
	\end{equation}
	consider the factorization $\bm g = \bm h (\bm u^-)^{-1} \bm u^+$ provided by Lem.~\ref{lem:unique_factorization}.
	Then define
	\begin{equation}
		\mc A_{s-1} = \mc A_{s-1}(\bm g) \ceqq \Ad^{\dual}_{\bm h^{-1}} \bigl( \mc A' \bigr) - \mc A'_{\on{top}} \in \wt{\mf l}_1^{\, \dual}.
	\end{equation}
	This yields an algebraic map
	\begin{equation}
		\Set{ \bm g \in G_s | g \in L_1 \cdot N^-_1 \cdot N^+_1 } \lra \wt N^+_1 \ts \wt N^-_1 \ts \mc O'(T),
		\qquad \bm g \lmt \bigl( \bm u^+,\bm u^-,\mc A_{s-1} \bigr),
	\end{equation}
	which is invariant under the action of the $\Ad^{\dual}_{G_s}$-stabilizer of $\mc A'$ (as the latter is contained in $\wt{\ul L}_1$):
	it induces the desired inverse.
	Finally,
	the fact that~\eqref{eq:recursive_embedding} is a symplectic map follows from a computation quite similar to the proof of~\cite[Thm.~3.1]{yamakawa_2019_fundamental_two_forms_for_isomonodromic_deformations}.
	(We omit it.)
\end{proof}

\begin{lemm}
	\label{lem:unique_factorization}

	Choose $\bm g \in G_s$ such that $g \ceqq \bm g(0) \in L_1 \cdot N^-_1 \cdot N^+_1 \sse G$,
	and set
	\begin{equation}
		\label{eq:levi_tclgs_lift}
		\wt{\ul L}_1 \ceqq L_1 (\ms O_s)  \sse G_s.
	\end{equation}
	Then:
	\begin{enumerate}
		\item
		      there is a \emph{unique} factorization
		      \begin{equation}
			      \bm g
			      = \bm h \bm u^- \bm u^+,
			      \qquad \bm u^\pm \in \wt N^\pm_1,
			      \quad \bm h \in \wt{\ul L}_1;
		      \end{equation}

		\item
		      and the component maps $\bm g \mt \bm h$ and $\bm g \mt \bm u^\pm$ are $L_1$-equivariant \emph{algebraic} morphisms.
	\end{enumerate}
\end{lemm}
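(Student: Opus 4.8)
The plan is to deduce the statement from the classical open‑cell decomposition over $\mb C$, and then transport it to $\ms O_s$‑points. Over $\mb C$, since $P^-_1 = L_1 \lts N^-_1$, one has $L_1 \cdot N^-_1 \cdot N^+_1 = P^-_1 \cdot N^+_1 \eqqc \Omega$, the open cell of the parabolic pair $(P^+_1,P^-_1)$, and the multiplication morphism
\[
	m \cl L_1 \ts N^-_1 \ts N^+_1 \lxra{\simeq} \Omega \lhra G
\]
is an open immersion. It is equivariant both for conjugation by $L_1$ (which preserves $L_1$, $N^\pm_1$ and $\Omega$) and for left translation by $L_1$ (which alters only the first coordinate $h \in L_1$). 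Since $\ms O_s$ is local, a morphism $\bm g \cl \Spec \ms O_s \to G$ factors through the open subscheme $\Omega$ exactly when $\bm g(0) \in \Omega$; so the hypothesis of the lemma says precisely that $\bm g$ lies in $\Omega(\ms O_s)$.

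Next I would pass to $\ms O_s$‑points. As $\ms O_s$ is finite and free over $\mb C$, the Weil restriction $\Res_{\ms O_s \slash \mb C}$ exists for all the affine $\mb C$‑schemes involved, and it carries open immersions to open immersions (a standard property of Weil restriction along finite locally free morphisms). Applying it to the base change of $m$ therefore yields an open immersion
\[
	\wt{\ul L}_1 \ts \wt N^-_1 \ts \wt N^+_1 \lhra G_s,
	\qquad (\bm h,\bm u^-,\bm u^+) \lmt \bm h \bm u^- \bm u^+,
\]
with image the open subscheme $\Set{ \bm g \in G_s | \bm g(0) \in \Omega }$. An open immersion is a monomorphism whose inverse on its image is a morphism; hence for every $\bm g$ with $\bm g(0) \in \Omega$ the factorization $\bm g = \bm h \bm u^- \bm u^+$ exists and is unique, and the component maps $\bm g \mt \bm h$, $\bm g \mt \bm u^\pm$ are algebraic. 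Their $L_1$‑equivariance is inherited from that of $m$: conjugation by $a \in L_1 \sse G \sse G_s$ takes $\bm h\bm u^-\bm u^+$ to $(a\bm h a^{-1})(a\bm u^- a^{-1})(a\bm u^+a^{-1})$, and left translation by $a$ takes it to $(a\bm h)\bm u^-\bm u^+$ --- the form in which it is used in the proof of Prop.~\ref{prop:recursion_for_darboux}.

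Alternatively, to avoid invoking Weil restriction, the same conclusion follows by induction on $s$: the case $s=1$ is $m$ itself, and for the step $s \geq 2$ one lifts the factorization of the reduction of $\bm g$ modulo $\varpi^{s-1}$ along the (surjective, by smoothness of $G$) truncation $G_s \thra G_{s-1}$, so that the remaining correction lies in $K \ceqq \ker(G_s \thra G_{s-1}) \simeq \mf g$ --- an abelian subgroup on which the group law is addition, since $\varpi^{2(s-1)}=0$ in $\ms O_s$. Writing $\mf g = \mf n^-_1 \ops \mf l_1 \ops \mf n^+_1$ and pushing the three pieces of the correction to the right past the lifted factors reduces existence and uniqueness to the invertibility of an explicit $\mb C$‑linear endomorphism of $\mf g$ of the shape $(Z^0,Z^-,Z^+) \mt \Ad_{(u^+_0)^{-1}}\!\bigl( \Ad_{(u^-_0)^{-1}}(Z^0) + Z^- \bigr) + Z^+$, with $u^\pm_0 \in N^\pm_1$ fixed.

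This last point is where a small check is needed: the displayed endomorphism factors as the isomorphism $\mf l_1 \ops \mf n^-_1 \lxra{\simeq} \mf p^-_1$, $(Z^0,Z^-) \mt \Ad_{(u^-_0)^{-1}}(Z^0) + Z^-$ (an isomorphism because $N^-_1$ preserves $\mf p^-_1$ and acts trivially on $\mf p^-_1 \slash \mf n^-_1$), followed by $W + Z^+ \mt \Ad_{(u^+_0)^{-1}}(W) + Z^+$, which is an isomorphism $\mf p^-_1 \ops \mf n^+_1 \lxra{\simeq} \mf g$ since $\mf g = \mf p^-_1 \ops \mf n^+_1$ and $N^+_1$ normalizes $\mf n^+_1$. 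I expect this linear‑algebra verification --- and, on the Weil‑restriction route, only the cited behaviour of $\Res$ on open immersions --- to be the sole place requiring care; everything else is bookkeeping.
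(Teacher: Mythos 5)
Your proof is correct, and both of your routes differ usefully from the one in the paper. The paper's argument is a hands-on recursion: it writes $\bm g = \exp\bigl(\sum_i X_i\varpi^i\bigr)\cdot g$ (and likewise for $\bm h,\bm u^\pm$), collects the Birkhoff factors to the left via the Adjoint action and BCH, and then solves the resulting coefficient equations $X_i + R_i = \Ad_{hu^-}(Y^+_i) + \Ad_h(Y^-_i) + X'_i$ degree by degree, using the direct-sum decomposition $\mf g = \Ad_{u^-}(\mf n^+_1) \ops \mf n^-_1 \ops \mf l_1$; the lower-order bookkeeping ($R_i$ depending only on indices $<i$) is asserted but not unpacked. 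Your Weil-restriction route is more conceptual and arguably shorter: it replaces the recursion entirely by the open-cell immersion $L_1\ts N^-_1\ts N^+_1 \lxra{\simeq} \Omega \lhra G$ over $\mb C$, plus the two standard facts that a map $\Spec\ms O_s \to G$ factors through an open subscheme iff the closed point does (because $\ms O_s$ is Artinian local with residue field $\mb C$), and that $\Res_{\ms O_s/\mb C}$ preserves open immersions in this radicial situation --- this buys you existence, uniqueness, algebraicity, and $L_1$-equivariance all at once, with no expansion to track. Your inductive alternative is essentially a cleaned-up version of the paper's recursion, organized by truncation order rather than coefficient index; placing the correction in the abelian kernel $K=\ker(G_s\thra G_{s-1})\simeq\mf g$ and gathering it to the right leads to the endomorphism $(Z^0,Z^-,Z^+)\mt \Ad_{(u^+_0)^{-1}}\bigl(\Ad_{(u^-_0)^{-1}}(Z^0)+Z^-\bigr)+Z^+$, which is the ``right-ordered'' counterpart of the paper's $(Y^+,Y^-,X')\mt\Ad_{hu^-}(Y^+)+\Ad_h(Y^-)+X'$; your factorization of it through $\mf p^-_1$ correctly verifies invertibility, and your approach makes the ``only lower-order terms enter'' claim automatic rather than asserted. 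Both of your arguments are sound and could be substituted for the paper's proof.
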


\begin{proof}{Proof postponed to~\ref{proof:lem_unique_factorization}.}
\end{proof}

\begin{lemm}
	\label{lem:algebraic_iso}

	Choose elements $\bm u^- \in \wt N^-_1$ and $\mc A_{s-1} \in \mc O'_{s-1}$.
	Then:
	\begin{enumerate}
		\item
		      the following vector lies in $(\wt{\mf n}^+_1)^{\dual}$:
		      \begin{equation}
			      \label{eq:nilpotent_element}
			      \bm Y'
			      \ceqq \Ad^{\dual}_{\bm u^-} \bigl( \mc A_{s-1} + \mc A'_{\on{top}} \bigr) - \bigl( \mc A_{s-1} + \mc A'_{\on{top}} \bigr);
		      \end{equation}

		\item
		      and the map
		      \begin{equation}
			      \label{eq:algebraic_iso}
			      \wt N^-_1 \ts \mc O'_{s-1} \lra (\wt{\mf n}^+_1)^{\dual} \ts \mc O'_{s-1},
			      \qquad \bigl( \bm u^-,\mc A_{s-1} \bigr) \lmt \bigl( \bm Y',\mc A_{s-1} \bigr),
		      \end{equation}
		      is an $L_1$-equivariant algebraic \emph{isomorphism}.
	\end{enumerate}
\end{lemm}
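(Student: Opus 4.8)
The plan is to read~\eqref{eq:algebraic_iso} as a shifted orbit map, and to split both assertions into an elementary computation plus a closedness argument. Throughout, write $\mc B \ceqq \mc A_{s-1} + \mc A'_{\on{top}}$, fix the triangular decomposition $\mf g = \mf n^-_1 \ops \mf l_1 \ops \mf n^+_1$ attached to the parabolic $\mf p^-_1 = \mf l_1 \ops \mf n^-_1$ with Levi $\mf l_1$, and accordingly $\mf g_s = \wt{\mf n}^-_1 \ops (\mf l_1 \ots \ms O_s) \ops \wt{\mf n}^+_1$, so that $(\wt{\mf n}^+_1)^\dual$ is the direct summand of $\mf g_s^\dual$ annihilating $\mf p^-_1 \ots \ms O_s = \wt{\mf n}^-_1 \ops (\mf l_1 \ots \ms O_s)$. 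Two facts will be used: (a) all coefficients of $\mc B$ lie in $\mf l_1$ — those of $\mc A_{s-1} \in \mc O'_{s-1} \sse \wt{\mf l}_1^\dual$ by definition, and $A'_{s-1} \in \mf t \sse \mf l_1$ — and $\mf l_1$ is orthogonal to $\mf n^-_1$ for the chosen pairing; (b) $\mc B = \Ad^\dual_{\bm h^{-1}}(\mc A')$ for some lift $\bm h \in \wt{\ul L}_1 = L_1(\ms O_s)$ of an element of $\wt L_1$ carrying $\mc A'_{s-1}$ to $\mc A_{s-1}$, and $L_1 = G^{A'_{s-1}}$ fixes $\mc A'_{\on{top}}$.

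For~\eqref{eq:nilpotent_element}: since $\mf p^-_1 \ots \ms O_s$ is a subalgebra normalised by $\wt N^-_1 = N^-_1(\ms O_s)$ and $[\mf n^-_1,\mf p^-_1] \sse \mf n^-_1$, for $\bm u^- = \exp(\bm X^-) \in \wt N^-_1$ and any $Y \in \mf p^-_1 \ots \ms O_s$ the element $\bigl( \Ad_{(\bm u^-)^{-1}} - 1 \bigr)(Y)$ lies in $\wt{\mf n}^-_1$. Hence $\braket{\bm Y',Y} = \braket{\mc B, \bigl( \Ad_{(\bm u^-)^{-1}} - 1 \bigr)(Y)} = 0$ by (a), so $\bm Y' \in (\wt{\mf n}^+_1)^\dual$. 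The same computation shows that $\Ad^\dual_{\bm u^-}$ preserves the summand $(\wt{\mf n}^+_1)^\dual$, and together with~\eqref{eq:nilpotent_element} this makes the affine subspace $\mc B + (\wt{\mf n}^+_1)^\dual$ of $\mf g_s^\dual$ into a $\wt N^-_1$-stable one.

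For~\eqref{eq:algebraic_iso}: as its second component is the identity, it suffices to show that for each fixed $\mc A_{s-1}$ the orbit map $\bm u^- \lmt \Ad^\dual_{\bm u^-}(\mc B)$ is an algebraic isomorphism from $\wt N^-_1$ onto the coset $\mc B + (\wt{\mf n}^+_1)^\dual$. By the previous paragraph its image is the $\wt N^-_1$-orbit of $\mc B$, which lies in this coset, and which — $\wt N^-_1$ being unipotent and $\mc B + (\wt{\mf n}^+_1)^\dual$ affine — is Zariski-closed there by the Kostant--Rosenlicht theorem. Its dimension equals $\dim \wt N^-_1 = s\cdot\dim\mf n^-_1 = s\cdot\dim\mf n^+_1 = \dim(\wt{\mf n}^+_1)^\dual$, because the stabiliser $\wt N^-_1 \cap G_s^{\mc B}$ is trivial: conjugating~\eqref{eq:stabilizer_nuts_principal_parts} by $\bm h$ gives $G_s^{\mc B} \sse L_1(\ms O_s)$, while $\wt N^-_1 \cap L_1(\ms O_s) = \{1\}$ since $P^-_1 = L_1 \lts N^-_1$. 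A Zariski-closed, irreducible, full-dimensional subset of the irreducible affine space $\mc B + (\wt{\mf n}^+_1)^\dual$ must be all of it; and in characteristic $0$ an orbit map with trivial stabiliser is an isomorphism onto its orbit. Composing with the translation by $-\mc B$ yields $\wt N^-_1 \lxra{\simeq} (\wt{\mf n}^+_1)^\dual$, and taking the product with $\Id_{\mc O'_{s-1}}$ gives~\eqref{eq:algebraic_iso}; the $L_1$-equivariance is clear, since $L_1$ normalises $N^\pm_1$, fixes $\mc A'_{\on{top}}$, and preserves $\mc O'_{s-1}$ and $(\wt{\mf n}^+_1)^\dual$.

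The only delicate point is the passage from the (evident) bijectivity of the orbit map to an \emph{algebraic} isomorphism; it is dispatched by the characteristic-zero fact about orbit maps, once one knows the stabiliser is trivial — which is exactly where the explicit shape~\eqref{eq:stabilizer_nuts_principal_parts} of the $G_s$-stabiliser enters. A more hands-on alternative, closely parallel to the proof of Thm.~3.1 of~\cite{yamakawa_2019_fundamental_two_forms_for_isomonodromic_deformations}, is to solve $\Ad^\dual_{\exp(\bm X^-)}(\mc B) = \mc B + \bm Y'$ recursively in the $\varpi$-adic degree: the recursion is block-triangular with diagonal blocks $\eval[1]{\ad_{A'_{s-1}}}_{\mf n^-_1}$, invertible because $\mf l_1 = \mf g^{A'_{s-1}}$, which also makes transparent the symplecticity statement invoked in Prop.~\ref{prop:recursion_for_darboux}.
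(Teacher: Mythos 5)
Your argument is correct, and it takes a genuinely different route from the one the paper uses. The paper explicitly constructs the inverse of~\eqref{eq:algebraic_iso}: it picks a one-parameter subgroup $\lambda \cl \mb C^{\ts} \to L_1$ whose $\ad_\theta$-grading splits $\mf g$ into $\mf l_1 = \mf g_0$ and $\mf n^\pm_1 = \mf g_{\gtrless 0}$, then superposes this grading with the $\varpi$-adic one at a high enough weight $N$ to get a $\mb Z$-grading on the loop algebra in which the equation $\bm Y' = \Ad^\dual_{e^{\bm X}}(\ul{\mc A}_{s-1}) - \ul{\mc A}_{s-1}$ becomes a genuinely triangular recursion with diagonal blocks given by $\ad_{A'_{s-1}}$ on the nonzero graded pieces, invertible because $\mf l_1 = \mf g^{A'_{s-1}}$. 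Your argument replaces this explicit inverse by a soft one: you observe that the orbit map $\bm u^- \mt \Ad^\dual_{\bm u^-}(\mc B)$ has trivial stabiliser (by~\eqref{eq:stabilizer_nuts_principal_parts} conjugated into $L_1(\ms O_s)$, which meets $\wt N^-_1$ only in the identity), that it lands in the $\wt N^-_1$-stable affine space $\mc B + (\wt{\mf n}^+_1)^\dual$, that its image is closed by Kostant--Rosenlicht, and that the dimension count forces it to be onto; separability in characteristic zero then upgrades the resulting bijection $\wt N^-_1 \times \mc O'_{s-1} \to (\wt{\mf n}^+_1)^\dual \times \mc O'_{s-1}$ to an algebraic isomorphism. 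The trade-off is clear: the paper's construction is more computational but produces an explicit inverse (useful, e.g., for the symplecticity check in Prop.~\ref{prop:recursion_for_darboux}), while yours is cleaner and exposes the structural reason the map is an isomorphism (free unipotent action on an affine space of the right dimension).

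One small caveat about your closing remark: solving $\Ad^\dual_{\exp(\bm X^-)}(\mc B) = \mc B + \bm Y'$ \emph{literally} in the $\varpi$-adic degree is not block-triangular with linear diagonal blocks. The top pole-order equation reads $Y'_{s-1} = \bigl( e^{-\ad_{X_0}} - 1 \bigr)(A'_{s-1})$, which is a \emph{nonlinear} (though still bijective) polynomial map $\mf n^-_1 \to \mf n^-_1$; only once $X_0$ is solved for do the remaining equations become affine with linear part $\ad_{A'_{s-1}}$. This is precisely why the paper introduces the auxiliary $\ad_\theta$-grading weighted by a large $N$: it refines the $\varpi$-adic filtration so that \emph{every} homogeneous equation, including the constant-term one, is linear in the new unknown. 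Your main argument does not use the remark, so this is only a side note.
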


\begin{proof}{Proof postponed to~\ref{proof:lem_algebraic_iso}.}
\end{proof}

\begin{coro}
	\label{cor:composite_moment_2}

	Suppose that $L_s = T$.
	Then the composition
	\begin{equation}
		\pi_s \circ \mu' \cl \mc O' \lra \mf t^{\dual} \simeq \mf t,
		\qquad \mc A \lmt \pi_s \bigl( \Res(\mc A) \bigr),
	\end{equation}
	is \emph{faithfully flat}---%
	using the $G$-moment map~\eqref{eq:g_moment_map_single_orbit}.
\end{coro}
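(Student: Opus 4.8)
The plan is to deduce the statement from the Darboux charts of Theorem~\ref{thm:darboux_coordinates}, combined with a direct analysis of a cotangent-bundle moment map---in the spirit of the proof of Proposition~\ref{prop:flatness_high_order}. First I would reduce to a convenient normal form: by Lemma-Definition~\ref{lem:semisimple_reduction} and the reductions of \S~\ref{sec:flatness_general} one may take $\mc A'$ to be $\mf t$-valued as in~\eqref{eq:normal_form_reindexing}, with $\mf l_s = \mf t = \Lie(T)$; and since $\mc O'$ and $\mf t^\dual \simeq \mf t$ are nonsingular irreducible, `miracle flatness' (cf.~\S~\ref{sec:interlude}) reduces the claim to showing that $\pi_s \circ \mu'$ is surjective with equidimensional fibres. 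The structural remark underpinning everything is that $\pi_s \circ \mu'$ is \emph{precisely} a moment map for the residual Hamiltonian $T$-action on $\mc O'$: indeed $\mu' = \Res$ is the moment map for the `constant' $\Ad^\dual_G$-action---composing the orbit embedding $\mc O' \hra \mf g_s^\dual$ with the projection $\mf g_s^\dual \thra \mf g^\dual$ of \S~\ref{sec:birkhoff}---while $\pi_s$ is the transpose of $\mf t = \mf l_s \hra \mf g$.

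Next I would invoke Theorem~\ref{thm:darboux_coordinates} for a finite affine open cover $\mc O' = \bigcup_i V'_i$ by $T$-invariant symplectic subvarieties, with $T$-equivariant symplectomorphisms $\phi_i \cl V'_i \lxra{\simeq} \on T^*\wt N^+$, where $T$ acts on $\wt N^+ = \wt N^+_1 \ts \dm \ts \wt N^+_s$ by conjugation (cf.~Proposition~\ref{prop:recursion_for_darboux}). Since $T$ is abelian, a symplectomorphism of Hamiltonian $T$-spaces matches moment maps up to an additive constant, so under $\phi_i$ the restriction of $\pi_s \circ \mu'$ to $V'_i$ becomes $\mu_T + c_i$, where $\mu_T \cl \on T^*\wt N^+ \to \mf t^\dual$ is the canonical cotangent-lift moment map; trivializing $\on T^*\wt N^+ \simeq \wt N^+ \ts (\wt{\mf n}^+)^\dual$ by left translations,
\begin{equation}
	\Braket{ \mu_T(u,\eta),H } = \Braket{ \eta,\Ad_{u^{-1}}(H) - H },
	\qquad H \in \mf t,
\end{equation}
noting that $\Ad_{u^{-1}}(H) - H \in \wt{\mf n}^+$. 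As flatness is local on the source and each $\mu_T + c_i$ has the same image and fibre-dimension profile as $\mu_T$, it remains only to prove that $\mu_T$ is faithfully flat.

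For surjectivity of $\mu_T$ I would argue directly: the hypothesis $\mf l_s = \mf t$ forces $\phi_s = \vn$, so $\bops_i \mf n^+_i$ is the full nilradical of $\mf b^+$ and every positive root line $\mf g_\alpha$ lies (in $\varpi$-degree $0$) inside $\wt{\mf n}^+$. Given $\tau \in \mf t^\dual$, write $\tau = \sum_{\alpha \in \Delta} c_\alpha\alpha$ over the simple roots $\Delta$ (a $\mb C$-basis of $\mf t^\dual$, $G$ being semisimple), fix root vectors $e_\alpha \in \mf g_\alpha$, and set $n \ceqq \sum_{\alpha \in \Delta} e_\alpha$ and $u \ceqq \exp n \in \wt N^+$, together with $\eta \in (\wt{\mf n}^+)^\dual$ supported on $\set{e_\alpha}_{\alpha \in \Delta}$, with $\braket{\eta,e_\alpha} = c_\alpha$ and vanishing on every root component of height $\geq 2$ and on every $\varpi$-degree $\geq 1$; nilpotence of $\ad_n$ and degree/height bookkeeping then annihilate all terms of $\Ad_{u^{-1}}(H) - H = e^{-\ad_n}(H) - H$ except the linear one, giving $\mu_T(u,\eta) = \tau$. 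For the fibres, write $d \ceqq \dim_{\mb C}\wt{\mf n}^+$ and $\ell \ceqq \dim_{\mb C}\mf t$. For fixed $u = \exp n$, nilpotence of $\ad_n$ shows $\Ad_{u^{-1}}(H) = H$ iff $[n,H] = 0$, so the common kernel is $\mf z_u \ceqq \bigcap_\alpha \ker\alpha$, intersected over the positive roots $\alpha$ that occur in $n$; projecting $\mu_T^{-1}(\tau)$ to $\wt N^+$, the fibre over $u$ is nonempty exactly when $\mf z_u \sse \ker\tau$, and is then an affine space of dimension $d - \ell + \dim_{\mb C}\mf z_u$. Stratifying $\wt N^+$ by $S_k \ceqq \Set{ u | \dim_{\mb C}\mf z_u \geq k }$---a finite union of coordinate subspaces indexed by subsets $\Psi \sse \Phi^+$ spanning a subspace of $\mf t^\dual$ of dimension $\leq \ell - k$---one checks that any such $\Psi$ must omit at least $k$ simple roots (otherwise $\geq \ell - k + 1$ independent simple roots would lie in that subspace), whence $\dim_{\mb C}S_k \leq d - k$; thus each stratum contributes at most $(d - k) + (d - \ell + k) = 2d - \ell$ to $\dim_{\mb C}\mu_T^{-1}(\tau)$, and, since $\mu_T^{-1}(\tau)$ is cut out by $\ell$ equations in the nonsingular $2d$-fold $\on T^*\wt N^+$, every component of it has dimension exactly $2d - \ell$. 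Faithful flatness of $\mu_T$ follows, and transports through the $\phi_i$ over $\set{V'_i}$ to $\pi_s \circ \mu'$.

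I expect the main obstacle to be the fibre-dimension estimate---the uniform combinatorial bound $\dim_{\mb C}S_k \leq d - k$, and the check that it controls \emph{all} fibres and not merely the generic or the zero one---since that is exactly where the hypothesis $L_s = T$ (equivalently $\phi_s = \vn$, so that the root system is `exhausted' by the $\nu^\pm_i$) does the real work; a subsidiary point needing care is verifying that the Darboux symplectomorphisms of Theorem~\ref{thm:darboux_coordinates} genuinely intertwine $\pi_s \circ \mu'$ with the canonical moment map $\mu_T$ above, up to a constant.
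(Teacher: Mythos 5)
Your proof is correct and follows essentially the same route as the paper's, which also invokes Theorem~\ref{thm:darboux_coordinates} and reduces the claim to showing that the (cotangent-lift) $T$-moment map on $\on T^*\!\wt N^+$ is faithfully flat---the paper computes this moment map in the linearized $\exp$-picture as a quadratic expression and deduces surjectivity from $\mf t = \bops_{\Phi^+}[\mf g_\alpha,\mf g_{-\alpha}]$, which is the same mechanism as your explicit construction of $(u,\eta)$ from simple roots. The paper explicitly omits the equidimensional-fibre count; your stratification by $S_k = \set{u : \dim_{\mb C}\mf z_u \geq k}$, the bound $\dim_{\mb C} S_k \leq d - k$ (via the observation that any $\Psi \sse \Phi^+$ spanning a subspace of dimension $\leq \ell - k$ must omit at least $k$ simple roots, each of multiplicity $d_\alpha \geq 1$ in $\wt{\mf n}^+$), and the resulting squeeze against the Krull bound $\geq 2d - \ell$ is a correct and self-contained way to fill that gap.
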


\begin{proof}
	In view of Thm.~\ref{thm:darboux_coordinates},
	it is enough to show that the $T$-moment map
	\begin{equation}
		\on T^*\!\wt N_1^+ \ts \dm \ts \on T^*\! \wt N_s \lra \mf t^{\dual},
	\end{equation}
	vanishing at the origin,
	is surjective,
	with equidimensional fibres.
	Up to replacing the $i$-th factor with $\on T^*\!\wt{\mf n}^+_i$,
	via the $T$-equivariant isomorphism provided by the exponential map $\wt{\mf n}^+_i \to \wt N^+_i$,
	the argument now follows very closely the proof of Prop.~\ref{prop:flatness_high_order}.
\end{proof}

\begin{rema}
	In particular,
	taking $s = 1$ in Cor.~\ref{cor:composite_moment_2} implies that a standard regular semisimple $\Ad_G^{\dual}$-orbit projects onto $\mf t^{\dual}$ in faithfully flat fashion:
	this trivializes the complexified version of Kostant's convexity theorem~\cite{kostant_1973_on_convexity_the_weyl_group_and_the_iwasawa_decomposition}.
\end{rema}

\begin{coro}
	\label{cor:flatness_nu_2}

	Choose two integers $s,\ul s \geq 1$.
	Let $\mc O' = G_s.\mc A'$ and $\ul{\mc O}' = G_{\ul s}.\ul{\mc A}$ be two orbits through UTS principal parts $\mc A'$ and $\ul{\mc A}'$ (cf.~\eqref{eq:normal_form_reindexing}),
	of pole orders $s$ and $\ul s$---%
	respectively.
	Moreover,
	suppose that $\nu(\mc O') \geq 2$ and that $\nu(\ul{\mc O}') \geq 1$ (cf.~\eqref{eq:moduli_number}).
	Then the corresponding $G$-moment map is \emph{faithfully flat}.
\end{coro}

\begin{proof}
	We prove that it is `miraculously' flat.
	Let $\check{\mc O}' \sse \mf{bir}_s^{\dual}$ be the Birkhoff orbit associated with the irregular part of $\mc A'$ (cf.~\eqref{eq:birkhoff_orbit}),
	with $T$-moment map~\eqref{eq:birkhoff_moment_map}.
	Set also $A'_0 \ceqq \on{Res}(\mc A') \in \mf t$.
	Then~\eqref{eq:decoupling} +~\eqref{eq:orbit_from_extended_orbit} yield an identification $(\bm \mu')^{-1}(X) \simeq Z \slash T$ for all $X \in \mf g \simeq \mf g^{\dual}$,
	where
	\begin{align}
		\label{eq:fibre_count_3}
		Z
		= Z_X
		\ceqq \left\{ \bigl(\!\right. (g,A_0),\dif Q,\wt{\ul{\mc A}}          & \bigr) \in \on T^*\!G \ts \check{\mc O}' \ts \ul{\mc O}'                                                                                               \\
		                                                                      & \left. \Big| \, A_0 + \Res \bigl( \wt{\ul{\mc A}} \bigr) = X, \quad \pi_s \bigl( \Ad^{\dual}_g(A_0) \bigr) - \check\mu'(\dif Q) = A'_0 \right\}        \\
		\simeq \left\{ \bigl( \!\right. g,\dif Q,\wt{\ul{\mc A}} \bigr) \in G & \ts \check{\mc O}' \ts \ul{\mc O}' \left. \Big|  \, \pi_s \bigl( X - \Res \bigl( \wt{\ul{\mc A}} \bigr) \bigr) = \check{\mu}'(\dif Q) + A'_0 \right\}.
	\end{align}
	Thus,
	each fibre of the projection $Z \to G \ts \check{\mc O}'$ is isomorphic to a fibre of the map
	\begin{equation}
		\ul{\mc O}' \lra \mf t^{\dual} \simeq \mf t,
		\qquad \wt{\ul{\mc A}}
		\lmt \pi_s \bigl( \Res(\wt{\ul{\mc A}}) \bigr).
	\end{equation}
	By Cor.~\ref{cor:composite_moment_2},
	the latter is surjective,
	with equidimensional fibres;
	hence the same holds for $Z \to G \ts \check{\mc O}'$,
	so that $Z \neq \vn$ has constant dimension as $X$ varies.
	The conclusion follows from the fact that the $T$-action on $\on T^*\! G \ts \check{\mc O}' \ts \ul{\mc O}'$ is \emph{free}.
\end{proof}

\subsection{Three maximal tori at three marked points}
\label{sec:flatness_nu_1}

To treat the last case,
we construct an \emph{unfolding map} (cf.~Thm.~\ref{thm:unfolding_map}),
leading to Cor.~\ref{cor:unfolding_in_action}.

We will first need a different version of the cotangent splitting~\eqref{eq:cotangent_splitting}:

\begin{enonce}{Corollary-Definition}
	\label{cor:cotangent_splitting_2}

	For $i \in \set{1,\dc,s}$ let $\mf u^\pm_i \ceqq \bops_{j = 1}^i \mf n^\pm_j \sse \mf g$ (cf.~\S~\ref{sec:quantum_orbits}.)
	Then there is an $L'$-equivariant symplectomorphism
	\begin{equation}
		\check{\mc O}' \lxra{\simeq} \on T^* \bigl( \mf u^+_1 \ts \dm \ts \mf u^+_{s-2} \bigr)
		\simeq \prod_{i = 1}^{s-2} \bigl( \on T^* \! \mf u^+_i \bigr).
	\end{equation}
\end{enonce}

\begin{proof}
	There is an isomorphisms of $L'$-modules
	\begin{equation}
		\check{\mf n}^\pm_i \simeq (\mf n^\pm_i)^{s-i-1},
		\qquad i \in \set{1,\dc,s-2}. \qedhere
	\end{equation}
\end{proof}

\begin{defi}[cf.~\cite{hiroe_2024_deformation_of_moduli_spaces_of_meromorphic_connections_on_p_1_via_unfolding_of_irregular_singularities}]
	Choose a tuple $\bm\varepsilon = (\varepsilon_0,\dc,\varepsilon_{s-1})$ of \emph{distinct} numbers $\varepsilon_i \in \mb C$.
	Then the $\bm\varepsilon$-\emph{unfolding of} $\mc A'$ is the following 1-form on the $z$-plane:
	\begin{equation}
		\label{eq:unfolding}
		\on{Unf} \bigl( \mc A' \bigr)
		= \on{Unf}_{\bm\varepsilon} \bigl( \mc A' \bigr)
		\ceqq \sum_{i = 0}^{s-1} \frac{ A'_i \dif z}{(z - \varepsilon_0)\dm(z - \varepsilon_i)}.
	\end{equation}
\end{defi}

\subsubsection{}

By construction~\eqref{eq:unfolding} has simple poles at $z = \varepsilon_i$,
for $i \in \set{0,\dc,s-1}$.
Let
\begin{equation}
	\label{eq:unfolded_residue}
	\wh\Lambda_i \ceqq
	\Ress_{z = \varepsilon_i} \bigl( \on{Unf}\bigl( \mc A' \bigr) \bigr)
	= \sum_{j = i}^{s-1} A'_j \cdot \prod_{\substack{l \in \set{0,\dc,j} \\ l \neq i }} (\varepsilon_i - \varepsilon_l)^{-1} \in \mf t.
\end{equation}
It follows that
\begin{equation}
	\label{eq:unfolded_residue_sum}
	\sum_{i = 0}^{s-1} \wh\Lambda_i
	= - \Ress_{z = \infty} \bigl( \on{Unf} \bigl( \mc A' \bigr) \bigr)
	= \Lambda'.
\end{equation}
Moreover,
one can recursively modify the ordered configuration $\bm\varepsilon \in \mb C^s$ so that $G^{\wh\Lambda_i} = L_{s-i} \sse G$ for $i \in \set{0,\dc,s-1}$:
we always tacitly assume that the latter holds.

Then,
using the exponential map $\mf u^\pm_i \to U^\pm_i \ceqq e^{\mf u^\pm_i} \sse G$ (cf.~Cor.-Def.~\ref{cor:cotangent_splitting_2}),
the tame version of Prop.~\ref{prop:recursion_for_darboux} yields $L'$-equivariant symplectic open immersions
\begin{equation}
	\on T^* \! \mf u^+_{s-i}
	\simeq \on T^* \! U^+_{s-i}
	= \on T^* \! U^+_{s-i} \ts \mc O'_{L'} \bigl( \wh\Lambda_i \bigr) \lhra \mc O'_G \bigl( \wh\Lambda_i \bigr),
	\qquad i \in \set{2,\dc,s-1},
\end{equation}
noting that the $\Ad_{L'}^{\dual}$-orbits of $\wh\Lambda_2,\dc,\wh\Lambda_{s-1}$ are points---%
by the choice of $\bm\varepsilon$.
Taking direct products,
by Cor.-Def.~\ref{cor:cotangent_splitting_2} there is then another such immersion
\begin{equation}
	\label{eq:quasi_unfolding_map}
	\check\iota \cl \check{\mc O}' \lhra \prod_{i = 2}^{s-1} \mc O'_G\bigl( \wh\Lambda_i \bigr).
\end{equation}

Now we upgrade~\eqref{eq:quasi_unfolding_map} to a $G$-equivariant immersion of $\mc O'$ into the product of \emph{all} the $\Ad_G^{\dual}$-orbits through the residues~\eqref{eq:unfolded_residue}.
To this end,
note that the following is an $L'$-moment map:
\begin{equation}
	\label{eq:unfolded_moment_map}
	\check\nu \cl \prod_{i = 2}^{s-1} \mc O'_G \bigl( \wh\Lambda_i \bigr) \lra (\mf l')^{\dual} \simeq \mf l',
	\qquad (A_2,\dc,A_{s-1}) \lmt \sum_{i = 2}^{s-1} \pi'(A_i).
\end{equation}
(We omit the proof of this fact.)
Then~\eqref{eq:unfolded_residue_sum} implies that
\begin{equation}
	\label{eq:moment_map_relation}
	\check\nu \circ \check\iota - \check{\mu}'
	= \sum_{i = 2}^{s-1} \wh\Lambda_i
	= \Lambda' - \wh\Lambda_0 - \wh\Lambda_1,
\end{equation}
in the notation of~\eqref{eq:birkhoff_moment_map} +~\eqref{eq:quasi_unfolding_map}--\eqref{eq:unfolded_moment_map}.
We will then construct an $L_s$-invariant map
\begin{equation}
	\on T^*G \ts \check{\mc O}' \simeq \wt{\mc O}' \supseteq (\wt\mu'_2)^{-1} (-\Lambda') \lxra{\wt\Psi} \mc O'_G \bigl( \wh{\bm\Lambda} \bigr) \ceqq \prod_{i = 0}^{s-1} \mc O'_G \bigl( \wh\Lambda_i \bigr),
\end{equation}
in the notation of~\eqref{eq:decoupling} +~\eqref{eq:second_component_moment_map}:
by~\eqref{eq:orbit_from_extended_orbit},
this will induce the desired $G$-equivariant map
\begin{equation}
	\label{eq:unfolding_map}
	\Psi \cl \mc O' \simeq \wt{\mc O}' \, \bs\!\!\bs\!\!\bs_{\!(- \Lambda')} \, L' \lra \mc O'_G \bigl( \wh{\bm\Lambda} \bigr).
\end{equation}

Choose thus a triple $\bigl( (g,\Lambda),\dif Q \bigr) \in (G \ts \mf g) \ts \check{\mc O}' \simeq \on T^*G \ts \check{\mc O}'$ such that
\begin{equation}
	\label{eq:moment_map_value}
	\wt\mu'_2 \bigl( (g,\Lambda),\dif Q \bigr)
	= \check\mu'(\dif Q) - \pi' \bigl( \Ad_g^{\dual}(\Lambda) \bigr)
	= -\Lambda'.
\end{equation}
Then $(A_2,\dc,A_{s-1}) \eqqc \check\iota(\dif Q) \in \prod_{i = 2}^{s-1} \mc O'_G \bigl( \wh\Lambda_i \bigr)$.
For the `lowest' factors,
instead,
consider
\begin{equation}
	\label{eq:modified_moment_value}
	S
	\ceqq \Ad_g^{\dual}(\Lambda) - \wh\Lambda_0 - \wh\Lambda_1 - \sum_{i = 2}^{s-1} A_i \in \mf g^{\dual} \simeq \mf g.
\end{equation}
Using~\eqref{eq:moment_map_relation} +~\eqref{eq:moment_map_value},
compute
\begin{align}
	\pi'(S)
	 & = \pi' \bigl( \Ad_g^{\dual}(\Lambda) \bigr) - \wh\Lambda_0 - \wh\Lambda_1 - \sum_{i = 2}^{s-1} \pi'(A_i)
	= \pi' \bigl( \Ad_g^{\dual}(\Lambda) \bigr) - \wh\Lambda_0 - \wh\Lambda_1 - \check\nu \bigl( \check\iota(\dif Q) \bigr) \\
	 & = \pi' \bigl( \Ad_g^{\dual}(\Lambda) \bigr) - \check\mu'(\dif Q) - \Lambda'
	= 0.
\end{align}
Therefore,
one can decompose~\eqref{eq:modified_moment_value} as $S = S^+ + S^-$,
with $S^\pm \in \mf u^\pm_{s-1}$.
If we now let
\begin{equation}
	\label{eq:modified_components}
	A_0
	\ceqq \wh\Lambda_0 + S^+,
	\qquad A_1 \ceqq \wh\Lambda_1 + S^-,
\end{equation}
then we finally get an $L_s$-invariant map on the level-set of $\wt\mu'_2$ via
\begin{equation}
	\label{eq:lifted_unfolding_map}
	\wt\Psi \cl \bigl( (g,\Lambda),\dif Q \bigr) \lmt \bigl( \Ad_{g^{-1}}^{\dual}(A_0),\Ad_{g^{-1}}^{\dual}(A_1),\dc,\Ad_{g^{-1}}^{\dual}(A_{s-1}) \bigr) \in \mc O'_G \bigl( \wh{\bm\Lambda} \bigr).
\end{equation}

\begin{theo}
	\label{thm:unfolding_map}

	The $G$-equivariant map~\eqref{eq:unfolding_map} is a symplectic open immersion.
\end{theo}

\begin{proof}
	Let $\bm\omega_G$ be the symplectic form of $\mc O'_G \bigl( \wh{\bm\Lambda} \bigr)$.
	Given $\dif Q \in \check{\mc O}'$,
	choose elements $u^\pm \in U^\pm_{s-1}$ such that $\Ad_{(u^+)}^{\dual}(A_i) = \wh\Lambda_i$ for $i \in \set{0,1}$ (cf.~\eqref{eq:modified_components});
	moreover,
	choose elements $b_i \in U^-_{s-i} \cdot U^+_{s-i}$ such that $\Ad_{b_i}^{\dual}(A_i) = \wh\Lambda_i$,
	for $i \in \set{2,\dc,s-1}$.
	Then $\Psi^*\bm\omega_G = \dif \theta$,
	where
	\begin{align}
		\theta
		 & \ceqq \bigl( \dif \, (u^+ g) \cdot (u^+g)^{-1} \bigm| \wh\Lambda_0 \bigr) + \bigl( \dif \, (u^-g) \cdot (u^-g)^{-1} \bigm| \wh \Lambda_1 \bigr) + \sum_{i = 2}^{s-1} \bigl( \dif \, (b_i g) \cdot (b_i g)^{-1} \bigm| \wh\Lambda_i \bigr) \\
		 & = \sum_{i = 0}^{s-1} \bigl( \dif g \cdot g^{-1} \mid A_i \bigr) + \sum_{i = 2}^{s-1} \bigl( \dif b_i \cdot b_i^{-1} \bigm| \wh\Lambda_i \bigr),
	\end{align}
	using the $\Ad_G$-invariant pairing $( \cdot \mid \cdot) \cl \mf g \ots \mf g \to \mb C$,
	and noting that
	\begin{equation}
		\bigl( \dif u^+ \cdot (u^+)^{-1} \bigm| \wh\Lambda_0 \bigr)
		= 0
		= \bigl( \dif u^- \cdot (u^-)^{-1} \bigm| \wh\Lambda_1 \bigr).
	\end{equation}
	Now observe that $\theta' \ceqq \sum_{i = 2}^{s-1} \bigl( \dif b_i \cdot b_i^{-1} \bigm| \wh\Lambda_i \bigr)$ is the pullback of the symplectic form along the embedding~\eqref{eq:quasi_unfolding_map};
	hence,
	it coincides with the symplectic form $\check\omega$ on the Birkhoff orbit.
	Overall
	\begin{equation}
		\label{eq:pullback_along_unfolding}
		\Psi^*\bm\omega_G
		= \theta' + \sum_{i = 0}^{s-1} \dif \, \bigl( \dif g \cdot g^{-1} \mid A_i \bigr)
		= \check\omega + \sum_{i = 0}^{s-1} \dif \, \bigl( \dif g \cdot g^{-1} \mid A_i \bigr).
	\end{equation}
	On the other hand,
	by construction
	\begin{equation}
		A_0 + A_1
		= \wh\Lambda_0 + \wh\Lambda_1 + S
		= \Ad_g^{\dual}(\Lambda) - \sum_{i = 2}^{s-1} A_i,
	\end{equation}
	whence $\sum_{i = 0}^{s-1} A_i = \Ad_g^{\dual}(\Lambda)$,
	and in turn
	\begin{equation}
		\sum_{i = 0}^{s-1} \dif \, \bigl( \dif g \cdot g^{-1} \mid A_i \bigr)
		= \dif \, \bigl( \dif g \cdot g^{-1} \mid \Lambda \bigr),
	\end{equation}
	which is the symplectic form of $\on T^*\!G$.
	In view of~\eqref{eq:decoupling},
	the equality~\eqref{eq:pullback_along_unfolding} proves that~\eqref{eq:unfolding_map} intertwines the symplectic structures.

	We conclude by showing that the fibres of the map~\eqref{eq:lifted_unfolding_map} are precisely the $L_s$-orbits through $(\wt\mu'_2)^{-1} \bigl( -\Lambda' \bigr) \sse \wt{\mc O}'$,
	so that $\Psi$ is injective;
	the fact that it is an open immersion follows,
	e.g.,
	from~\cite[Thm.~4.2, p.~187]{parshin_shafarevich_1994_algebraic_geometry_iv}.
	Suppose thus that
	\begin{equation}
		\wt\Psi \bigl( (g_1,\Lambda_1),\dif Q_1 \bigr) = \wt\Psi \bigl( (g_2,\Lambda_2),\dif Q_2 \bigr) \in \mc O'_G \bigl( \wh{\bm\Lambda} \bigr),
	\end{equation}
	for suitable elements $g_,g_2 \in G$,
	$\Lambda_1,\Lambda_2 \in \mf g$,
	and $\dif Q_1,\dif Q_2 \in \check{\mc O}'$.
	Comparing the components in $\mc O'_G \bigl( \wh\Lambda_0 \bigr) \ts \mc O'_G \bigl( \wh\Lambda_1 \bigr)$ shows that the group element $h \ceqq g_2 \cdot g_1^{-1}$ lies in $L' \cdot U^+_{s-1} \cap L' \cdot U^-_{s-1} = L'$,
	and that $\Ad_h(\wh\Lambda_0) = \wh\Lambda_0$.
	Thus $h \in (L')^{\Lambda'} = L_s$.
\end{proof}

\begin{coro}
	\label{cor:open_image_unfolding}

	Suppose that $A'_{s-1} \in \mf t_{\reg}$.
	Then the image of the unfolding map is
	\begin{align}
		\Psi(\mc O')
		= \left\{ \bigl( \Ad_{g_0^{-1}}^{\dual}\bigl( \wh\Lambda_0 \bigr),\right.\dc,\Ad_{g_{s-1}^{-1}}^{\dual} & \bigl( \wh\Lambda_{s-1} \bigr) \bigr) \in \mc O'_G \bigl( \wh{\bm\Lambda} \bigr)                 \\
		                                                                                                        & \Big| \left. g_ig_0^{-1} \in T \cdot U^- \cdot U^+  \text{ for } i \in \set{1,\dc,s-1} \right\}.
	\end{align}
\end{coro}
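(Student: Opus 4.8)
The plan is to make the unfolding map completely explicit in the generic case and then match its image against the asserted set by a double inclusion. First I would record the simplifications forced by genericity: when $A'_{s-1} \in \mf t_{\reg}$ one has $L_1 = \dm = L_s = L' = T$, so the entire Levi/parabolic apparatus collapses — $\mf u^\pm_i = \mf n^\pm$ for every $i$ (the nilradicals of the fixed opposite Borels $\mf b^\pm$, with $N^\pm_i = U^\pm$), each orbit $\mc O'_G(\wh\Lambda_i)$ is a regular semisimple adjoint orbit $\simeq G/T$ (the residues $\wh\Lambda_i \in \mf t$ being regular by the tacitly assumed choice of $\bm\varepsilon$), and the Birkhoff‑orbit splitting of Cor.-Def.~\ref{cor:cotangent_splitting_2} reads $\check{\mc O}' \simeq \prod_{i=1}^{s-2} \on T^*U^+$ through the exponential. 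Since $\Psi$ is already known to be an open immersion by Thm.~\ref{thm:unfolding_map}, it suffices to identify the underlying \emph{set} $\Psi(\mc O')$.

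Next I would assemble the elementary Lie theory. For $\lambda \in \mf t_{\reg}$ the coadjoint $U^\pm$-orbit through $\lambda$ is the affine subspace $\lambda + \mf n^\pm$, on which $U^\pm$ acts simply transitively (the orbit map is étale at the identity by regularity, then bijective by dimension count); the big‑cell decomposition $B^-B^+ = T \cdot U^- \cdot U^+$ has unique factorizations and is invariant under left and right multiplication by $T$, so the condition ``$g_ig_0^{-1} \in T U^-U^+$'' does not depend on the choice of representatives (each $g_i$ being unique up to left multiplication by $T$). Finally, the tame/logarithmic version of Prop.~\ref{prop:recursion_for_darboux} used to build $\check\iota$ in \eqref{eq:quasi_unfolding_map}, specialized via \eqref{eq:recursive_embedding_image}, identifies each map $\on T^*U^+ \lhra \mc O'_G(\lambda)$ with a symplectic isomorphism onto the \emph{big cell} $\set{ \Ad^{\dual}_{b^{-1}}(\lambda) \mid b \in U^-U^+ }$ of $\mc O'_G(\lambda)$; hence $\check\iota$ becomes a symplectic isomorphism from $\check{\mc O}'$ onto $\prod_{i=2}^{s-1}\set{ \Ad^{\dual}_{b^{-1}}(\wh\Lambda_i) \mid b \in U^-U^+ }$.

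For the inclusion $\Psi(\mc O') \sse$ (the claimed set) I would unwind the lifted map \eqref{eq:lifted_unfolding_map}. Given $\bigl( (g,\Lambda),\dif Q \bigr) \in (\wt\mu'_2)^{-1}(-\Lambda')$, the facts above produce unique $u^\pm \in U^\pm$ with $\Ad^{\dual}_{u^+}(A_0) = \wh\Lambda_0$ and $\Ad^{\dual}_{u^-}(A_1) = \wh\Lambda_1$ (here $A_0 = \wh\Lambda_0 + S^+$, $A_1 = \wh\Lambda_1 + S^-$, $S^\pm \in \mf n^\pm$), and unique $b_i \in U^-U^+$ with $\Ad^{\dual}_{b_i}(A_i) = \wh\Lambda_i$ for $i \in \set{2,\dc,s-1}$, where $(A_2,\dc,A_{s-1}) = \check\iota(\dif Q)$. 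Then \eqref{eq:lifted_unfolding_map} reads $\wt\Psi\bigl( (g,\Lambda),\dif Q \bigr) = \bigl( \Ad^{\dual}_{g_0^{-1}}(\wh\Lambda_0),\dc,\Ad^{\dual}_{g_{s-1}^{-1}}(\wh\Lambda_{s-1}) \bigr)$ with $g_0 = u^+g$, $g_1 = u^-g$ and $g_i = b_ig$ ($i \geq 2$), so $g_1g_0^{-1} = u^-(u^+)^{-1}$ and $g_ig_0^{-1} = b_i(u^+)^{-1}$ all lie in $U^-U^+ \sse T U^-U^+$. Passing to the induced $\Psi$ on $\mc O'$ yields the inclusion.

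For the reverse inclusion — equivalently, surjectivity of $\Psi$ onto the claimed set — I would run the construction backwards. Starting from a tuple in the claimed set, I first use the left‑$T$ ambiguity of the representatives together with the bi‑$T$-invariance of $B^-B^+$ to re‑choose them so that $g_ig_0^{-1} \in U^-U^+$ for all $i$, and then act by the diagonal $G$ (present on both sides, since $\Psi$ is equivariant for the diagonal action and the condition only involves $g_ig_0^{-1}$) to normalise $g_0 = e$; the tuple is now $\bigl( \wh\Lambda_0, \Ad^{\dual}_{g_1^{-1}}(\wh\Lambda_1),\dc \bigr)$ with all $g_i \in U^-U^+$. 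Writing $g_1 = w^-w^+$ in the big cell and setting $u^- \ceqq w^-$, $u^+ \ceqq (w^+)^{-1}$, $b_i \ceqq g_iu^+ \in U^-U^+$, $\dif Q \ceqq \check\iota^{-1}\bigl( \Ad^{\dual}_{b_2^{-1}}(\wh\Lambda_2),\dc,\Ad^{\dual}_{b_{s-1}^{-1}}(\wh\Lambda_{s-1}) \bigr)$, $g \ceqq (u^+)^{-1}$, and defining $S^\pm \in \mf n^\pm$ and then $\Lambda$ by $\Ad^{\dual}_g(\Lambda) = \wh\Lambda_0 + \wh\Lambda_1 + \sum_{i\geq 2} A_i + S$, I would check — this is exactly where the residue identities \eqref{eq:unfolded_residue_sum} and \eqref{eq:moment_map_relation} are used — that $\wt\mu'_2\bigl( (g,\Lambda),\dif Q \bigr) = -\Lambda'$, so the triple lies in the domain of $\wt\Psi$, and that $\wt\Psi$ sends it to the prescribed tuple. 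The main obstacle is precisely this last step: arranging the representatives so that every relative position $g_ig_0^{-1}$ genuinely lands in $U^-U^+$ (which is what lets the inverse construction close up) and then verifying that the reconstructed triple satisfies the moment‑map constraint cutting out the domain of the unfolding map; the rest is bookkeeping with the big‑cell decomposition.
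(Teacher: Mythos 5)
Your proof is correct, and it is the natural one: specialize the Levi/parabolic apparatus to $L_i = T$, $\mf n^\pm_i = \mf u^\pm$, identify each factor of $\check\iota$ with the big‑cell chart $\set{\Ad^{\dual}_{b^{-1}}(\wh\Lambda_i) : b \in U^-U^+}$ of the regular orbit (the $s=1$ case of Prop.~\ref{prop:recursion_for_darboux}, exactly as in Lem.~\ref{lem:confluence} and \eqref{eq:weyl_cell_covering_2}), read off $g_0 = u^+g$, $g_1 = u^-g$, $g_i = b_ig$ from the lifted map \eqref{eq:lifted_unfolding_map} for one inclusion, and run the big‑cell factorizations backwards — using the identity $\sum_i A_i = \Ad^{\dual}_g(\Lambda)$ and the residue relation \eqref{eq:moment_map_relation} to verify the constraint $\wt\mu'_2 = -\Lambda'$ — for the other. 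The paper omits its own proof, so there is nothing to compare against; your observations that the representatives $g_i$ are determined up to \emph{left} $T$-multiplication, that $T\cdot U^-\cdot U^+$ is bi‑$T$-invariant (so the condition on the tuple is well posed), and that the diagonal $G$-action preserves $g_ig_0^{-1}$ (so one may normalize $g_0 = e$) are precisely the points that need to be on record, and you have them. The only loose ends are routine: the uniqueness of $u^\pm$ and $b_i$ (which follows from $T \cap U^-U^+ = \set{e}$ and the unique factorization in $B^-U^+$), and that each $U^\pm$-coadjoint orbit map onto $\wh\Lambda_i + \mf u^\pm$ is bijective (which follows from regularity of $\wh\Lambda_i$ and the standard recursive inversion of $e^{\ad_Y}$); neither affects the structure of the argument.
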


\begin{proof}[Proof omitted]
\end{proof}

\begin{coro}
	\label{cor:unfolding_in_action}

	Choose an integer $s \geq 1$,
	and let $\mc O'_{G,0},\dc,\mc O'_{G,s-1} \sse \mf g^{\dual}$ be \emph{regular} semisimple $\Ad_G^{\dual}$-orbits.
	Then there exist:
	\begin{enumerate}
		\item
		      a finite open cover $\prod_{j = 0}^{s-1} \mc O'_{G,j} = \bigcup_{i \in I} V'_{G,i}$,
		      by $G$-invariant symplectic subvarieties;

		\item
		      and $G$-equivariant symplectomorphisms
		      \begin{equation}
			      V'_{G,i} \lxra{\simeq} \mc O'_i \ceqq G_s.\mc A'_i \sse \mf g_s^{\dual},
			      \qquad i \in I,
		      \end{equation}
		      for suitable UTS principal parts $\mc A'_i \in \mf g'_s$.
	\end{enumerate}
\end{coro}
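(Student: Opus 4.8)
The plan is to realise each $\mc O'_{G,j}$ as an unfolded orbit and then invoke Thm.~\ref{thm:unfolding_map}, reading the open images off Cor.~\ref{cor:open_image_unfolding}, exactly as in the $s = 2$ case of Lem.~\ref{lem:confluence}. First I would fix a maximal torus $T$ (with opposite Borels $B^\pm$, unipotent radicals $U^\pm$, Weyl group $W$), and---since each $\mc O'_{G,j}$ is regular semisimple, hence meets $\mf t^{\dual}$ in a single $W$-orbit---a marking $\Lambda_j \in \mc O'_{G,j} \cap \mf t^{\dual}$; I would also fix a tuple $\bm\varepsilon = (\varepsilon_0,\dc,\varepsilon_{s-1}) \in \mb C^s$ with pairwise-distinct entries. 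The cover will be indexed by $\bm w = (w_1,\dc,w_{s-1}) \in W^{s-1}$: put $\mu_0 \ceqq \Lambda_0$ and $\mu_j \ceqq w_j^{-1}(\Lambda_j)$, and let $\mc A'_{\bm w} = \sum_j A'_j \varpi^{-j-1}\dif\varpi$ be the $\mf t$-valued principal part whose $\bm\varepsilon$-unfolding has residues $\wh\Lambda_j = \mu_j$. Solving~\eqref{eq:unfolded_residue} for the $A'_j$ is immediate: the coefficient of $A'_j$ in $\wh\Lambda_j$ is the nonzero scalar $\prod_{l < j}(\varepsilon_j - \varepsilon_l)^{-1}$, so downward recursion on $j$ produces all $A'_j \in \mf t$, whence $\mc A'_{\bm w} \in \mf g'_s$ is UTS; moreover $A'_{s-1}$ is a nonzero multiple of the regular element $\mu_{s-1}$, so $L_1 = \dm = L_s = T$ and $G^{\wh\Lambda_j} = L_{s-j}$ for all $j$---so Thm.~\ref{thm:unfolding_map} and Cor.~\ref{cor:open_image_unfolding} apply with no further adjustment of $\bm\varepsilon$.

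Setting $\mc O'_{\bm w} \ceqq G_s.\mc A'_{\bm w}$, Thm.~\ref{thm:unfolding_map} then hands us a $G$-equivariant symplectic open immersion $\Psi_{\bm w} \cl \mc O'_{\bm w} \lhra \prod_j \mc O'_G(\wh\Lambda_j) = \prod_j \mc O'_{G,j}$ (the last equality because $\mc O'_G(\mu_j) = \mc O'_{G,j}$), and I would take $V'_{G,\bm w} \ceqq \Psi_{\bm w}(\mc O'_{\bm w})$ together with the symplectomorphism $\Psi_{\bm w}^{-1} \cl V'_{G,\bm w} \lxra{\simeq} \mc O'_{\bm w}$. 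Parts~(1)--(2) of the statement then hold by construction, and the only thing left to verify is that the finitely many $V'_{G,\bm w}$ cover $\prod_j \mc O'_{G,j}$. For this I would use the elementary Bruhat-type identity $\bigcup_{w \in W} \dot w\,(B^-B^+) = G$: starting from $G = \bigsqcup_w B^-\dot w B^+$, one has $B^-\dot w B^+ = \dot w\,(\dot w^{-1}B^-\dot w)B^+ \sse \dot w\,B^-B^+$, because the $T$-stable unipotent group $\dot w^{-1}U^-\dot w$ factors as $(\dot w^{-1}U^-\dot w \cap U^-)(\dot w^{-1}U^-\dot w \cap U^+) \sse U^-U^+ \sse B^-B^+$.

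Finally, given a point $(A_0,\dc,A_{s-1}) \in \prod_j \mc O'_{G,j}$, I would use $G$-equivariance (each $V'_{G,\bm w}$ is $G$-stable) to assume $A_0 = \Lambda_0 \in \mf t^{\dual}$. For $j \geq 1$, the regular semisimple $A_j$ has centraliser $\Ad_{k_j}(\mf t)$ for some $k_j \in G$, so $\Set{ g | \Ad^{\dual}_g(A_j) \in \mf t^{\dual} } = N_G(T)k_j^{-1}$; the identity above shows this coset meets $T\cdot U^-\cdot U^+$, so there is $g_j \in T U^- U^+$ with $\mu_j \ceqq \Ad^{\dual}_{g_j}(A_j) \in \mf t^{\dual}$, say $\mu_j = w_j^{-1}(\Lambda_j)$. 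Taking $g_0 \ceqq e$, one gets $g_j g_0^{-1} \in T U^- U^+$ and $A_j = \Ad^{\dual}_{g_j^{-1}}(\mu_j)$ for all $j$, so Cor.~\ref{cor:open_image_unfolding} places the point in $V'_{G,\bm w}$ with $\bm w = (w_1,\dc,w_{s-1})$. I expect the genuine work to be concentrated in this last bookkeeping step---lining up the $G$-reduction, the Bruhat identity, and the explicit description of $V'_{G,\bm w}$---rather than in any symplectic geometry, which is entirely supplied by Thm.~\ref{thm:unfolding_map}; and the argument reduces to the proof of Lem.~\ref{lem:confluence} when $s = 2$.
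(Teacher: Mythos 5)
Your proof is correct and follows essentially the same route as the paper's: choose markings in $\mf t^{\dual}$, define the candidate principal parts $\mc A'_{\bm w}$ by prescribing their unfolded residues as Weyl-translates of the markings, observe that the leading coefficient is automatically regular semisimple (so $L_1 = \dm = L_s = T$ and the "adjustment of $\bm\varepsilon$" tacitly assumed in the paper is vacuous here), feed this into Thm.~\ref{thm:unfolding_map} together with the explicit description of the image from Cor.~\ref{cor:open_image_unfolding}, and then cover using the open-cell decomposition $G = \bigcup_W \dot w\,(T\cdot U^-\cdot U^+)$. The only genuine departure is a small economy: you index by $W^{s-1}$ rather than $W^s$, by using the $G$-stability of each $V'_{G,\bm w}$ to first normalize $A_0 = \Lambda_0$, and your final two paragraphs fill in the covering computation and the Bruhat-type inclusion $B^-\dot w B^+ \sse \dot w\,B^-B^+$ that the paper leaves implicit (citing~\eqref{eq:weyl_cell_covering_group}). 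Both indexings are equivalent in effect; yours trims a layer of redundancy, while the paper's $W^s$ is marginally simpler to state since it does not need the preliminary $G$-reduction.
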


\begin{proof}
	Choose markings $\Lambda'_j \in \mc O'_{G,j} \cap \mf t^{\dual}$,
	for $j \in \set{0,\dc,s-1}$,
	as well as distinct numbers $\varepsilon_0,\dc,\varepsilon_{s-1} \in \mb C$.
	Then,
	given an $s$-tuple $\bm w = (w_0,\dc,w_{s-1}) \in W^s$,
	define a UTS principal part
	\begin{equation}
		\mc A'_{\bm w}
		= \sum_{j = 0}^{s-1} A_{\bm w,j} \varpi^{-j-1} \dif \varpi,
		\qquad A_{\bm w,j} \in \mf t,
	\end{equation}
	by the equality $\wh\Lambda_j = w_j(\Lambda'_j) \in \mf t$,
	for $j \in \set{0,\dc,s-1}$,
	where $\wh\Lambda_j \in \mf t$ is the residue associated to $\on{Unf} \bigl( \mc A'_{\bm w} \bigr)$ (cf.~\eqref{eq:unfolded_residue}).
	It follows that the leading coefficient $A'_{\bm w,s-1}$ is \emph{regular} semisimple.
	By Cor.~\ref{cor:open_image_unfolding},
	the image $V_{\bm w} \ceqq \Psi \bigl( \mc O'_{\bm w} \bigr)$ of the corresponding unfolding map
	\begin{equation}
		\Psi \cl \mc O'_{\bm w} \lhra \prod_{j = 0}^{s-1} \mc O'_{G,j},
		\qquad \mc O'_{\bm w}
		\ceqq G_s.\mc A'_{\bm w},
	\end{equation}
	consists of tuples of the form
	\begin{equation}
		\bigl( \Ad_{g_0^{-1}}^{\dual} \bigl( w_0(\Lambda'_0) \bigr),\dc,\Ad_{g_{s-1}^{-1}}^{\dual} \bigl( w_{s-1}(\Lambda'_{s-1}) \bigr) \bigr),
		\qquad g_1g_0^{-1},\dc,g_{s-1}g_0^{-1} \in T \cdot U^- \cdot U^+.
	\end{equation}
	One concludes by the cover $G = \bigcup_W \dot w (T \cdot U^- \cdot U^+)$,
	where $\dot w \in N_G(T)$ is a lift of $w \in W$.
\end{proof}

\begin{coro}
	\label{cor:flatness_nu_1}

	Choose integers $s_1,s_2,s_3 \geq 1$.
	For $i \in \set{1,2,3}$,
	let $\mc O'_i \ceqq G_{s_i}.\mc A'_i$ be an orbit through a UTS principal part $\mc A'_i$ (cf.~\eqref{eq:normal_form_reindexing}),
	of pole order $s_i$.
	Moreover,
	suppose that $\nu(\mc O'_i) \geq 1$ (cf.~\eqref{eq:moduli_number}).
	Then the corresponding $G$-moment map is \emph{faithfully flat}.
\end{coro}

\begin{proof}
	If $s_1 = s_2 = s_3 = 1$,
	the conclusion follows from Prop.~\ref{prop:flatness_high_order} +~Cor.~\ref{cor:unfolding_in_action},
	as the orbit product is covered by \emph{generic} UTS orbits of pole order $3$.
	In general,
	given $i \in \set{1,2,3}$ consider the corresponding unfolding map~\eqref{eq:unfolding_map},
	viz.,
	\begin{equation}
		\Psi_i \cl \mc O'_i \lhra \prod_{j = 0}^{s_i-1} \mc O'_G \bigl( \wh\Lambda_j^{(i)} \bigr),
	\end{equation}
	in the notation of~\eqref{eq:unfolded_residue}.
	Then the product map $\bm\Psi \ceqq (\Psi_1,\Psi_2,\Psi_3)$ is a $G$-equivariant symplectic open immersion,
	and it intertwines the corresponding $G$-moment maps by~\eqref{eq:unfolded_residue_sum}.
	Flatness follows from the tame case,
	together with Lem.~\ref{lem:base_change},
	as $\wh\Lambda^{(i)}_0 \in \mf t$ is \emph{regular} semisimple.

	To show surjectivity choose an element $\Lambda \in \mf g \simeq \mf g^{\dual}$.
	Consider the canonical projection $\pi_{\mf t} \cl \mf g \thra \mf t$ along $\mf u^- \ops \mf u^+ \sse \mf g$.
	In view of Cor.~\ref{cor:composite_moment_2},
	there exists a principal part $\mc A_3 \in \mc O'_3$ such that
	\begin{equation}
		\pi_{\mf t} (\mc A_3)
		= \pi_{\mf t}(\Lambda) - A^{(1)}_0 - A^{(2)}_0 \in \mf t,
	\end{equation}
	writing
	\begin{equation}
		\mc A'_i
		= \sum_{j = 0}^{s_i-1} A_j^{(i)} \varpi^{-j-1} \dif \varpi,
		\qquad A_j^{(i)} \in \mf t,
		\quad i \in \set{1,2,3}.
	\end{equation}
	We now construct two principal parts $\mc A_1 \in \mc O'_1$ and $\mc A_2 \in \mc O'_2$ such that $\bm\mu'(\mc A_1,\mc A_2,\mc A_3) = \Lambda$.
	Let us start from $i = 2$.
	Denote by $T = L_{s_2}^{(2)} \sse \dm \sse \dm L_1^{(2)} \sse G$ the fission sequence determined by $\mc A'_2$,
	and introduce the usual Lie subalgebras $\mf n^\pm_j = (\mf n^\pm_j)^{(2)} \sse \mf l_{j-1}^{(2)}$.
	It follows that $\bops_{j = 1}^{s_2} \mf n^\pm_j = \mf u^\pm \sse \mf b^\pm$ (cf.~Cor.-Def.~\ref{cor:cotangent_splitting_2}).
	Let then $\pi_{\mf u^\pm} \cl \mf g \thra \mf u^\pm$ be the canonical projections along $\mf t \ops \mf u^\mp \sse \mf g$,
	and set in addition $Y^{(2)} \ceqq \pi_{\mf u^+} \bigl( \Lambda - \Res(\mc A_3) \bigr)$.
	Decompose the latter as
	\begin{equation}
		Y^{(2)} = Y^{(2)}_1 + \dm + Y^{(2)}_{s_2},
		\qquad Y^{(2)}_j \in \mf n^+_j,
	\end{equation}
	and choose $X_j \in \mf n^+_j$ such that $\bigl[ A_{s-j}^{(2)},X_j \bigr] = Y^{(2)}_j$.
	Since $A_0^{(2)} \in \mf l^{(2)}_{s_2-1}$ is regular semisimple,
	there exists $u^+ \in \exp(\mf n^+_{s_2})$ such that $\Ad_{u^+} \bigl( A_0^{(2)} + Y^{(2)}_{s_2} \bigr) = A_0^{(2)}$.
	Then consider the element
	\begin{equation}
		\bm g
		\ceqq u^+ \cdot e^{X_{s_2-1} \varpi_{a_2}} \dm e^{X_1 \varpi^{s_2-1}_{a_2}} \in G_{s_2},
	\end{equation}
	and finally let $\mc A_2 \ceqq \Ad_{\bm g^{-1}}^{\dual} \bigl( \mc A'_2 \bigr) \in \mc O'_2$.
	By construction $X_j \in \mf l^{(2)}_{j-1}$ commutes with $A_{s_2-j+1}^{(2)},\dc,A_{s_2-1}^{(2)}$,
	and $\Ad_{u^+}$ stabilizes $A_1^{(2)},\dc,A_{s_2-1}^{(2)}$;
	thus:
	\begin{equation}
		\mc A_2
		= \mc A'_2 + Y^{(2)}_{s_2} - \sum_{j = 1}^{s_2-1} \bigl[ X_j,A_{s_2-j}^{(2)} \bigr] \frac{\dif \varpi}\varpi
		= \mc A'_2 + Y^{(2)} \frac{\dif \varpi}\varpi.
	\end{equation}
	Analogously,
	one can construct an element $\mc A_1 \in \mc O'_1$ of the form
	\begin{equation}
		\mc A_1
		= \mc A'_1 + Y^{(1)} \frac{\dif \varpi}\varpi,
		\qquad Y^{(1)} \ceqq \pi_{\mf u^-} \bigl( \Lambda - \Res(\mc A_3) \bigr).
	\end{equation}
	Finally,
	compute
	\begin{equation}
		\Res(\mc A_1) + \Res(\mc A_2) + \Res(\mc A_3)
		= A_0^{(1)} + Y^{(1)} + A_0^{(2)} + Y^{(2)} + \Res(\mc A_3)
		= \Lambda. \qedhere
	\end{equation}
\end{proof}

\section{About stability}
\label{sec:stability}

Let again $G$ be \emph{reductive}.
In this section we provide a sufficient condition on the choice of truncated-current orbits $\bm{\mc O}' = \set{\mc O'_a}_{\bm a}$ so that the level-zero set of the moment map~\eqref{eq:moment_map} only contains \emph{stable} points---%
upon modding out the centre.

\subsection{}

Fix a maximal torus $T \sse G$ and a Borel subgroup $B \sse G$ containing $T$.
Choose also UTS principal parts $\mc A'_a$ as in~\eqref{eq:normal_form_reindexing},
with pole orders $s_a \geq 1$ for $a \in \bm a$.
Let $\mc O'_{\bm a} \sse \prod_{\bm a} \mf g_{s_a}^{\dual}$ denote the corresponding orbit product (cf.~\eqref{eq:orbit_product}),
and cut out the affine subvariety $M = (\bm\mu')^{-1}(0) \sse \mc O'_{\bm a}$ (cf.~\eqref{eq:fin_dim_description_de_rham}).
The diagonal $G$-action factors through $\bm P(G) = G \slash Z(G)$,
and the following criterion is a $G$-bundle generalization of~\cite{kostov_1999_on_the_deligne_simpson_problem,kostov_2004_on_the_deligne_simpson_problem_and_its_weak_version}:

\begin{prop}
	\label{prop:stability}

	Choose:
	(i) a proper parabolic subgroup $P \ssne G$ containing $B$,
	with Lie algebra $\mf p \ceqq \Lie(P)$;
	and (ii) an element $w_a$ of the Weyl group $W$,
	for $a \in \bm a$.
	Moreover,
	suppose that for all such choices there exists a character $\chi \cl \mf p \to \mb C$ such that
	\begin{equation}
		\label{eq:stability_condition}
		\sum_{\bm a} \chi \bigl( \wt \Lambda'_a \bigr) \neq 0,
		\qquad \wt\Lambda'_a
		\ceqq w_a (\Lambda'_a) \in \mf t.
	\end{equation}
	Then all the points of $M$ are $\bm P(G)$-\emph{stable}.
\end{prop}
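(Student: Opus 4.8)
The plan is to use the Hilbert--Mumford numerical criterion for GIT stability, applied to the (reductive) group $P(G)$ acting on the affine variety $M \sse \mc O'_{\bm a}$. Concretely, a point $x = \prod_{\bm a} \mc A_a \in M$ fails to be $P(G)$-stable if and only if there is a nontrivial one-parameter subgroup $\lambda \cl \mb C^{\ts} \to P(G)$ such that $\lim_{t \to 0} \lambda(t).x$ exists in $M$ (this handles closedness of the orbit, with a standard argument for finiteness of stabilizers once no such destabilizing $\lambda$ acts trivially in the limit). So first I would assume, for contradiction, that some $x \in M$ is not stable, and extract such a $\lambda$. Up to conjugating by $G$ (which we are free to do, since stability is a $G$-invariant notion) one can arrange that the associated cocharacter of $G$ lies in the closed dominant Weyl chamber, hence that the parabolic $P_\lambda \ceqq \Set{ g \in G | \lim_{t \to 0} \lambda(t) g \lambda(t)^{-1} \text{ exists } }$ is a standard parabolic containing $B$; call it $P$. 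Since $\lambda$ is nontrivial in $P(G)$, the parabolic $P$ is \emph{proper}. This is exactly the data in the hypothesis of the proposition.

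The next step is to compute the limit $\lim_{t \to 0} \lambda(t).x$ inside each factor $\mc O'_a \sse \mf g_{s_a}^{\dual}$ and see what constraint its existence places on the residues. The key observation is that the limit of a coadjoint orbit point under a one-parameter subgroup, when it exists, lands in a face of the orbit determined by $P_\lambda$; in particular the residue $\Res(\mc A_a)$, which lives in $\mf g^{\dual} \simeq \mf g$, must have a well-defined limit under the inverse coadjoint action of $\lambda$, which forces $\Res(\mc A_a)$ to lie in the nilradical-free part, i.e.\ in $\mf p^{\dual}$ pulled back along $\mf g \thra \mf l$, where $\mf l$ is the Levi of $\mathfrak p$; and moreover the \emph{limiting} residue is conjugate (by the $\Ad_{G_{s_a}}$-stabilizer structure, ultimately by an element of $G$) to $\mc A'_a$'s residue $\Lambda'_a$ up to a Weyl element $w_a$. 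Pairing with any character $\chi \cl \mf p \to \mb C$ (which kills the nilradical, so descends to $\mf l$ and is $\lambda$-invariant), one gets $\chi\bigl(\Res(\lambda(t).\mc A_a)\bigr) = \chi\bigl(\Res(\mc A_a)\bigr)$ independent of $t$, and taking $t \to 0$ identifies this common value with $\chi\bigl(w_a(\Lambda'_a)\bigr) = \chi\bigl(\wt\Lambda'_a\bigr)$ for the appropriate $w_a \in W$. Summing over $\bm a$ and using that $x \in M$ means $\sum_{\bm a} \Res(\mc A_a) = 0$, we obtain $\sum_{\bm a} \chi\bigl(\wt\Lambda'_a\bigr) = \chi\bigl(\sum_{\bm a}\Res(\mc A_a)\bigr) = 0$ for \emph{every} character $\chi$ of $\mf p$ --- contradicting the hypothesis~\eqref{eq:stability_condition}. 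Hence no destabilizing $\lambda$ exists, the orbit of $x$ is closed, and (a short additional argument: any $\lambda$ fixing $x$ would in particular have $\lim_{t\to 0}\lambda(t).x = x$, again ruled out) the stabilizer is finite, so $x$ is stable.

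The main obstacle, and the step I expect to require the most care, is the precise identification in the middle paragraph: controlling what $\lim_{t\to 0} \lambda(t).\mc A_a$ does \emph{inside the truncated-current coadjoint orbit} $\mc O'_a$, and in particular tracking how the residue of the limit relates to $\Lambda'_a$ via a Weyl element. For $s_a = 1$ this is the classical statement that limits of semisimple coadjoint orbit points under dominant cocharacters have residue Weyl-conjugate to the marking (the degeneration goes to a smaller orbit on the boundary, but the residue of the \emph{point that is taken to the limit}, read off via $\chi$, is preserved along the flow and equals $\chi(w_a\Lambda'_a)$ in the limit by dominance). For $s_a \geq 2$ one must use the extended/Birkhoff decomposition of \S\ref{sec:extended_orbits} (the symplectomorphism~\eqref{eq:orbit_from_extended_orbit} and the moment-map formula~\eqref{eq:second_component_moment_map}) to isolate the residue $\wt\mu'_1$ and argue that $\lambda$ acts on it through the $\on T^{*}G$-factor, where the dominant-cocharacter limit analysis reduces to the cotangent-bundle case. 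I would phrase this as a lemma: \emph{if $\lim_{t\to 0}\lambda(t).\mc A$ exists in $\mc O'$ with $\lambda$ dominant and $P = P_\lambda$, then $\chi\bigl(\Res(\mc A)\bigr) = \chi\bigl(w(\Lambda')\bigr)$ for some $w \in W$ and every character $\chi$ of $\mf p$}, prove it via~\eqref{eq:decoupling}--\eqref{eq:orbit_from_extended_orbit}, and then the global argument is the one-line summation above. A secondary (routine) point to handle cleanly is the reduction to dominant $\lambda$ and the bookkeeping of which Weyl element $w_a$ appears at each marked point --- but since the hypothesis is quantified over \emph{all} tuples $(w_a)_{\bm a} \in W^{\bm a}$, this bookkeeping is automatically absorbed.
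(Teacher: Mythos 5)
Your global plan --- invoke Hilbert--Mumford to extract a destabilizing one-parameter subgroup $\lambda$ and conjugate so that $P_\lambda \supseteq B$ is a proper standard parabolic, then pair residues against a character $\chi$ of $\mf p$ and feed $\sum_{\bm a}\Res(\mc A_a) = 0$ into~\eqref{eq:stability_condition} for a contradiction --- is the same as the paper's. The genuine gap sits exactly where you flag ``the main obstacle'': identifying $\chi\bigl(\Res(\mc A_a)\bigr)$ with $\chi\bigl(w_a(\Lambda'_a)\bigr)$. Your route via the limit $\lim_{t\to 0}\lambda(t).\mc A_a$ does not deliver this. That $\chi$ is constant along the flow is immediate; but the residue of the limit is merely the Levi-projection of $\Res(\mc A_a)$, a priori unrelated to $\Lambda'_a$. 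The relation to $\Lambda'_a$ does not come from the flow at all --- it comes from $\mc A_a \in \mc O'_a = G_{s_a}.\mc A'_a$, i.e.\ from a truncated gauge transformation whose \emph{Birkhoff} factor mixes the residue with the higher coefficients, and controlling that mixing is the real content.

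The paper never passes to the limit. It chooses $g_a \in G$ carrying $\mc A_a$ into the $\on{Bir}_{s_a}$-orbit of $\mc A'_a$, and then invokes Lem.~\ref{lem:closed_subgroup} (applied to $H := g_aPg_a^{-1}$) to show that the intervening Birkhoff element can be taken \emph{inside the parabolic}: one may write $g_a^{-1}\bm h_ag_a = e^{-\bm X}$ with all $X_j \in \mf p$, and moreover $\Ad_{g_a^{-1}}$ of the coefficients of $\mc A'_a$ also lands in $\mf p$. Expanding the exponential then gives
\begin{equation}
	\Res(\mc A_a) - \Ad_{g_a^{-1}}(\Lambda'_a) \in [\mf p,\mf p],
\end{equation}
and only at this point does $\chi$ do the work: it kills $[\mf p,\mf p]$, while $\Ad_{g_a^{-1}}(\Lambda'_a)$ is a semisimple element of $\mf p$ that is $G$-conjugate to $\Lambda'_a \in \mf t$, hence $\Ad_P$-conjugate to some $w_a(\Lambda'_a)\in\mf t$, so $\chi\bigl(\Res(\mc A_a)\bigr)=\chi\bigl(w_a(\Lambda'_a)\bigr)$. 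Your proposed middle lemma is true, but the sketch you give for it (via~\eqref{eq:decoupling}--\eqref{eq:orbit_from_extended_orbit} and the $\on T^*\!G$-factor) never produces the crucial $[\mf p,\mf p]$-membership of the Birkhoff correction --- that is precisely the content of Lem.~\ref{lem:closed_subgroup}, which you would have to rediscover. The extended-orbit picture also trades a $\mf t$-conjugacy statement for $\mf l'$-projection data, leaving a residual $\check\mu'$-term that would then need a separate argument to be killed by $\chi$.
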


\begin{proof}
	Suppose that there is a \emph{nonstable} point $\bm{\mc A} = (\mc A_a)_{\bm a} \in M$.
	By the Hilbert--Mumford criterion~\cite{hilbert_1893_ueber_die_vollen_invariantensysteme},
	there exists a 1-parameter subgroup $\lambda \cl \mb C^{\ts} \to G$,
	with $\lambda(\mb C^{\ts}) \nsse Z(G)$,
	such that $\lim_{t \to 0} \bigl( \lambda(t).\bm{\mc A} \bigr)$ exists in $\prod_{\bm a} \mf g_{s_a}^{\dual}$.
	This implies that the coefficients of $\mc A_a$ lie in the Lie algebra $\mf p \ceqq \Lie(P)$ of the proper parabolic subgroup $P = P_{\lambda} \ssne G$ determined by $\lambda$.
	Up to acting by $G$ on $\bm{\mc A}$,
	one can also assume that $B \sse P$.

	Now for $a \in \bm a$ choose an element $g_a \in G$ such that $\Ad_{g_a}^{\dual} (\mc A_a) \in \mf g_{s_a}^{\dual}$ lies in the $\Ad_{\on{Bir}_{s_a}}^{\dual}$-orbit of $\mc A'_a$.
	Lem.~\ref{lem:closed_subgroup} (for the subgroup $H \ceqq g_a P g_a^{-1} \sse G$) implies that the coefficients of the latter lie in $\Ad_{g_a}(\mf p) \sse \mf g$,
	and that there exists $\bm h_a \in g_a P_{s_a} g_a^{-1} \sse G_{s_a}$ such that:
	(i) $\bm h_a \in \on{Bir}_{s_a}$;
	and (ii) $\Ad_{\bm h_ag_a}^{\dual} \bigl( \mc A_a \bigr) = \mc A'_a$.
	Write now
	\begin{equation}
		g_a^{-1}\bm h_ag_a
		= e^{-\bm X},
		\qquad \bm X
		= \sum_{j = 1}^{s_a-1} X_j\varpi_a^j,
		\quad X_j \in \mf p.
	\end{equation}
	Then the equality $\Ad_{g_a^{-1}\bm h_ag_a}^{\dual} \bigl( \mc A_a \bigr) = \Ad_{g_a^{-1}}^{\dual} \bigl( \mc A'_a \bigr) \in \mf g_{s_a}^{\dual}$ yields in particular
	\begin{equation}
		\Res \bigl( \mc A_a \bigr)
		= \Ad_{g_a^{-1}} (\Lambda'_a) + \sum_{j = 1}^{s_a-1} \sum_{n > 0} \frac 1{n!} X_{j,n},
	\end{equation}
	where
	\begin{equation}
		X_{j,n}
		\ceqq \sum_{\substack{i_1,\dc,i_n > 0 \\ i_1 + \dm + i_n = s_a - j}} \ad_{X_{i_1}} \dm \ad_{X_{i_n}} \bigl( \Ad_{g_a^{-1}} ( A'_{a,s_a-j} ) \bigr) \in [\mf p,\mf p].
	\end{equation}

	Choose now a character $\chi \in \Hom_{\Lie}(\mf p,\mb C) \simeq \bigl( \mf p \bs [\mf p,\mf p] \bigr)^{\!\dual}$.
	It follows that
	\begin{equation}
		\label{eq:vanishing_character}
		0
		= \chi(0)
		= \chi \Biggl( \sum_{\bm a} \Res \bigl( \mc A_a \bigr) \Biggr)
		= \sum_{\bm a} \chi (\ul\Lambda'_a),
		\qquad \ul\Lambda'_a
		\ceqq \Ad_{g_a^{-1}} (\Lambda'_a).
	\end{equation}
	But $\ul{\Lambda}'_a$ is a semisimple element of $\mf p$,
	lying in the $\Ad_P$-orbit of an element $\wt{\Lambda}'_a \in \mf t$.
	On the other hand,
	by construction,
	$\ul{\Lambda}'_a$ lies in the $\Ad_G$-orbit of $\Lambda'_a \in \mf t$,
	so that $\wt\Lambda'_a = w_a(\Lambda'_a)$ for a suitable group element $w_a \in W$:
	the $\Ad_P$-invariance of $\chi$ now yields $\sum_{\bm a} \chi \bigl( \wt\Lambda'_a \bigr) = 0$.
\end{proof}

\begin{lemm}
	\label{lem:closed_subgroup}

	Choose a UTS principal part $\mc A' \in \mf g'_s$ as in~\eqref{eq:normal_form_reindexing}.
	Consider also a connected closed subgroup $H \sse G$,
	with Lie algebra $\mf h \ceqq \Lie(H)$,
	and let $\mc A = \sum_{j = 0}^{s-1} A_j \varpi^{-j-1} \dif \varpi \in \mf g_s^{\dual}$ be an element of the $\Ad_{\on{Bir}_s}^{\dual}$-orbit of $\mc A'$.
	If $A_0,\dc,A_{s-1} \in \mf h$,
	then:
	\begin{enumerate}
		\item
		      the same holds for the coefficients of $\mc A'$;

		\item
		      and there exists a group element $\bm h \in H_s \ceqq H(\ms O_s) \sse G_s$ such that:
		      \begin{enumerate}
			      \item
			            $\bm h \in \on{Bir}_s$;

			      \item
			            and $\Ad_{\bm h}^{\dual} \bigl( \mc A \bigr) = \mc A' \in \mf g_s^{\dual}$.
		      \end{enumerate}
	\end{enumerate}
\end{lemm}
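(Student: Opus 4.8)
The plan is to prove, by descending induction on $m \in \set{1,\dc,s}$, a level-by-level refinement. Call the coefficient of $\varpi^{-s+k}\dif\varpi$ in a principal part of pole order $\leq s$ its \emph{$k$-th coefficient} ($0 \leq k \leq s-1$; the $0$-th being the leading one, the $(s-1)$-th the residue), write $A'_0,\dc,A'_{s-1} \in \mf g$ for the (semisimple, pairwise commuting) coefficients of $\mc A'$, and set $\mf l_0 \ceqq \mf g$ and $\mf l_\ell \ceqq \mf g^{A'_{s-1}} \cap \dm \cap \mf g^{A'_{s-\ell}}$ for $\ell \in \set{1,\dc,s-1}$, as in \S~\ref{sec:levi_and_parabolic_filtrations}. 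The statement $(P_m)$ is: \emph{if $\mc B \in \mf g_s^\dual$ has all coefficients in $\mf h$, lies in the $\Ad^\dual_{\on{Bir}_s}$-orbit of $\mc A'$ (identified, via \S~\ref{sec:tcla_duality}, with its truncated based-gauge orbit), and shares the $k$-th coefficient of $\mc A'$ for every $k < m$, then the coefficients of $\mc A'$ also lie in $\mf h$, and $\mc B$ lies in the orbit of $\mc A'$ under $\on{Bir}_s \cap H_s = \exp \bigl( \mf h \ots (\mf m \slash \mf m^s) \bigr)$.} Since the $\on{Bir}_s$-action preserves the leading coefficient---clear from~\eqref{eq:based_gauge_action}, as every $\ad_{\bm X}$ strictly raises the $\varpi$-degree---$(P_s)$ merely says $\mc B = \mc A'$, while $(P_1)$ applied to $\mc B \ceqq \mc A$ gives exactly the two assertions of the lemma (its hypothesis for $k < 1$, equality of leading coefficients, being automatic, and one inverting the produced group element if needed to match the stated direction of the action). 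So it remains to deduce $(P_m)$ from $(P_{m+1})$ for $1 \leq m \leq s-1$.

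The crux is a \emph{linearisation} of the Birkhoff action at a fixed level. First I would check, straight from~\eqref{eq:based_gauge_action}, that for $\bm Z = \sum_{q \geq 1} Z_q\varpi^q$ with $Z_q \in \mf l_{\max(m-q,0)}$, and for \emph{any} principal part $\mc C$ sharing the $k$-th coefficients of $\mc A'$ for all $k < m$, the transformation $e^{\bm Z}$ fixes those coefficients of $\mc C$ and changes its $m$-th one by \emph{exactly} $\pm\sum_{q=1}^{m} [Z_q, A'_{s-1-m+q}]$. Indeed, an iterated bracket $[Z_{q_1},[\dc,[Z_{q_n},A'_{s-1-k}]]]$ contributing to the coefficient of $\varpi^{-s+k'}\dif\varpi$ satisfies $q_1 + \dc + q_n = k'-k$, so whenever $k' < m$ (or $k' = m$ and $n \geq 2$) one gets $q_n < m-k$, whence $Z_{q_n} \in \mf l_{m-q_n} \sse \mf l_{k+1}$ centralises $A'_{s-1-k}$ and the term vanishes---leaving only, for $k' = m$, the displayed linear term. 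It follows that the subgroup of $\on{Bir}_s$ fixing the $k$-th coefficient of $\mc A'$ for all $k < m$ is the connected unipotent $\on{Bir}_s^{(m)} \ceqq \exp \bigl( \bops_{q \geq 1} \mf l_{\max(m-q,0)}\varpi^q \bigr)$, and along its orbit through $\mc A'$ the $m$-th coefficient sweeps out exactly $A'_{s-1-m} + V_m$, where $V_m \ceqq \sum_{\ell = 0}^{m-1} [\mf l_\ell, A'_{s-1-\ell}] \sse \mf g$.

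Second comes the \emph{semisimplicity} input. Assume $\mc B$ shares the $k$-th coefficient of $\mc A'$ for all $k < m$; then $A'_{s-1},\dc,A'_{s-m}$ are coefficients of the $\mf h$-valued $\mc B$, hence they lie in $\mf h$, so each $\ad_{A'_{s-j}}$ acts semisimply on $\mf g$ and preserves the subalgebra $\mf h$, and in their simultaneous eigenspace decomposition $\mf g = \bops_\chi \mf g_\chi$ one has $V_m = \bops_{\chi \neq 0} \mf g_\chi$ (the complement of the joint zero-weight space $\mf g_0 = \mf l_m$), the decomposition restricts to $\mf h = \bops_\chi (\mf h \cap \mf g_\chi)$, hence $V_m \cap \mf h = \sum_{\ell = 0}^{m-1}[\mf l_\ell \cap \mf h, A'_{s-1-\ell}]$; moreover $A'_{s-1-m}$ commutes with $A'_{s-1},\dc,A'_{s-m}$, hence lies in $\mf l_m$. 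Let $B_m \in \mf h$ be the $m$-th coefficient of $\mc B$; writing $\mc B = \bm g.\mc A'$ with $\bm g \in \on{Bir}_s$, the agreement hypothesis puts $\bm g \in \on{Bir}_s^{(m)}$, so $B_m - A'_{s-1-m} \in V_m$ by the first step. Decomposing $B_m = (B_m)_0 + (B_m)_1$ along $\mf g = \mf l_m \ops V_m$ and using $A'_{s-1-m} \in \mf l_m$ forces $(B_m)_0 = A'_{s-1-m}$; restricting to $\mf h$ gives $(B_m)_0 \in \mf h$ and $(B_m)_1 \in V_m \cap \mf h$, i.e.\ $A'_{s-1-m} \in \mf h$ and $B_m - A'_{s-1-m} \in V_m \cap \mf h$. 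Choosing $Z_q \in \mf l_{\max(m-q,0)} \cap \mf h$ ($1 \leq q \leq m$) so that $\pm\sum_{q=1}^m [Z_q, A'_{s-1-m+q}] = A'_{s-1-m} - B_m$---possible since $V_m \cap \mf h = \sum_\ell [\mf l_\ell \cap \mf h, A'_{s-1-\ell}]$---and setting $\bm Z \ceqq \sum_{q=1}^m Z_q\varpi^q \in \mf h \ots (\mf m \slash \mf m^s)$, the linearisation makes $\mc B' \ceqq e^{\bm Z}.\mc B$ share the $k$-th coefficient of $\mc A'$ for all $k \leq m$, still $\mf h$-valued (as $\mf h$ is a subalgebra) and still in the $\Ad^\dual_{\on{Bir}_s}$-orbit of $\mc A'$; applying $(P_{m+1})$ to $\mc B'$ and composing $e^{\bm Z}$ with the element of $\on{Bir}_s \cap H_s$ it supplies proves $(P_m)$.

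The hard part is the linearisation: one must be sure the nonlinear cross-terms of the Birkhoff gauge action~\eqref{eq:based_gauge_action} genuinely cancel, so that level by level it is controlled by the honestly linear maps $Z \mapsto [Z, A'_{s-1-\ell}]$, after which the semisimplicity splitting closes the induction; everything else is formal. (One could instead run the induction through the subleading-term recursion of Prop.~\ref{prop:recursion_for_darboux}, descending from $G_s$ to $L_1(\ms O_{s-1})$, but the direct count above seems cleaner.)
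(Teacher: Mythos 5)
Your argument is correct and takes a genuinely different, more self\hyp{}contained route than the paper's. The paper's proof first invokes Babbitt--Varadarajan (their Prop.~9.3.2) to gauge $\mc A$ within $H_s \cap \on{Bir}_s$ into a form whose coefficients commute with the leading term $A'_{s-1}$, then shows that a $\on{Bir}_s$-element taking this normalized $\mc A$ to $\mc A'$ has all its coefficients in $\mf l_1$, and finally strips off the leading term and recurses on the pole order inside the (connected, closed) subgroup $H \cap L_1 \sse L_1$. You instead stay inside $G_s$ throughout, running a descending induction on the number $m$ of agreeing coefficients, powered by an explicit level-by-level linearisation of the truncated Birkhoff action (higher-order terms vanish by the nested-centralizer condition on the $Z_q$) and the Fitting decomposition $\mf g = \mf l_m \ops V_m$. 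Both proofs hinge on the same semisimple direct-sum decompositions, and both peel off one coefficient at a time; what yours buys is that the Babbitt--Varadarajan normalization step is \emph{derived} rather than cited, and that there is no bookkeeping of descending to $L_1(\ms O_{s-1})$ and lifting back to $G_s$. What it costs is the more delicate coefficient-level linearisation estimate, plus a stabilizer computation the paper avoids.

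One step you should not compress: the converse half of your stabilizer characterization---that any $\bm g \in \on{Bir}_s$ fixing the first $m$ (student-indexed) coefficients of $\mc A'$ lies in $\on{Bir}_s^{(m)} = \exp \bigl( \bops_{q \geq 1} \mf l_{\max(m-q,0)}\varpi^q \bigr)$---is \emph{not} a formal consequence of the displayed linearisation, which shows only the opposite containment, yet your induction pivots on it (``the agreement hypothesis puts $\bm g \in \on{Bir}_s^{(m)}$''). It is true, but requires its own short argument: write $\bm g = \prod_{i > 0} e^{X_i\varpi^i}$ via the unique factorization of the based gauge group, and show recursively that $X_i \in \mf l_{\max(m-i,0)}$, separating the contributions to each coefficient by the same direct-sum decomposition of $V_m$ into the pieces $\bigl[ \mf l_\ell, A'_{s-1-\ell} \bigr]$ (which are in direct sum, being the nested Fitting complements). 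This deserves to be made explicit rather than packaged into an ``It follows''.
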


\begin{proof}[Proof postponed to~\ref{proof:lem_closed_subgroup}]
\end{proof}

\section*{Acknowledgements}

We thank (in alphabetical order):
A.~Alekseev,
F.~Bischoff,
D.~Calaque,
P.~Dimakis,
M.~Gualtieri,
D.~Lombardo,
M.~Mazzocco,
F.~Naef,
V.~Roubtsov,
V.~Toledano Laredo,
and A.~Vanhaecke;
for listening/answering to some questions.
We separately thank L.~Topley and R.~Wentworth for many helpful discussions throughout the inception/execution of this project.

\appendix

\section{Automorphisms of TCLAs}
\label{sec:tcla_automorphisms}

Here we describe the automorphism group of a TCLA
\begin{equation}
	\mf g_s =
	\mf g (\ms O_s)
	\simeq \mf g \llb \varpi \rrb \bs \varpi^s \mf g \llb \varpi \rrb,
\end{equation}
for any truncation $s \geq 1$.
(Up to isomorphism,
the choice of a uniformizer is w.l.o.g.)

\subsection{Reduction to the simple case}

The crux of the matter is the case where $\mf g$ is \emph{simple} (but cf.~Rmk.~\ref{rmk:simple_assumption}).
Indeed,
if $\mf g$ decomposes into simple ideals $\mf I_i \sse \mf g$,
then $\Aut_{\on{Lie}}(\mf g_s)$ is a split extension of the group of permutations of isomorphic simple summands,
by the direct product of the automorphism groups of the TCLAs $\mf I_i(\ms O_s)$.

\subsection{Filtrations}

Hereafter,
for an integer $i \in \set{0,\dc,s-1}$ write $\mf g_s^{(i)} \ceqq \mf g \ots \varpi^i \sse \mf g$,
so that there is a decreasing Lie-algebra filtration of $\mf g_s$,
with filtered pieces
\begin{equation}
	\label{eq:tcla_filtration}
	\mf g_s^{(\geq i)} \ceqq \bops_{j = i}^{s-1} \mf g_s^{(j)},
	\qquad i \in \set{0,\dc,s-1}.
\end{equation}

Then note that:

\begin{lemm}
	\label{lem:nice_automorphisms}

	The Lie-algebra automorphisms of $\mf g_s$ preserve the filtration~\eqref{eq:tcla_filtration}.
\end{lemm}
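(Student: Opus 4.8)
The plan is to recognize each filtered piece $\mf g_s^{(\geq i)}$ via an intrinsic Lie-theoretic recipe that every automorphism is forced to respect, so that invariance follows formally rather than by inspecting automorphisms directly.

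First I would dispose of the trivial case $i = 0$, where $\mf g_s^{(\geq 0)} = \mf g_s$ itself. For $i \geq 1$, the key point is that here $\mf g$ is simple, hence \emph{perfect}: $[\mf g,\mf g] = \mf g$. From this one gets $[\mf g \ots \varpi^k,\mf g \ots \varpi^l] = \mf g \ots \varpi^{k+l}$ inside $\mf g_s$ (understood to vanish once $k+l \geq s$), and a short computation then yields $\bigl[ \mf g_s^{(\geq i)},\mf g_s^{(\geq j)} \bigr] = \mf g_s^{(\geq i+j)}$ for all $i,j \geq 1$, with the convention $\mf g_s^{(\geq m)} = (0)$ for $m \geq s$. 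Specializing $j = 1$ and iterating, this exhibits the pieces $\mf g_s^{(\geq i)}$ (for $i \geq 1$) as precisely the terms of the \emph{lower central series} of the subalgebra $\mf{bir}_s = \mf g_s^{(\geq 1)}$, in the notation of~\eqref{eq:tcla_filtration}.

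Next I would invoke Rmk.~\ref{rmk:nilradicals_etc}: the ideal $\mf{bir}_s \sse \mf g_s$ is the nilradical, hence a \emph{characteristic} ideal, so any $\varphi \in \Aut_{\on{Lie}}(\mf g_s)$ restricts to a Lie-algebra automorphism of $\mf{bir}_s$. Since the lower central series is preserved by Lie-algebra isomorphisms, $\varphi$ maps each of its terms onto itself, i.e.\ $\varphi \bigl( \mf g_s^{(\geq i)} \bigr) = \mf g_s^{(\geq i)}$ for every $i \in \set{1,\dc,s-1}$; together with the case $i = 0$ this is the assertion.

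I do not expect a genuine obstacle here: the argument is short once the filtration is identified with the lower central series of the nilradical. The one point deserving care is that it rests squarely on the perfectness of $\mf g$—for a reductive $\mf g$ with nontrivial centre, the pieces of~\eqref{eq:tcla_filtration} are no longer recoverable from the lower central series and the statement can fail—so the prior reduction to the case of simple $\mf g$ is used in an essential way.
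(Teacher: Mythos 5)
Your argument is correct and is essentially the paper's proof: the paper's base case ($\mathfrak{g}_s^{(\geq 1)}$ is the nilradical, hence characteristic) and inductive step (using $[\mathfrak g,\mathfrak g]=\mathfrak g$ to get $\mathfrak{g}_s^{(\geq i+1)}=[\mathfrak{g}_s^{(\geq 1)},\mathfrak{g}_s^{(\geq i)}]$) are exactly what you repackage as the identification of the filtration with the lower central series of $\mathfrak{bir}_s$. Your closing caveat about perfectness being essential is also accurate and matches the paper's standing hypothesis that $\mathfrak g$ is simple (cf.\ Rmk.~\ref{rmk:simple_assumption}).
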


\begin{proof}
	By induction on $i \in \set{1,\dc,s-1}$.
	The base follows from the fact that $\mf g_s^{(\geq 1)} = \mf{bir}_s$ is the nilradical,
	and the inductive step from the equality $\mf g = [\mf g,\mf g]$.
\end{proof}

\subsection{Semidirect factors}

We now define three subgroups of $\Aut_{\on{Lie}}(\mf g_s)$,
and prove that they generate it.
They consist of:
(i) automorphisms of $\mf g$,
acting in $\varpi$-graded fashion on $\mf g_s$;
(ii) inner automorphisms of $\mf g_s$,
coming from the unipotent radical $\on{Bir}_s = \on{Rad}_u(G_s)$ of the corresponding TCLG;
and (iii) ring automorphisms of $\ms O_s$,
acting `polynomially' on $\mf g_s$.
Precisely,
let:
\begin{equation}
	\label{eq:diag_auto_subgroup}
	\Aut_{\diag}(\mf g_s)
	\ceqq \Set{ \varphi \ots \Id_{\ms O_s} | \varphi \in \Aut_{\on{Lie}}(\mf g) },
\end{equation}
and
\begin{equation}
	\label{eq:conj_auto_subgroup}
	\Aut_{\on{conj}}(\mf g_s)
	\ceqq \Set{ \Ad_{\bm g} | \bm g \in \on{Bir}_s },
\end{equation}
and
\begin{equation}
	\label{eq:poly_auto_subgroup}
	\Aut_{\on{poly}}(\mf g_s)
	\ceqq \Set{ \Id_{\mf g} \ots F | F \in \ms O_s^{\ts} }.
\end{equation}

\begin{rema}
	Identify $\on{Bir}_s = \exp(\mf{bir}_s) \sse G_s$ with its Lie algebra.
	Then~\eqref{eq:diag_auto_subgroup} acts on~\eqref{eq:conj_auto_subgroup},
	and there is a group embedding $G_s \hra \Aut_{\diag}(\mf g_s) \lts \Aut_{\on{conj}}(\mf g_s)$.
\end{rema}

\begin{theo}
	\label{thm:tcla_automorphisms}

	There is a nested semidirect group factorization
	\begin{equation}
		\label{eq:semidirect_auto_tcla}
		\Aut_{\on{Lie}}(\mf g_s)
		= \bigl( \Aut_{\diag}(\mf g_s) \lts \Aut_{\on{conj}}(\mf g_s) \bigr) \rts \Aut_{\on{poly}}(\mf g_s).
	\end{equation}
\end{theo}

\begin{proof}
	For $i \in \set{0,\dc,s-1}$ let $p_i \cl \mf g_s \thra \mf g_s^{(i)} \simeq \mf g$ be the canonical projection.
	Given an automorphism $\Phi \in \Aut_{\on{Lie}}(\mf g_s)$,
	define
	\begin{equation}
		\Phi_{i,j} \ceqq p_i \circ \eval[1]\Phi_{\mf g_s^{(j)}},
		\qquad i,j \in \set{0,\dc,s-1},
	\end{equation}
	and regard it as a $\mb C$-linear map $\mf g \to \mf g$.
	(Lem.~\ref{lem:nice_automorphisms} implies that $\Phi_{i,j} = 0$ if $i < j$.)

	Now,
	by hypothesis:
	\begin{equation}
		\Phi \bigl( [X, Y]\varpi^{i+j} \bigr)
		= \bigl[ \Phi(X \varpi^i), \Phi(Y \varpi^j) \bigr],
		\qquad X,Y \in \mf g,
	\end{equation}
	which is only relevant if $i+j < s$.
	Applying $p_k$ to both sides yields
	\begin{equation}
		\label{eq:component_map_relations}
		\Phi_{k,i+j} \bigl( [X,Y] \bigr)
		= \sum_{k_1+k_2 = k} \bigl[ \Phi_{k_1,i}(X),\Phi_{k_2,j}(Y) \bigr],
		\qquad k \in \set{0,\dc,s-1}.
	\end{equation}

	Now taking $i = j = k \ceqq 0$ in~\eqref{eq:component_map_relations} shows that $\Phi_{0,0}$ is a Lie-algebra endomorphism of $\mf g$;
	and it must be an automorphism (by Lem.~\ref{lem:nice_automorphisms},
	else $\Phi$ would \emph{not} be surjective).
	To conclude,
	since $\Phi_{0,0} \otimes \Id_{\ms O_s} \in \Aut_{\diag}(\mf g_s)$,
	it is enough to prove that if $\Phi_{0,0} = \Id_{\mf g}$ then $\Phi$ lies in the subgroup generated by $\Aut_{\on{conj}}(\mf g_s)$ and $\Aut_{\on{poly}}(\mf g_s)$.

	To this end,
	fix $k \in \set{1,\dc,s-1}$,
	and suppose in addition that $\Phi_{0,0} = \Id_\mf g$ and $\Phi_{l,0} = 0$ for all $l \in \set{1,\dc,k-1}$.
	Then,
	taking $i = j \ceqq 0$ in ~\eqref{eq:component_map_relations} yields
	\begin{equation}
		\Phi_{k,0} \bigl( [X,Y] \bigr)
		=  \bigl[ \Phi_{k,0}(X),Y \bigr] + \bigl[ X,\Phi_{k,0}(Y) \bigr],
	\end{equation}
	i.e.,
	$\Phi_{k,0}$ is a derivation of $\mf g$.
	But here all derivations are \emph{inner},
	and so there exists $Z \in \mf g$ such that $\Phi_{k,0} = \ad_Z \in \mf{der}(\mf g)$.
	Letting $\bm g \ceqq e^{Z\varpi^k} \in \on{Bir}_s$,
	the automorphism $\Phi' \ceqq \Ad_{\bm g^{-1}} \circ \Phi$ of $\mf g_s$ satisfies $\Phi'_{0,0} = \Id_\mf g$ and $\Phi'_{1,0} = \dm = \Phi'_{k,0} = 0$.

	Since $\Ad_{\bm g} \in \Aut_{\on{conj}}(\mf g_s)$,
	it suffices to show the following:
	if $\Phi_{0,0} = \Id_\mf g$,
	and $\Phi_{k,0} = 0$ for $k > 0$,
	then $\Phi \in \Aut_{\on{poly}}(\mf g_s) \sse \Aut_{\on{Lie}}(\mf g_s)$.
	With such hypotheses,
	taking $(i,j) \ceqq (0,1)$ in~\eqref{eq:component_map_relations} yields
	\begin{equation}
		\Phi_{k,1} \bigl( [X,Y] \bigr)
		= \bigl[ X,\Phi_{k,1}(Y) \bigr],
	\end{equation}
	i.e.,
	$\Phi_{k,1}$ is a $\mf g$-module endomorphism of $\mf g$.
	By Schur's Lemma,
	there exist numbers $\lambda_k \in \mb C$ such that $\Phi_{k,1} = \lambda_k \cdot \Id_{\mf g} \in \End_{\mb C}(\mf g)^{\mf g} \simeq \mb C$.
	Moreover,
	one necessarily has $\lambda_1 \neq 0$,
	else $\Phi$ would \emph{not} be surjective.
	Finally,
	consider the ring automorphism $F \cl \ms O_s \to \ms O_s$ given by $\varpi \mt \sum_{k = 1}^{s-1} \lambda_k \varpi^k$:
	then $\Phi =  \Id_{\mf g} \ots F$,
	because $\Phi$ is determined by $\Phi_{k,0}$ and $\Phi_{k,1}$,
	noting that $\mf g_s$ is generated as a Lie algebra by $\mf g_s^{(0)} \oplus \mf g_s^{(1)} \sse \mf g_s$.
\end{proof}

\begin{rema}
	\label{rmk:simple_assumption}

	The fact that $\mf g$ is simple is only used at the very last step of the proof of Thm.~\ref{thm:tcla_automorphisms}.
	Everything else applies verbatim to a \emph{semisimple} Lie algebra.
\end{rema}

\begin{rema}
	Equivalently,
	the splitting~\eqref{eq:semidirect_auto_tcla} amounts to the direct-product decomposition $\Out(\mf g_s) \simeq \Out(\mf g) \times \ms O_s^{\ts}$,
	for the groups of outer automorphisms,
	noting that the two subgroups $\Aut_{\diag}(\mf g_s), \Aut_{\on{poly}}(\mf g_s) \sse \Aut_{\on{Lie}}(\mf g_s)$ commute.
\end{rema}

\section{Deferred proofs}
\label{sec:proofs}

\subsection{Proof of Lem.~\ref{lem:image_inverse_shapovalov}}
\label{proof:lem_image_inverse_shapovalov}

Recall that $F_c \in \wt M^+_c \wh \ots \wt M^-_c$ is $\mf g_s$-invariant.
(Here one must take completions.)
Then the evaluation map
\begin{equation}
	\ev_{\mc A'} \cl U(\bm{\mf u}^+) \lra \mf g_s^{\dual},
	\qquad X \lmt \ad^{\dual}_X (\mc A'),
\end{equation}
yields a $U(\bm{\mf u}^+)$-linear composition
\begin{equation}
	\wt M^+_c \wh \ots \wt M^-_c \lxra{\simeq} \wt M^+_c \wh \ots N(\bm{\mf u}^+) \lxra{1 \ots \ev_{\mc A'}} M^+_c \ots \mf g_s^{\dual}.
\end{equation}
The left-hand side of~\eqref{eq:image_inverse_shapovalov} is the image of $F_c$ under this arrow:
so it is a $\bm{\mf u}^+$-invariant element,
determined by its component inside $M^+_c[0] \ots \mf g_s^{\dual}$,
cf.~\cite{calaque_felder_rembado_wentworth_2024_wild_orbits_and_generalised_singularity_modules_stratifications_and_quantisation}.
Hence,
it is enough to prove that the right-hand side of~\eqref{eq:image_inverse_shapovalov} is also $\bm{\mf u}^+$-invariant.
Compute then
\begin{equation}
	\label{eq:first_term_invariant_element}
	\Delta(X_j)(v^+_c \ots \mc A') = v^+_c \ots \ad^{\dual}_{X_j} (\mc A'),
\end{equation}
and
\begin{equation}
	\label{eq:second_term_invariant_element}
	\Delta(X_j) \bigl( (Y_i v^+_c) \ots \ad^{\dual}_{X_i} (\mc A') \bigr) = \bigl( [X_j,Y_i] v^+_c \bigr) \ots \ad^{\dual}_{X_i} (\mc A') + (Y_i v^+_c) \ots \ad^{\dual}_{X_j} \ad^{\dual}_{X_i} (\mc A').
\end{equation}
Now denote by $\pi^- \cl \mf g_s \thra \bm{\mf u}^-$ and $\pi_0 \cl \mf g_s \thra \bm{\mf l}$ the canonical projections with respect to the splitting $\mf g_s = \bm{\mf u}^+ \ops \bm{\mf l} \ops \bm{\mf u}^-$.
Then
\begin{equation}
	[X_j,Y_i] v^+_c = \pi_0 \bigl( [X_j,Y_i] \bigr) v^+_c + \pi^-\bigl( [X_j,Y_i] \bigr) v^+_c = c \delta_{ji} v^+_c + \sum_k c^k_{ji} Y_k v^+_c,
\end{equation}
expanding the second summand in the given basis of $\bm{\mf u}^-$,
because
\begin{equation}
	\Braket{ \mc A', \pi_0 \bigl( [X_j,Y_i] \bigr) } = \Braket{ \mc A',[X_j,Y_i] } = \delta_{ji}.
\end{equation}

Hence,
combining~\eqref{eq:first_term_invariant_element}--\eqref{eq:second_term_invariant_element},
it is enough to prove that
\begin{equation}
	\sum_k (Y_k v^+_c) \ots \Bigl( \ad^{\dual}_{X_j} \ad^{\dual}_{X_k} + \sum_i c^k_{ji} \ad^{\dual}_{X_i} \Bigr) \mc A' = 0 \in M^+_c \ots \mf g_s^{\dual};
\end{equation}
but we will actually show the following stronger identity:
\begin{equation}
	\label{eq:coadjoint_action_identity}
	\Bigl( \ad^{\dual}_{X_j} \ad^{\dual}_{X_k} + \sum_i c^k_{ji} \ad^{\dual}_{X_i} \Bigr) \mc A'
	= 0 \in \mf g_s^{\dual}.
\end{equation}
To this end,
the left-hand side of~\eqref{eq:coadjoint_action_identity} vanishes on $\bm{\mf p}^+ \sse \mf g_r$,
since $\mc A'$ is the restriction of the character $\bm\chi^+$.
Regard it as an element of $( \bm{\mf u}^- )^{\dual} \simeq ( \bm{\mf p}^+ )^{\perp} \sse \mf g_r^{\dual}$,
and compute
\begin{equation}
	\label{eq:coadjoin_action_formal_type_1}
	\Braket{ \ad^{\dual}_{X_i} (\mc A'), Y_l }
	= - \Braket{ \mc A',[X_i,Y_l] } = - \delta_{il},
\end{equation}
and
\begin{align}
	\label{eq:coadjoin_action_formal_type_2}
	\Braket{  \ad^{\dual}_{X_j} \ad^{\dual}_{X_k} (\mc A'), Y_l }
	 & = \Braket{ \mc A', \bigl[X_k,[X_j,Y_l] \bigr] }
	=\sum_m c^m_{jl} \Braket{ \mc A', [X_k,Y_m] }      \\
	 & = \sum_m c^m_{jl} \delta_{km}
	= c^k_{jl},
\end{align}
since $\ad_{\bm{\mf l}}^{\dual} (\mc A') = (0)$.
Now~\eqref{eq:coadjoint_action_identity} follows by taking linear combinations of~\eqref{eq:coadjoin_action_formal_type_1}--\eqref{eq:coadjoin_action_formal_type_2}.

\subsection{Proof of Lem.~\ref{lem:coordinate_invariance}}
\label{proof:lem_coordinate_invariance}

Consider an irregular type $Q'$,
with semisimple commuting coefficients $A'_1,\dc,A'_{s-1} \in \mf g$.
Compute
\begin{equation}
	\label{eq:pullback_derivative_irregular_part}
	F^*(\dif Q')
	= \dif \, \bigl( Q' \bigl( F(\varpi) \bigr) \bigr)
	= \sum_{i = 1}^{s-1} A'_i \varpi^{-i-1} \cdot \wt f_i \dif \varpi,
	\qquad \wt f_i \ceqq f^{-i} (f + \varpi f'),
\end{equation}
using~\eqref{eq:pullback_action},
and commuting $F^*$ past the structural derivation $\dif \cl \mf g (\!( \varpi )\!) \to \mf g (\!( \varpi )\!) \dif \varpi$.
Now $\wt f_i(0) \neq 0$ for $i \in \set{1,\dc,s-1}$,
and~\eqref{eq:pullback_derivative_irregular_part} has no residue.
Thus,
there exists a new tuple $\bigl( \wt A'_1,\dc,\wt A'_{s-1} \bigr) \in \mf g^s$ such that:
(i) $\wt A'_i$ is a linear combination of $A'_i,\dc,A'_{s-1}$;
(ii) the coefficient of $A'_i$ in that linear combination is nonzero;
and (iii) one has
\begin{equation}
	F^*(\dif Q') - \dif \wt Q' \in \mf g \llb \varpi \rrb \dif \varpi,
	\qquad
	\wt Q' \ceqq \sum_{i = 1}^{s-1} \wt A'_i \frac{\varpi^{-i}}{-i}.
\end{equation}
Thus,
the elements $\wt A'_1,\dc,\wt A'_{s-1}$ lie in a Cartan subalgebra $\mf g$.

Now choose a semisimple residue $\Lambda' \in \mf g$ which commutes with $Q'$,
and let $\mc A' \ceqq \Lambda' \varpi^{-1}\dif \varpi + \dif Q' \in \mf g'_s$,
so that
\begin{equation}
	\wt{\mc A}'
	\ceqq F^*(\mc A')
	= \Lambda' \varpi^{-1}\dif \varpi + \dif \wt Q'.
\end{equation}
The above implies that $\Lambda'$ commutes with $\wt Q'$,
whence $\wt{\mc A}' \in \mf g'_s$;
moreover,
it follows that the nested $\ad_\mf g$-stabilizers are invariant,
and so:
(i) the second statement is proven;
and (ii) $\mf l_{Q'} = \mf l_{\wt Q'}$,
preserving nonresonance.

\subsection{Proof of Lem.~\ref{lem:flatness_high_order}}
\label{proof:lem_flatness_high_order}

Choose a vector $X \in \mf g \simeq \mf g^{\dual}$.
Using~\eqref{eq:orbit_from_extended_orbit},
the fibre over $X$ can be described as follows:
\begin{align}
	(\mu')^{-1}(X)
	 & = \Set{ \bigl( (g,\Lambda),\dif Q \bigr) \in \on T^*\! G \ts \check{\mc O}' | \Lambda = X, \quad \check\mu'(\dif Q) - \pi' \bigl( \Ad^{\dual}_g(\Lambda) \bigr) = - \Lambda' } \bs \, L_s \\
	 & \simeq \Set{ (g,\dif Q) \in G \ts \check{\mc O}' | \check\mu'(\dif Q) = \pi' \bigl( \Ad^{\dual}_g(X) \bigr) - \Lambda'} \bs \, L_s                                                        \\
	 & = \bigl( G \ts (\check\mu')^{-1} (X') \bigr) \bs \, L_s,
	\qquad X' \ceqq \pi' \bigl(\Ad^{\dual}_g(X) \bigr) - \Lambda'.
\end{align}
Hence,
by hypothesis,
$(\mu')^{-1}(X) \neq \vn$.
Moreover,
the $L_s$-action on $G \ts \check{\mc O}'$ is free,
and so the dimension of the fibre does \emph{not} depend on $X$.

\subsection{Proof of Lem.~\ref{lem:unique_factorization}}
\label{proof:lem_unique_factorization}

For the first statement,
consider the truncated Taylor expansion
\begin{equation}
	\bm g = \exp \Biggl( \sum_{i = 1}^{s-1} X_i \varpi^i \Biggr) \cdot g,
	\qquad X_1,\dc,X_{s-1} \in \mf g,
\end{equation}
and analogously take the Ansätze
\begin{equation}
	\bm u^\pm = \exp \Biggl( \sum_{i = 1}^{s-1} Y^\pm_i \varpi^i \Biggr) \cdot u^\pm,
	\qquad Y^\pm_1,\dc,Y^\pm_{s-1} \in \wt{\mf n}^\pm_1,
	\quad u^\pm \in N^\pm_1,
\end{equation}
and
\begin{equation}
	\bm h = \exp \Biggl( \sum_{i = 1}^{s-1} X'_i \varpi^i \Biggr) \cdot h,
	\qquad X'_1,\dc,X'_{s-1} \in \mf l_1,
	\quad h \in L_1.
\end{equation}
Then we assume that $g = h u^- u^+$,
and must solve the equations
\begin{equation}
	\label{eq:recursive_factorization}
	\quad X_i + R_i
	= \Ad_{hu^-}(Y^+_i) + \Ad_h(Y^-_i) + X'_i,
	\qquad i \in \set{1,\dc,s-1},
\end{equation}
with initial condition $R_1 = 0$,
where the element $R_i \in \mf g$ only depends on
\begin{equation}
	\Set{ g,u^\pm,X_j,Y^\pm_j,X'_j | j \in \set{1,\dc,i-1}},
	\qquad i \in \set{2,\dc,s-1}.
\end{equation}
But there is a splitting $\mf g = \Ad_{u^-}(\wt{\mf n}^+_1) \ops \wt{\mf n}^-_1 \ops \mf l_1$,
and the Lie subalgebras $\wt{\mf n}^\pm_1$ are $\Ad_{L_1}$-invariant:
thus,
the equations~\eqref{eq:recursive_factorization} \emph{uniquely} determine $Y^\pm_i$ and $X'_i$,
for $i \in \set{1,\dc,s-1}$.

The second statement follows from this explicit construction of the factorization.

\subsection{Proof of Lem.~\ref{lem:algebraic_iso}}
\label{proof:lem_algebraic_iso}

Choose $\bm u^- \in \wt N^-_1$ and $\mc A_{s-1} \in \mc O'_{s-1}$,
and let $\ul{\mc A}_{s-1} \ceqq \mc A_{s-1} + \mc A'_{\on{top}} \in \mf g_s^{\dual}$.
Moreover,
write $\bm u^- = e^{\bm X}$ with $\bm X \in \wt{\mf n}^-_1$.
Then,
in the notation of~\eqref{eq:nilpotent_element}:
\begin{equation}
	\label{eq:adjoint_expansion}
	\bm Y'
	= \Ad^{\dual}_{\bm u^-} \bigl( \ul{\mc A}_{s-1} \bigr) - \ul{\mc A}_{s-1}
	= \sum_{k > 0} \frac 1{k!} (\ad^{\dual}_{\bm X})^k \bigl( \ul{\mc A}_{s-1} \bigr),
\end{equation}
using the $\ad^{\dual}_{\mf g_s}$-action.
The first statement follows,
since $\bigl[ \, \mf l_1,\mf n^\pm_1 \bigr] \sse \mf n^\pm_1$,
identifying as usual the dual of $\wt{\mf n}^+_1$ with a space of $\mf n^-_1$-valued 1-forms on $\Spec \mb C\llb \varpi \rrb$.

For the second statement we construct an inverse of~\eqref{eq:algebraic_iso}.
Choose thus $\bm Y' \in (\wt{\mf n}^+_1)^{\dual}$ and $\mc A_{s-1} \in \mc O'_{s-1}$:
the point is to construct an element $\bm u^- = e^{\bm X} \in \wt N_1$ such that~\eqref{eq:adjoint_expansion} holds.
To this end,
consider a 1-parameter subgroup $\lambda \cl \mb C^{\ts} \to L_1$ such that
\begin{equation}
	L_1
	= \Set{ g \in G | \lambda(t) \cdot g = g \cdot \lambda(t) \text{ for } t \in \mb C^{\ts}},
\end{equation}
and
\begin{equation}
	N^+_1
	= \Set{ g \in G | \lim_{t \to 0} \bigl( \lambda(t) \cdot g \cdot \lambda(t)^{-1} \bigr) = 1_G },
\end{equation}
cf.~\cite[Prop.~2.3]{richardson_1988_conjugacy_classes_of_n_tuples_in_lie_algebras_and_algebraic_groups}.
Letting $\theta \ceqq - \eval[1]{\od{\lambda}{t}}_{t = 1} \in \mf l_1$,
we obtain a Lie-algebra $\mb Z$-grading
\begin{equation}
	\mf g
	= \bops_{\mb Z} \mf g_k,
	\qquad \mf g_k
	\ceqq \Set{ X \in \mf g | \ad_{\theta}(X) = k X },
\end{equation}
and it follows that
\begin{equation}
	\mf l_1
	= \mf g_0,
	\qquad \mf n^\pm_1
	= \mf g_{\gtrless 0}
	\ceqq \bops_{\pm \mb Z_{> 0}} \mf g_k.
\end{equation}

Now take an integer $N > 0$ which is greater than all the eigenvalues of $\ad_{\theta}$,
and define also a $\mb Z$-grading on the (algebraic) loop algebra $\mc L \mf g \ceqq \mf g \bigl( \mb C [\varpi^{\pm 1}] \bigr)$ via
\begin{equation}
	\deg(\mf g_k \ots \mb C \varpi^i)
	\ceqq k + i N,
	\qquad i,k \in \mb Z.
\end{equation}
Denote by $(\mc L \mf g)_j \sse \mc L \mf g$ the graded pieces,
and again by $(\mc L \mf g)_{\gtrless} 0$ the positive/negative part of the grading.
Then $\mf n_1^-[\varpi] \sse (\mc L\mf g)_{> 0}$:
write therefore
\begin{equation}
	\label{eq:graded_components}
	\ul{\mc A}_{s-1}
	= \sum_{j \geq 0} \ul A_j \varpi^{-s} \dif \varpi,
	\quad \bm Y'
	= \sum_{j > 0} Y'_j \varpi^{-s} \dif \varpi,
	\qquad \ul A_j,Y'_j \in (\mc L\mf g)_j,
\end{equation}
by regarding as usual $\mf g_s^{\dual}$ as a quotient of the subspace $\varpi^{-s} \mf g [\varpi] \dif \varpi \sse \mc L\mf g(\varpi^{-s} \dif \varpi)$.
Now~\eqref{eq:adjoint_expansion}--\eqref{eq:graded_components} yield the following equalities \emph{modulo} $\varpi^s \mf g [\varpi]$:
\begin{equation}
	\label{eq:adjoint_expansion_2}
	Y'_j = \sum_{k > 0} \frac 1{k!} \sum_{\substack{i_1,\dc,i_k > 0, \\ i_1 + \dm + i_k \leq j}} \ad_{X_{i_1}} \dm \ad_{X_{i_k}} \bigl( \ul A_{j-i_1-\dm-i_k} \bigr),
	\qquad j > 0,
\end{equation}
writing also $\bm X = \sum_{i > 0} X_i$,
for suitable $X_i \in (\mc L\mf g)_i$.
To conclude,
observe that the right-hand side of~\eqref{eq:adjoint_expansion_2} has the form $[X_j,\ul A_0] + R_j$,
with initial condition $R_1 = 0$,
where $R_j \in (\mc L\mf g)_j$ only depends on $\set{ \ul A_i,X_i | i < j }$,
for all $j > 0$.
Moreover,
since by construction $\ul A_0 = A'_{s-1}$ is the leading coefficient of $\mc A'$,
it follows that $\ad_{\ul A_0} \cl \mf g \to \mf g$ restricts to a linear automorphism of $\mf g_k$ for any integer $k$,
whence a linear automorphism of $(\mc L\mf g)_j$ for any integer $j$.
Thus,
the equations~\eqref{eq:adjoint_expansion_2} \emph{uniquely} determine the elements $X_i$,
modulo $\varpi^s \mf g[\varpi]$.
Furthermore,
one can inductively prove that
\begin{equation}
	X_i \in \mf n^-_1 [\varpi] + \varpi^s \mf g[\varpi],
	\qquad i > 0,
\end{equation}
so that indeed $\bm u^- \in \wt N^-_1$.
Finally,
the $L_1$-equivariance follows from the above explicit construction of the map~\eqref{eq:algebraic_iso}---%
and its inverse.

\subsection{Proof of Lem.~\ref{lem:closed_subgroup}}
\label{proof:lem_closed_subgroup}

If $s = 1$ the statement is trivial;
choose thus $s > 1$.

By hypothesis $A_{s-1} = A'_{s-1} \in \mf t$,
because the Birkhoff action cannot modify the leading coefficient.
Moreover,
by~\cite[Prop.~9.3.2]{babbitt_varadarajan_1983_formal_reduction_theory_of_meromorphic_differential_equations_a_group_theoretic_view},
there exists $\bm h \in H_s \cap \on{Bir}_s \sse G_s$ such that the coefficients of $\ul{\mc A} \ceqq \Ad_{\bm h}^{\dual} \bigl( \mc A \bigr)$ commute with $A_{s-1} = A'_{s-1}$.
Up to replacing $\mc A$ by $\ul{\mc A}$,
we may then assume that $A_0,\dc,A_{s-1}$ commute with $A'_{s-1}$.

Then start again under this assumption,
and choose an element
\begin{equation}
	\label{eq:birkhoff_element}
	\bm h = e^{\bm X} \in \on{Bir}_s,
	\qquad \bm X = \sum_{j = 1}^{s-1} X_j \varpi^{-j},
	\quad X_j \in \mf g,
\end{equation}
such that $\Ad_{\bm h}^{\dual} \bigl( \mc A \bigr) = \mc A' \in \mf g_s^{\dual}$.
Comparing coefficients yields the equalities
\begin{equation}
	\label{eq:comparing_coefficients}
	A_{s-i}
	= \sum_{j = 1}^i \sum_{n > 0} \frac 1{n!} \cdot \sum_{\substack{i_1,\dc,i_n > 0 \\ i_1 + \dm + i_n = i-j}} \ad_{X_{i_1}} \dm \ad_{X_{i_n}} (\mc A'_{s-j}),
	\qquad i \in \set{1,\dc,s}.
\end{equation}
Introduce as usual the stabilizers $L_1 \sse G$ and $\mf l_1 = \Lie \bigl( L_1 \bigr)$ of $A'_{s-1}$.
Then we inductively show that $X_k \in \mf l_1$,
in the notation of~\eqref{eq:birkhoff_element},
for $k \in \set{1,\dc,s-1}$.
For the base,
if $i \ceqq 2$ then~\eqref{eq:comparing_coefficients} reads
\begin{equation}
	A_{s-2}
	= A'_{s-2} + \bigl[ X_1,A'_{s-1} \bigr] \in \mf g.
\end{equation}
But $\mf g = \mf l_1 \ops \ad_{A'_{s-1}}(\mf g)$ (as $A'_{s-1}$ is semisimple),
and  $A_{s-2} \in \mf l_1$ (by assumption),
so that $\bigl[ X_1,A'_{s-1} \bigr] = 0$.
For the inductive step,
suppose that $X_1,\dc,X_{k-1} \in \mf l_1$,
Taking $i \ceqq k+1$ in~\eqref{eq:comparing_coefficients},
the right-hand side lies in the affine subspace $\bigl[ X_k,A'_{s-1} \bigr] + \mf l_1 \sse \mf g$.
Since $A_{s-k-1} \in \mf l_1$,
we find that $\bigl[ X_k,A'_{s-1} \bigr] = 0$.

Now consider the principal part $\mc A_{s-1} \ceqq \mc A - A_{s-1} \varpi^{-s} \dif \varpi$,
of pole order $s - 1$,
in the notation of~\eqref{eq:subleading_term}.
The above implies that
\begin{equation}
	\mc A_{s-1}
	= \Ad_{\bm h}^{\dual} \bigl( \mc A' \bigr) - A'_{s-1} \varpi^{-s} \dif \varpi
	= \Ad_{\bm h}^{\dual} \bigl( \mc A'_{s-1} \bigr),
	\qquad \mc A'_{s-1}
	\ceqq \mc A' - A'_{s-1} \varpi^{-s} \dif \varpi.
\end{equation}
Thus:
(i) $\mc A_{s-1}$ lies in the $\Ad_{\on{Bir}_{s-1}}^{\dual}$-orbit of $\mc A'_{s-1}$;
and (ii) $\mc A_{s-1}$ takes coefficients in $\mf h \cap \mf l_1 \sse \mf l_1$.
One now concludes recursively,
on the pole order.

\backmatter

\bibliographystyle{amsplain}
\bibliography{/home/gabriele/Desktop/bibliography_macros/bibliography}

\end{document}